\newtheorem{Thm}{Theorem}[section]
\newtheorem{theorem}[Thm]{Theorem}
\newtheorem{lemma}[Thm]{Lemma}
\newtheorem{proposition}[Thm]{Proposition}
\newtheorem{example}[Thm]{Example}
\newtheorem{definition}[Thm]{Definition}
\newcommand{\Z}{\mathbb{Z}}
\newcommand{\kk}{\mathbf{k}}
\newcommand{\mbf}{\mathbf}
\newcommand{\mc}[1]{\mathcal{#1}}
\newcommand{\tb}[1]{\textbf{#1}}
\newcommand{\Osq}{\mathbin{\text{\scalebox{.84}{$\square$}}}}
\newcommand{\TT}{\mathcal{T}(\mathbf{q})}
\title{Poset Hopf Monoids}
\author{Mario Sanchez}
\date{May 27, 2020}
\thanks{University of California, Berkeley: mario\_sanchez@berkeley.edu. \newline The author was supported by the NSF Graduate Fellowship DGE 1752814.}
\begin{document}
\maketitle
\begin{abstract}{

We initiate the study of a large class of species monoids and comonoids which come equipped with a poset structure that is compatible with the multiplication and comultiplication maps. We show that if a monoid and a comonoid are related through a Galois connection, then they are dual to each other. This duality is best understood by introducing a new basis constructed through M\"obius inversion. We use this new basis to give uniform proofs for cofreeness and calculations of primitives for the Hopf monoids of set partitions, graphs, hypergraphs, and simplicial complexes.

Further, we show that the monoid and comonoid of a Hopf monoid are related through a Galois connection if and only if the Hopf monoid is linearized, commutative, and cocommutative. In these cases, we give a grouping-free formula for the antipode in terms of an evaluation of the characteristic polynomial of a related poset. This gives new proofs for the antipodes of the Hopf monoids of graphs, hypergraphs, set partitions, and simplicial complexes.
}
\end{abstract}
%\tableofcontents
\section{Introduction}

In the last few years there has been much work on the combinatorics of Hopf algebras. These algebraic objects abstract the notion of a family of objects with a "merging" operation and a "breaking" operation.  Many families of combinatorial objects have been realized as Hopf algebras including graphs, symmetric function families, generalized permutahedra \cite{GPHopf}, simplicial complexes \cite{Simp}, pipe dreams \cite{HopfDreams}, representations of towers of groups \cite{zelevinsky2006representations}, etc. 

 There are many examples of Hopf algebras where there is a natural poset structure on a basis which respects the algebraic structure. In these cases, it is useful to construct a new basis using M\"obius inversion. A classical example of this is quasisymmetric function theory where we have two bases, the monomial basis $M_{\alpha}$ and the fundamental basis $F_{\alpha}$ which are related to each other by M\"obius inversion. In a different context, Crapo and Schmitt \cite{PrimMatroids} define a new basis for the Hopf algebra of matroids using M\"obius inversion in the weak order poset of matroids. Using this new basis, the authors give a new description for the space of primitives and a new proof of cofreeness.

These commonalities led to the following comment by Crapo and Schimitt \cite{PrimMatroids}.

\begin{quote}
    Many of the Hopf algebras now of central importance in algebraic combinatorics share certain striking features, suggesting the existence of a natural, yet-to-be-identified, class of combinatorial Hopf algebras. These Hopf algebras are graded and cofree, each has a canonical basis consisting of, or indexed by, a family of (equivalence classes of) combinatorial objects that is equipped with a natural partial ordering, and in each case the algebraic structure is most clearly understood through the introduction of a second basis, related to the canonical one by Mobius inversion over the partial ordering.
\end{quote}

This suggests two questions. First, what is this class of combinatorial Hopf algebras? Namely, what compatibility condition between the poset on the basis and the Hopf algebra is present in all of these examples? Second, why is defining a new basis using M\"obius inversion useful in the study of the algebraic structure? In this paper, we answer both of these questions as well as giving new examples of this phenomenon. 

To resolve the first question, it is not sufficient to restrict ourselves to the category of Hopf algebras. It is difficult to relate the poset structure of the basis to the multiplication and comultiplication maps. This is because these maps typically output linear combinations of basis elements, which forces us to extend the poset of the basis to a poset of linear combinations; however, it is not clear how to do this. To bypass this problem, we extend Joyal's theory of species to the category of posets and initiate the study of poset monoids, poset comonoids, and poset Hopf monoids. 

Define a \textbf{poset species} $\mbf{F}$ to be an assignment of a poset $\mbf F[I]$ to each finite set $I$ subject to some relabelling axioms. Then a \textbf{poset monoid} is a species $\mbf{F}$ alongside an order-preserving multiplication map $m_{S,T}: \mbf{F}[S] \times \mbf{F}[T] \to \mbf{F}[S \sqcup T]$ and a \textbf{poset comonoid} is a species $\mbf{F}$ alongside an order-preserving comultiplication map $\Delta_{S,T}: \mbf{F}[S \sqcup T] \to \mbf{F}[S] \times \mbf{F}[T]$ both satisfying some additional properties. Framing the examples that Crapo and Schmitt studied in this context, reveals that in all of these examples, there is a poset monoid $(\mbf{F}, \Box)$ and a poset comonoid $(\mbf{F},\Delta)$ on the same poset species where $m_{S,T}$ and $\Delta_{S,T}$ form a Galois connection. That is,
 \[ \Delta_{S,T}(x) \leq (y,z) \iff x \leq y \Osq_{S,T}z.\]
We call this pair $(\mbf F,\Delta)$ and $(\mbf F, \Box)$ an \textbf{adjoint pair}.

We can linearize a poset species $\mbf{F}$ to obtain a vector space $\kk \mbf{F}[I] = \text{span}(\{x \in \mbf{F}[I]\})$. This is equipped with the linearized basis $\{x \in \mbf{F}[I]\}$. Define the \textbf{inverted basis} of $\kk \mbf{F}[I]$ as the collection of elements
 \[ \omega_{x} = \sum_{x \leq y} \mu(x,y) \; y\]
for $x,y \in \mbf{F}[I]$, where $\mu(-,-)$ is the M\"obius function of the poset $\mbf{F}[I]$. Then, the answer to the second question is given by the following theorem.
\begin{theorem} Let $(\mbf{F}, \Delta)$ be a poset comonoid and $(\mbf{F}, \Box)$ be a poset monoid that form an adjoint pair, then
 \[ (\kk \mbf{F}, \Box) \cong (\kk \mbf{F}^*, \Delta^*),\]
with explicit isomorphism $x \mapsto \omega_x^*$.
\end{theorem}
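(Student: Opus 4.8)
The plan is to show that the stated map $\phi\colon x\mapsto\omega_x^*$ is an isomorphism of monoids one finite set at a time, and then to observe that it is natural in the label set. Since the inverted basis $\{\omega_x\}$ is obtained from the linearized basis by a unitriangular change of basis, it is a basis of $\kk\mbf{F}[I]$, so the dual family $\{\omega_x^*\}$ is a basis of $\kk\mbf{F}[I]^*$ and $\phi$ is automatically a linear isomorphism on each component. The real work is to check that $\phi$ intertwines the product $\Box$ with the dual product $\Delta^*$, and the conceptual heart of that will be a single application of the Galois connection.

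First I would record the dual of the inverted basis. By M\"obius inversion the defining relation $\omega_x=\sum_{x\le y}\mu(x,y)\,y$ inverts to $y=\sum_{z\ge y}\omega_z$, so for any $x,y\in\mbf{F}[I]$ one computes $\langle\omega_x^*,y\rangle=\langle\omega_x^*,\sum_{z\ge y}\omega_z\rangle=[\,y\le x\,]$, giving
\[ \omega_x^*=\sum_{y\le x} y^*. \]
Next I would compute the dual product on the linearized basis: because $\Delta_{S,T}$ sends a basis element $w\in\mbf{F}[S\sqcup T]$ to the single pair $\Delta_{S,T}(w)\in\mbf{F}[S]\times\mbf{F}[T]$, its adjoint acts on dual basis elements by
\[ \Delta^*_{S,T}(a^*\otimes b^*)=\sum_{w:\,\Delta_{S,T}(w)=(a,b)} w^*. \]

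With these two formulas in hand the computation is immediate. Expanding the product of inverted dual basis elements and combining the displays above yields
\[ \Delta^*_{S,T}(\omega_x^*\otimes\omega_y^*)=\sum_{a\le x,\ b\le y}\ \sum_{w:\,\Delta(w)=(a,b)} w^*=\sum_{w:\,\Delta_{S,T}(w)\le(x,y)} w^*, \]
where the last comparison is taken in the product poset $\mbf{F}[S]\times\mbf{F}[T]$. This is exactly where the adjoint-pair hypothesis enters: the Galois connection $\Delta_{S,T}(w)\le(x,y)\iff w\le x\Osq_{S,T}y$ identifies the index set above with $\{w: w\le x\Osq_{S,T}y\}$, so the sum equals $\sum_{w\le x\Osq_{S,T}y} w^*=\omega_{x\Osq_{S,T}y}^*$. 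Since the product $\Box$ on the linearized basis is $m_{S,T}(x,y)=x\Osq_{S,T}y$, we conclude $\phi(x\Box y)=\omega_{x\Osq_{S,T}y}^*=\Delta^*_{S,T}(\phi(x)\otimes\phi(y))$, which is precisely the required compatibility.

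Finally I would dispatch the routine remainder: compatibility with units, by tracking the distinguished element of $\mbf{F}[\emptyset]$ through $\phi$ and noting the unit of $(\kk\mbf{F}^*,\Delta^*)$ is dual to the counit; and naturality in $I$, which holds because a relabeling bijection induces a poset isomorphism, and such an isomorphism preserves the M\"obius function — hence commutes with the formation of $\omega_x$ and of the dual basis — while being compatible with $\Delta$ and $\Box$ by the very definition of a poset monoid and comonoid. The genuinely substantive content is the one line equating the two summation ranges via the Galois connection; I expect the only place to stumble is the bookkeeping, namely getting the two M\"obius/zeta inversions in the correct direction so that $\omega_x^*=\sum_{y\le x}y^*$ (and not a sum over $y\ge x$) and computing the adjoint $\Delta^*$ on the right side of the pairing.
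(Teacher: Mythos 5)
Your proof is correct, and it takes a genuinely different --- essentially dual --- route to the paper's. The paper argues downstairs: it first proves Theorem~\ref{thm: inverted basis}, the formula $\Delta_{S,T}(\omega_x)=\sum_{x_1\Osq_{S,T}x_2=x}\omega_{x_1}\otimes\omega_{x_2}$, whose proof rests on Rota's M\"obius identity for Galois connections (Lemma~\ref{lemma:Gal}); combining this with the zeta-form observation $\langle\Delta_{S,T}(x),y\otimes z\rangle=\langle x,y\Osq_{S,T}z\rangle$ and dualizing shows that $\Delta^*$ acts on $\{\omega_x^*\}$ exactly as $\Box$ acts on the linearized basis. You instead work entirely upstairs in the dual: the triangular inversion $\omega_x^*=\sum_{y\le x}y^*$, the fact that $\Delta^*(a^*\otimes b^*)=\sum_{w\,:\,\Delta_{S,T}(w)=(a,b)}w^*$ because the comultiplication is linearized, and then a single use of the Galois connection as an equality of index sets, $\{w:\Delta_{S,T}(w)\le(x,y)\}=\{w:w\le x\Osq_{S,T}y\}$. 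Both arguments pivot on the same identity $\Delta^*_{S,T}(\omega_x^*\otimes\omega_y^*)=\omega^*_{x\Osq_{S,T}y}$, but yours derives it with no M\"obius-function bookkeeping beyond the inversion defining the basis: Rota's lemma never appears, because on the zeta side the Galois connection is literally a bijection of summation ranges. What the paper's ordering buys is that Theorem~\ref{thm: inverted basis} stands as an independent structural fact that is reused later (for the primitives in Theorem~\ref{thm:prim} and the antipode in Theorem~\ref{thm:antipode}); what yours buys is a shorter, self-contained verification of the duality statement itself --- indeed, dualizing your identity recovers Theorem~\ref{thm: inverted basis} for free. Your directional bookkeeping is also right: $\omega_x^*=\sum_{y\le x}y^*$ rather than a sum over $y\ge x$, and you use the connection in the direction $\Delta\dashv\Box$, matching the paper's convention for adjoint pairs; your remarks on units and naturality dispatch the routine remainder correctly.
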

In other words, if a poset monoid and a poset comonoid form an adjoint pair, then their linearizations are dual. Further, the linearized basis of the monoid and the inverted basis of the comonoid are dual bases. This means that any property of the linearized basis of the monoid can be dualized into the dual property of the inverted basis of the comonoid. Applying this to the indecomposables of the monoid gives the following description of the primitives of the comonoid.

\begin{theorem}Let $(\mbf{F},\Delta)$ and $(\mbf{F},\Box)$ be a monoid and a comonoid that form an adjoint pair. An inverted basis element $\omega_x \in \kk \mbf{F}[I]$ is primitive if and only if $x$ is indecomposable in the monoid $(\mbf{F}, \Box)$. Further, the set $\{\omega_x \lvert \; \text{$x \in \mbf{H}[I]$ and $x$ is $\Box$-indecomposable}\}$ is a basis for the space of primitives of $\kk\mbf{F}[I]$.
 
\end{theorem}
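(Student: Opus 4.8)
The plan is to deduce the statement from the preceding theorem together with the standard coalgebra duality identifying the primitives of a comonoid with the annihilator of the decomposables of its dual monoid. Throughout I fix a finite set $I$, use that each $\mbf{F}[I]$ is finite so that the pairing between $\kk\mbf{F}[I]$ and $\kk\mbf{F}[I]^*$ is perfect, and recall that $\{\omega_x\}$ and $\{\omega_x^*\}$ are dual bases by construction of the inverted basis.

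First I would record the adjunction computation. For a proper decomposition $I = S \sqcup T$ and functionals $f \in \kk\mbf{F}[S]^*$, $g \in \kk\mbf{F}[T]^*$, the product in $(\kk\mbf{F}^*, \Delta^*)$ satisfies $\langle p, \Delta_{S,T}^*(f \otimes g)\rangle = \langle \Delta_{S,T}(p), f \otimes g\rangle$ for every $p \in \kk\mbf{F}[I]$. Hence $p$ is primitive, i.e. $\Delta_{S,T}(p) = 0$ for all nonempty $S,T$, if and only if $p$ annihilates every product $f \cdot g$ arising from proper decompositions. Writing $D \subseteq \kk\mbf{F}[I]^*$ for the span of all such products — the space of decomposables of $(\kk\mbf{F}^*, \Delta^*)$ — this says precisely that the space of primitives $P \subseteq \kk\mbf{F}[I]$ equals the annihilator $D^\perp$.

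Next I would transport $D$ through the isomorphism of the preceding theorem, which is a monoid isomorphism $\phi : (\kk\mbf{F}, \Box) \to (\kk\mbf{F}^*, \Delta^*)$ with $\phi(x) = \omega_x^*$, so that $\phi$ carries decomposables to decomposables. Because the poset-monoid product $\Box_{S,T}$ is a map of sets, the linearized product of two basis elements is again a single basis element; consequently the space of decomposables of $(\kk\mbf{F}, \Box)$ is spanned by the decomposable basis elements $w$ — those with $w = y \Box_{S,T} z$ for some proper $S,T$. Applying $\phi$ gives $D = \operatorname{span}\{\omega_w^* : w \text{ is } \Box\text{-decomposable}\}$. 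Pairing against the dual basis now finishes the equivalence: since $\langle \omega_x, \omega_w^*\rangle = \delta_{x,w}$, the element $\omega_x$ lies in $D^\perp = P$ if and only if $x$ is not $\Box$-decomposable, that is, if and only if $x$ is $\Box$-indecomposable.

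Finally, for the basis claim I would count dimensions. As $\{\omega_w^*\}_w$ is a basis, $\dim D$ equals the number of $\Box$-decomposable basis elements, so $\dim P = \dim \kk\mbf{F}[I] - \dim D$ is the number of $\Box$-indecomposable basis elements. The inverted basis elements $\{\omega_x : x \text{ is } \Box\text{-indecomposable}\}$ are exactly that many vectors, they are linearly independent as a subset of the basis $\{\omega_x\}$, and each is primitive by the previous paragraph; hence they form a basis of $P$. The main obstacle I anticipate is bookkeeping rather than conceptual: one must verify carefully that ``decomposable'' in the linearized monoid is genuinely spanned by decomposable basis elements (this is where the set-theoretic nature of $\Box$ is essential) and handle the connectedness conventions so that only proper decompositions $S, T \neq \emptyset$ enter, with the degenerate cases $|I| \le 1$ treated separately, where every element is simultaneously primitive and indecomposable.
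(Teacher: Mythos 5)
Your argument is correct, and it reaches the result by a genuinely different, more structural route than the paper. The paper's proof of Theorem \ref{thm:prim} is a self-contained computation with the zeta bilinear form: the Galois connection gives $\langle \Delta_{S,T}(\omega_x), y \otimes z\rangle = \langle \omega_x, y \Osq_{S,T} z\rangle$, the M\"obius identity $\langle \omega_x, y\rangle = \sum_{x\le z\le y}\mu(x,z) = \delta_{x,y}$ shows the right-hand side vanishes for all basis elements $y,z$ exactly when $x$ is $\Box$-indecomposable, and non-degeneracy of $\zeta$ shows the left-hand side vanishes for all $y,z$ exactly when $\omega_x$ is primitive. You instead quote Theorem \ref{thm:duality} as a black box, identify the primitives of $(\kk\mbf{F},\Delta)$ with the annihilator of the decomposables of the dual monoid $(\kk\mbf{F}^*,\Delta^*)$, transport the decomposables through the isomorphism $x\mapsto\omega_x^*$ (using, correctly, that $\Box$ is a linearized set map, so the decomposables of $(\kk\mbf{F},\Box)$ are spanned by decomposable basis elements), and pair against the dual basis. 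The two proofs rest on the same underlying duality: the zeta form is precisely the bilinear form realizing the isomorphism of Theorem \ref{thm:duality}, and your dual-basis identity $\langle\omega_x,\omega_w^*\rangle=\delta_{x,w}$ is the paper's $\langle\omega_x,y\rangle=\delta_{x,y}$ in disguise. What each buys: your dimension count gives a clean, explicit proof of the final ``basis'' assertion, which the paper compresses into ``This implies the result''; the paper's computation never passes to dual spaces, so it does not need perfectness of the evaluation pairing (finiteness of $\mbf{F}[I]$) as an explicit hypothesis---though that finiteness is already implicit in Theorem \ref{thm:duality}, on which your argument leans. Your closing caveats (only non-trivial decompositions $S,T\neq\emptyset$ count in ``decomposable'' and ``primitive,'' and the cases $|I|\le 1$ are degenerate) apply equally to the paper's conventions, which handle them silently.
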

\noindent
Applying it to freeness and cofreeness gives the following.

\begin{theorem} Let $(\mbf{C},\Delta)$ be a poset comonoid and $(\mbf{C},\Box)$ be a commutative poset monoid that form an adjoint pair. If every element in  $\mbf{C}$ has a unique $\Box$-factorization, then $(\kk\mbf{C}, \Delta)$ is cofree.
\end{theorem}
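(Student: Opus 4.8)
The plan is to prove the dual statement—that the monoid $(\kk\mbf{C},\Box)$ is free—and then transport freeness across the duality of the first theorem to obtain cofreeness of $(\kk\mbf{C},\Delta)$. Since every poset $\mbf{C}[I]$ is finite, each space $\kk\mbf{C}[I]$ is finite dimensional, so graded duality is an involution interchanging monoids and comonoids. The first theorem gives $(\kk\mbf{C},\Box)\cong(\kk\mbf{C}^*,\Delta^*)$; dualizing this isomorphism identifies the comonoid $(\kk\mbf{C},\Delta)$ with the graded dual of the monoid $(\kk\mbf{C},\Box)$. Because $\Box$ is commutative the resulting comonoid is cocommutative, and the graded dual of a \emph{free commutative} monoid is exactly a \emph{cofree} (cocommutative) comonoid. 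Hence it suffices to show that $(\kk\mbf{C},\Box)$ is free as a commutative monoid.

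To establish freeness, let $\mbf{q}$ be the positive species whose component $\mbf{q}[I]$ is spanned by the $\Box$-indecomposable elements of $\mbf{C}[I]$. The free commutative monoid on $\mbf{q}$ has, for each $I$, a basis indexed by pairs $(\pi,(x_B)_{B\in\pi})$ where $\pi$ is a set partition of $I$ and each $x_B\in\mbf{q}[B]$ is indecomposable; define a map to $\kk\mbf{C}$ by sending such a pair to the product $\Box_{B\in\pi} x_B\in\mbf{C}[I]$. Associativity and commutativity of $\Box$ make this a well-defined morphism of monoids, and it carries the basis of the free monoid onto the collection of $\Box$-factorizations in $\mbf{C}$. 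The hypothesis that every element of $\mbf{C}$ has a \emph{unique} $\Box$-factorization says precisely that this map is a bijection on bases: existence of a factorization yields surjectivity and uniqueness yields injectivity. A morphism of monoids that restricts to a bijection between bases is an isomorphism, so $(\kk\mbf{C},\Box)$ is the free commutative monoid on $\mbf{q}$.

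The main obstacle is making the first step precise rather than the second: one must pin down the definition of a cofree comonoid in the category of species and check that graded duality genuinely interchanges the free-commutative and cofree-cocommutative structures, with the cogenerating species of the cofree comonoid being the dual of $\mbf{q}$. This is consistent with the second theorem, which identifies the primitives of $(\kk\mbf{C},\Delta)$ with the inverted basis elements $\omega_x$ for $x$ indecomposable—exactly the duals of the generators $\mbf{q}$. Once finite-dimensionality of each $\mbf{C}[I]$ and the definitions are in place, this interchange is formal, and the real content lies in the unique-factorization hypothesis, which is engineered precisely to match the basis of a free commutative monoid.
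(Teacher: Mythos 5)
Your proposal is correct and follows essentially the same route as the paper: the paper's proof also combines the duality theorem (the adjoint pair makes $(\kk\mbf{C},\Box)$ the dual of $(\kk\mbf{C},\Delta)$) with the observations that unique factorization makes the monoid free and that a comonoid is cofree if and only if its dual monoid is free. You merely spell out the two steps the paper leaves implicit, namely the explicit isomorphism of $(\kk\mbf{C},\Box)$ with the free commutative monoid $\mathcal{S}(\mbf{q})$ on the indecomposables and the (finite-dimensionality) bookkeeping needed to transport freeness across the duality.
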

With these theorems, we give new descriptions of the primitives of graphs and set partitions as well as the first calculations of the primitives of hypergraphs and simplcial complexes. We also give new proofs of the cofreeness of these examples.

We obtain stronger results by focusing on the situation where the adjoint pair $(\mbf{F}, \Delta)$ and $(\mbf{F}, \Box)$ also form a Hopf algebra $(\mbf{F}, \Box, \Delta)$. We call these \textbf{self-adjoint Hopf monoids}. These poset Hopf monoids admit a, possibly different, poset structure $\leq_r$ called the \textbf{reassembly poset}, first studied by Marberg \cite{Marberg}, given by $x \leq y$ if and only if $y = m_{S_1,\cdots,S_k} \circ \Delta_{S_1,\cdots,S_k}(x)$ for some ordered set partition $S_1 \sqcup \cdots \sqcup S_k = I$. We give the following grouping-free formula for their antipodes.

\begin{theorem} Let $\mbf{H}$ be a self-adjoint Hopf monoid and $\leq_r$ be the reassembly poset. Let $[x,y]$ denote the interval from $x$ to $y$ in the reassembly poset. Let $p_{[x,y]}(t)$ be the characteristic polynomial of this interval. Then, the antipode for $\mbf{H}$ is given by
 \[ S_I(x) = \sum_{x \leq_r y} p_{[x,y]}(-1) \; y\]
\end{theorem}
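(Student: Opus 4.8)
The plan is to begin from Takeuchi's formula, which holds for any connected Hopf monoid: on $\mbf{H}[I]$ the antipode is the signed sum over set compositions (ordered set partitions) $\sigma = (S_1,\dots,S_k) \models I$,
\[ S_I(x) = \sum_{\sigma \models I} (-1)^{\ell(\sigma)}\, m_{S_1,\dots,S_k}\circ \Delta_{S_1,\dots,S_k}(x), \]
where $\ell(\sigma)=k$. Since $\mbf{H}$ is self-adjoint it is linearized, commutative, and cocommutative (by the characterization quoted earlier), so each iterated term $m_{S_1,\dots,S_k}\circ\Delta_{S_1,\dots,S_k}(x)$ is a single basis element; commutativity together with cocommutativity shows this element depends only on the underlying \emph{unordered} partition $\pi=\{S_1,\dots,S_k\}$, which I will write $r_\pi(x)$, and by definition $r_\pi(x)\geq_r x$. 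Collecting the Takeuchi sum by its output basis element $y$ then gives $S_I(x)=\sum_{y\geq_r x} c_y\, y$ with
\[ c_y=\sum_{\substack{\sigma \models I\\ r_{\sigma}(x)=y}}(-1)^{\ell(\sigma)}=\sum_{\substack{\pi \vdash I\\ r_\pi(x)=y}}(-1)^{|\pi|}\,|\pi|!, \]
the last equality because each unordered $\pi$ with $|\pi|=k$ arises from exactly $k!$ set compositions. This step is routine; the entire content of the theorem is the identity $c_y=p_{[x,y]}(-1)$.

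To prove $c_y = p_{[x,y]}(-1)$, I would first analyze the fiber $\{\pi\vdash I: r_\pi(x)=y\}$. The aim is to show that these are exactly the \emph{proper} (independent) partitions of a contracted object built from $x$ and $y$ --- the abstract analogue of the contraction $x/y$ and its bond lattice for graphs --- and that, under this identification, the reassembly interval $[x,y]_r$ is isomorphic (as a poset, up to order reversal) to the lattice of such partitions. Granting this, the signed weighted count becomes a chromatic-type polynomial: setting
\[ p_{[x,y]}(t)=\sum_{\substack{\pi\vdash I\\ r_\pi(x)=y}} t^{\underline{|\pi|}}, \]
where $t^{\underline{k}}=t(t-1)\cdots(t-k+1)$ is the falling factorial, a Whitney-type expansion identifies this polynomial with the characteristic polynomial of the interval $[x,y]_r$ (computed with the number-of-blocks grading and the Möbius function $\mu_r$ of the reassembly poset). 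The evaluation at $t=-1$ is then immediate from $(-1)^{\underline{k}}=(-1)^k k!$, which returns exactly $c_y$. I would sanity-check the scheme against graphs, where $r_\pi(x)$ deletes the edges of $x$ crossing $\pi$, the fiber over $y$ is the set of proper partitions of $x/y$, and $p_{[x,y]}$ recovers the chromatic polynomial; the edgeless graph $E_n$, whose only reassembly is itself and for which $c_{E_n}=(-1)^n$, already shows that the grading must count blocks/components rather than the naive rank of the (one-point) interval.

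The hard part is carrying out this Whitney-type identification uniformly across all self-adjoint Hopf monoids, without an explicit combinatorial contraction to lean on. Concretely, I must (i) produce the correct abstract notion of ``contraction'' and show the reassembly fiber over $y$ is its lattice of admissible partitions, (ii) prove that the Möbius function $\mu_r$ of $[x,y]_r$ agrees with the Möbius function of this lattice of partitions, so that the two expressions for $p_{[x,y]}$ coincide, and (iii) fix the grading so that the characteristic polynomial is the chromatic-type polynomial $\sum t^{\underline{|\pi|}}$ rather than the ordinary bottom-anchored one (these differ by a factor $t^{c}$, which is precisely what accounts for the $(-1)^n$ in the edgeless example). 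I expect (ii) to be the most delicate: it requires showing that the reassembly order on basis elements and the refinement order on the fiber partitions induce the same incidence algebra on $[x,y]$, and this is where I would use self-adjointness --- equivalently the Galois-connection duality established earlier --- to control how reassembly interacts with refinement.
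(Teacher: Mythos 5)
Your opening reduction is sound as far as it goes: since a self-adjoint Hopf monoid is linearized, commutative, and cocommutative, each Takeuchi term $m_{S_1,\ldots,S_k}\circ\Delta_{S_1,\ldots,S_k}(x)$ is a single basis element $r_\pi(x)\geq_r x$ depending only on the unordered partition $\pi$, and grouping gives $S_I(x)=\sum_{x\leq_r y}c_y\,y$ with $c_y=\sum_{\pi}(-1)^{|\pi|}\,|\pi|!$ over the fiber $\{\pi : r_\pi(x)=y\}$; your block-counting grading observation (via the edgeless graph) is also correct and matches the paper's convention $p_{[x,y]}(t)=\sum_{x\leq_r z\leq_r y}\mu(x,z)\,t^{\ell(z)}$. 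But at that point the proof stops. The identity $c_y=p_{[x,y]}(-1)$, which you yourself call the entire content of the theorem, is never established: items (i)--(iii) of your plan --- an abstract notion of contraction, the identification of the fiber with a lattice of independent partitions, the matching of the M\"obius function of the reassembly interval with that of the partition lattice, and the grading fix --- constitute a Whitney/Rota-type theorem, and you give no argument for any of them beyond the graph sanity check. In the abstract setting there is no coloring argument or contraction operation to lean on, so this is a genuine gap rather than a routine verification; in effect the proposal restates the theorem as an unproven combinatorial identity about the fibers of $\pi\mapsto r_\pi(x)$, with self-adjointness invoked only as a hope ("this is where I would use... the Galois-connection duality") rather than used.

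For contrast, the paper avoids all fiber analysis by computing the antipode on the inverted basis, and the steps where it does so are exactly where the adjunction does concrete work. The Galois connection $\Delta\dashv m$ gives, via Theorem \ref{thm: inverted basis}, that $\Delta_{S_1,\ldots,S_k}(\omega_x)=\sum\omega_{x_1}\otimes\cdots\otimes\omega_{x_k}$ summed over factorizations $x_1\cdots x_k=x$; property (2) of Marberg's theorem gives multiplicativity $\omega_{x_1}\cdots\omega_{x_k}=\omega_{x_1\cdots x_k}=\omega_x$; and unique factorization (from Stover's theorem) turns the Takeuchi sum into the alternating count of ordered set partitions of the set of irreducible factors of $x$, yielding the eigenvalue formula $S_I(\omega_x)=(-1)^{\ell(x)}\omega_x$. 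A single M\"obius inversion back to the linearized basis then produces the coefficient $\sum_{x\leq_r z\leq_r y}\mu\,(-1)^{\ell(z)}=p_{[x,y]}(-1)$ with no grouping of fibers at all. If you wished to complete your route instead, the missing step would amount to constructing a Galois connection between the partition lattice of the irreducibles of $x$ and the interval $[x,y]_r$ and applying Rota's lemma there --- i.e., re-deriving fiber by fiber the same structure the paper exploits once, globally, through the inverted basis.
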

Applying this to graphs, hypergraphs, simplicial complexes, and set partitions gives a new description of existing results.

Finally, we give the following classification of self-adjoint Hopf monoids.
\begin{theorem} A set Hopf monoid is self-adjoint with respect to some poset structure if and only if it is commutative and cocommutative.
\end{theorem}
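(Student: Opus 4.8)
The statement is an equivalence, and I would prove the two implications by quite different means; the common engine is a pair of identities that hold in \emph{every} connected set bimonoid. Specializing the bimonoid compatibility axiom to the decompositions $(S',T')=(S,T)$ and $(S',T')=(T,S)$---in each case three of the four intersection blocks $S\cap S',\,S\cap T',\,T\cap S',\,T\cap T'$ are empty, so the unit and counit axioms collapse the right-hand side---yields
\[ \Delta_{S,T}\circ m_{S,T}=\mathrm{id} \qquad\text{and}\qquad \Delta_{T,S}\circ m_{S,T}=\beta, \]
where $\beta\colon \mbf H[S]\times\mbf H[T]\to\mbf H[T]\times\mbf H[S]$ is the canonical swap. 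Note that $\beta$ is an order-isomorphism for the product order, a fact I will use freely.

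For the forward implication, suppose $\mbf H$ is self-adjoint for some poset, so that for every decomposition $\Delta_{S,T}$ is the lower adjoint of $\Box_{S,T}=m_{S,T}$. The unit of the adjunction gives $x\le m_{S,T}(\Delta_{S,T}(x))$. Since $(\mbf H,\Delta)$ is a poset comonoid, $\Delta_{T,S}$ is order-preserving, so applying it and using the second identity above gives $\Delta_{T,S}(x)\le \Delta_{T,S}(m_{S,T}(\Delta_{S,T}(x)))=\beta(\Delta_{S,T}(x))$. Running the same argument with the roles of $S$ and $T$ exchanged yields $\Delta_{S,T}(x)\le\beta(\Delta_{T,S}(x))$, and applying the order-isomorphism $\beta$ turns this into the reverse inequality. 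Squeezing,
\[ \beta(\Delta_{S,T}(x))\ \le\ \Delta_{T,S}(x)\ \le\ \beta(\Delta_{S,T}(x)), \]
so $\Delta_{T,S}=\beta\circ\Delta_{S,T}$: the Hopf monoid is cocommutative. Commutativity is then immediate from uniqueness of adjoints: the elements $m_{S,T}(y,z)$ and $m_{T,S}(z,y)$ of $\mbf H[S\sqcup T]$ are characterized by $x\le m_{S,T}(y,z)\iff\Delta_{S,T}(x)\le(y,z)$ and by $x\le m_{T,S}(z,y)\iff\Delta_{T,S}(x)\le(z,y)$ respectively, and cocommutativity makes the two right-hand conditions equivalent, so the two elements have the same down-set and hence coincide.

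For the reverse implication, assume $\mbf H$ is commutative and cocommutative and equip each $\mbf H[I]$ with the reassembly poset $\leq_r$ introduced above. The first task is to check that $\leq_r$ is well defined and genuinely a partial order: commutativity and cocommutativity are exactly what guarantee that $m_{S_1,\dots,S_k}\circ\Delta_{S_1,\dots,S_k}$ is independent of the order of the blocks; reflexivity comes from the trivial partition, transitivity from (co)associativity together with the compatibility axiom, and antisymmetry from a size/grading argument bounding how far a reassembly can break an element. Granting this, I would show that both $m_{S,T}$ and $\Delta_{S,T}$ are order-preserving for $\leq_r$ (again the compatibility axiom lets one push a single split or merge past an arbitrary reassembly, and here commutativity and cocommutativity are used to rearrange blocks). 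The Galois connection then follows formally: if $\Delta_{S,T}(x)\leq_r(y,z)$ then, since $x\leq_r m_{S,T}(\Delta_{S,T}(x))$ by definition and $m_{S,T}$ is order-preserving, $x\leq_r m_{S,T}(y,z)$; conversely if $x\leq_r m_{S,T}(y,z)$ then applying the order-preserving $\Delta_{S,T}$ and using $\Delta_{S,T}\circ m_{S,T}=\mathrm{id}$ gives $\Delta_{S,T}(x)\leq_r(y,z)$. The remaining poset-species and poset-(co)monoid axioms are routine.

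The main obstacle is concentrated in the reverse direction, in showing that $\leq_r$ is a partial order (antisymmetry in particular) and that $m_{S,T},\Delta_{S,T}$ are order-preserving. Once order-preservation is in hand the adjunction is purely formal, and once cocommutativity is in hand in the forward direction commutativity is purely formal; so the real content is the bookkeeping with the compatibility axiom showing that a single split or merge commutes, up to $\leq_r$, with an arbitrary sequence of splits and merges---which is precisely where commutativity and cocommutativity are indispensable.
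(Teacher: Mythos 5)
Your forward implication (self-adjoint $\Rightarrow$ commutative and cocommutative) is correct and takes a genuinely different route from the paper. The paper linearizes: it first proves a Zelevinsky-type splitting $\kk\mbf{H}=\mbf{p}\oplus\mbf{d}$ (primitives plus decomposables) using the zeta bilinear form and the inverted basis, then shows by induction on $|I|$ that every commutator $m_{S,T}(x,y)-m_{T,S}(y,x)$ is simultaneously decomposable and primitive, hence zero, and finally gets cocommutativity from self-duality. Your argument never leaves the category of posets: the two specializations of the compatibility axiom, $\Delta_{S,T}\circ m_{S,T}=\mathrm{id}$ and $\Delta_{T,S}\circ m_{S,T}=\beta$ (both computed correctly, since in each case three of the four intersection blocks are empty and (co)unitality collapses the diagram), combined with the adjunction unit $x\leq m_{S,T}(\Delta_{S,T}(x))$ and order-preservation of $\Delta$, give the squeeze proving cocommutativity, after which $m_{S,T}(y,z)$ and $m_{T,S}(z,y)$ have the same principal down-set and must coincide. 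This is more elementary than the paper's proof (no base field, no M\"obius inversion, no induction) and works directly at the set level. One small omission: the paper's definition of self-adjoint permits either $\Delta\dashv m$ or $m\dashv\Delta$; you treat only the first case, though the second reduces to it by reversing the poset order, as the paper itself remarks.

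Your reverse implication follows the paper's route (equip $\mbf{H}$ with the reassembly poset and verify the adjunction), and your formal derivation of the Galois connection from order-preservation together with $x\leq_r m_{S,T}\Delta_{S,T}(x)$ and $\Delta_{S,T}\circ m_{S,T}=\mathrm{id}$ is correct--indeed cleaner than the paper, which simply quotes this equivalence as property (1) of Marberg's theorem. The genuine gap is antisymmetry of $\leq_r$, which you dispose of ``by a size/grading argument'': a general commutative cocommutative set Hopf monoid carries no a priori grading on the elements of $\mbf{H}[I]$ that reassembly strictly increases. The grading $\ell(x)$ by number of irreducible factors, which the paper uses later, only becomes available after unique factorization is established, and the paper's unique factorization argument already presupposes the poset structure (via Stover's theorem and the basis of primitives $\omega_x$), so that route is circular. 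The repair is an absorption identity rather than a grading. For an unordered partition $P=\{P_1,\ldots,P_k\}$ write $r_P=m_{P_1,\ldots,P_k}\circ\Delta_{P_1,\ldots,P_k}$ (well defined by (co)commutativity). Iterated compatibility, (co)associativity, and (co)commutativity give two facts: first, $r_Q\circ r_P=r_{P\wedge Q}$ where $P\wedge Q$ is the common refinement (this is the sharp form of your transitivity); second, if $R$ refines $P$ then $r_P\circ r_R=r_R$, because each block of $R$ meets only one block of $P$, so $\Delta_P$ merely regroups the factors of $r_R$ and $m_P$ reassembles them. Now if $x\leq_r y$ via $P$ and $y\leq_r x$ via $Q$, the first fact gives $x=r_{P\wedge Q}(x)$, and since $P\wedge Q$ refines $P$ the second fact gives $y=r_P(x)=r_P(r_{P\wedge Q}(x))=r_{P\wedge Q}(x)=x$. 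This is in substance Marberg's argument, which the paper cites rather than reproduces; with it your proof is complete.
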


The structure of the paper is as follows. In Section 2, we extend Joyal's theory of species to the category of posets as well as defining poset Hopf monoids. In Section 3, we study general results of adjoint pairs $(\mbf{F},\Delta)$ and $(\mbf{F},\Box)$, such as finding a basis for the primitives of the comonoid in terms of the monoid and giving a criteria for when the comonoid is cofree in terms of $\Box$. We also show how to use the Fock functor to translate our results to the category of combinatorial Hopf algebras. In Section 4, we study self-adjoint Hopf monoids. We study properties of the reassembly poset and use it to give a classification as the Hopf monoids which are linearized, commutative, and cocommutative. We finish the section by giving the formula for the antipode in terms of a characteristic polynomial. In Section 5, we apply our theory to the examples of set partitions, simplicial complexes, and hypergraphs.
\section{Poset Hopf Monoids}\label{sec:Poset Hopf Monoids}

One of the major obstacles in trying to combine Hopf algebras with posets is that while many Hopf algebras have a basis with a natural poset structure, it is not clear what it means for the multiplication and comultiplication maps to be order-preserving. This is because these two operations generally output linear combinations of basis elements. Instead of trying to extend the poset structure to these linear combinations, we take a different route and define the notion of poset Hopf monoids, which allows us to bypass this problem.

%%%%%%%%%%%%%%%%%%%%%%%%%%%%%%%%%%%%%%%%%%%%%%%%%%
%%%%%%%%%%%%%%%%%%%%%%%%%%%%%%%%%%%%%%%%%%%%%%%%%%
%%%%%%%%%%%%%%%%%%%%%%%%%%%%%%%%%%%%%%%%%%%%%%%%%%

\subsection{Poset Background} 
We focus on posets that are locally finite, that is, given $x,z$ in the poset $P$, the interval $[x,z] = \{x \leq y \leq z \; \lvert \; y \in P\}$ is finite. The \textbf{M\"obius function} $\mu$ of a poset $P$ is a map from intervals of $P$ to $\Z$ defined recursively by 
\[ \mu_P(x,y) = \left \{ \begin{array}{ll}
                1 & \text{if $x = y$} \\
                -\sum_{x \leq z < y}\mu_P(x,z) & \text{otherwise}.
                \end{array} \right.\]
In this context, we have Rota's M\"obius Inversion Theorem \cite{Rota1} which tells us that for two integer-valued functions, $f,g:P \to \Z$ we have 
  \[ f(x) = \sum_{x \leq y} g(y) \; \; \text{for all $x$} \iff g(x) = \sum_{x \leq y} \mu(x,y) f(y) \; \;\text{for all $x$.}\]
Given two posets $P,Q$, a \textbf{Galois connection} $f \dashv g$ is a pair of order-preserving maps $(f,g)$ with $f:P \to Q$ and $g: Q \to P$ with the property that for all $x \in P$ and all $y \in Q$,
 \[ f(x) \leq_Q y \iff x \leq_P g(y).\]
We say $f$ is the left-adjoint to $g$ and $g$ is the right-adjoint to $f$. An order-preserving map has at most one left-adjoint or right-adjoint.

The main utility of Galois connections comes from the following lemma due to Rota.
\begin{lemma}\cite{Rota1} If $P,Q$ are posets with $x \in P$ and $b \in Q$ and $f \dashv g$ is a Galois connection then,
\[ \sum_{\substack{x \leq y \\ f(y) = b}} \mu_P(x,y) = \sum_{\substack{a \leq b \\ g(a) = x}} \mu_Q(a,b).\]
\end{lemma}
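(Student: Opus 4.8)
The plan is to introduce a single cumulative sum and evaluate it in two ways. For $b \in Q$, set
\[ \Sigma(b) \defeq \sum_{\substack{y \geq x \\ f(y) \leq_Q b}} \mu_P(x,y), \]
the sum running over $y \in P$. Everything will follow by comparing two expressions for $\Sigma(b)$.

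For the first expression I would use the Galois connection directly. Since $f(y) \leq_Q b$ is equivalent to $y \leq_P g(b)$, the index set of $\Sigma(b)$ is exactly $\{\, y : x \leq y \leq g(b)\,\}$, so
\[ \Sigma(b) = \sum_{x \leq y \leq g(b)} \mu_P(x,y) = [\,x = g(b)\,], \]
where the last equality is the defining property of the Möbius function: the sum of $\mu_P(x,\cdot)$ over an interval $[x,z]$ is the Kronecker delta $[\,x=z\,]$, and the interval is empty, contributing $0$, precisely when $x \not\leq g(b)$.

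For the second expression I would instead group the terms of $\Sigma(b)$ according to the value $a = f(y)$. Writing
\[ h(a) \defeq \sum_{\substack{y \geq x \\ f(y) = a}} \mu_P(x,y), \]
monotonicity of $f$ gives $f(y) \geq f(x)$ for every $y \geq x$, so $h(a) = 0$ unless $a \geq f(x)$; in particular $\Sigma(b) = \sum_{a \leq_Q b} h(a)$, with all contributions concentrated on the finite interval $[f(x),b]$. Comparing with the first expression yields $\sum_{a \leq b} h(a) = [\,x = g(b)\,]$ for every $b \in Q$. Applying the dual (down-set) form of Rota's Möbius Inversion Theorem over $Q$ then gives
\[ h(b) = \sum_{a \leq_Q b} \mu_Q(a,b)\,[\,x = g(a)\,] = \sum_{\substack{a \leq b \\ g(a) = x}} \mu_Q(a,b), \]
and since $h(b)$ is by definition the left-hand side of the lemma, this completes the argument.

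The step needing the most care is finiteness, since the posets are assumed only locally finite rather than finite, and the dual Möbius inversion must be legitimate. This is exactly what the support estimate $h(a) = 0$ for $a \not\geq f(x)$ controls, and it is compatible with the target because the Galois counit $f(g(a)) \leq_Q a$ (obtained by putting $y = g(a)$ and $b = a$ in the adjunction) forces $a \geq f(x)$ whenever $g(a) = x$. Thus both $h$ and the function $b \mapsto [\,x = g(b)\,]$ are supported on the principal filter of $f(x)$, every sum in sight collapses to a finite interval of $Q$, and no convergence hypothesis beyond local finiteness is required.
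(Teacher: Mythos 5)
Your proof is correct. There is nothing in the paper to compare it against: the lemma is stated with a citation to Rota and no proof is given, so any complete argument here is "new" relative to the text. What you give is essentially the classical proof of Rota's Galois connection theorem: evaluate the cumulative quantity $\Sigma(b)=\sum_{y\geq x,\,f(y)\leq_Q b}\mu_P(x,y)$ in two ways, once via the adjunction $f(y)\leq_Q b\iff y\leq_P g(b)$ (collapsing it to the delta function $[\,x=g(b)\,]$ by the defining recursion of $\mu_P$), and once by fibering over $a=f(y)$, then recover the fiber sums $h(b)$ by M\"obius inversion over $Q$. All the steps check out under the paper's conventions (both $f$ and $g$ are order-preserving, so $h$ is supported on the principal filter above $f(x)$), and your finiteness bookkeeping is exactly what is needed: each index set in sight lies in an interval $[x,g(b)]$, $[f(x),b]$, or $[c,b]$, so local finiteness suffices and the interchange of sums in the inversion is legitimate. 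The counit observation $f(g(a))\leq_Q a$, forcing $g(a)=x\Rightarrow a\geq_Q f(x)$, correctly confirms that the right-hand side of the lemma is likewise supported on the filter of $f(x)$, so the two sides are being compared on a common finite interval. This argument could be inserted verbatim where the paper currently defers to the reference.
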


%%%%
Let $\kk$ be a field of characteristic $0$ and $P$ be a poset. We call the free vector space $\kk P$ the \textbf{linearization} of $P$. This is a vector space equipped with a basis indexed by the elements of $P$. We refer to this basis as the \textbf{linearized basis}. We will abuse notation by using the same letter to denote the elements of $P$ and the basis vectors of $\kk P$. Define the \textbf{inverted basis} by
 \[ \omega_x = \omega_x^P = \sum_{x \leq y} \mu(x,y) y.\]
By M\"obius inversion, the set $\{\omega_x\}_{x \in P}$ is a basis of $\kk P$. The following lemma shows that if $f \dashv g$ is a Galois connection between two posets $P$ and $Q$, then the image of $\omega_x^P$ under $f$ has a natural description.

\begin{lemma}\label{lemma:Gal} Let $P$ and $Q$ be posets with a Galois connection $f \dashv g$. Let $\omega_x^P$ be the inverted basis for $x \in P$ and $\omega_y^Q$ be the inverted basis for $y \in Q$. Then,
 \[ f(\omega_x^P) = \sum_{a \; \lvert \; g(a) = x} \omega_a^Q\]
\end{lemma}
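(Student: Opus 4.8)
The plan is to prove the identity $f(\omega_x^P) = \sum_{a \,\mid\, g(a) = x} \omega_a^Q$ by expanding both sides in the linearized basis of $\kk Q$ and comparing coefficients, with Rota's Galois-connection lemma doing the essential work. First I would compute the left-hand side directly from the definition of the inverted basis:
\[ f(\omega_x^P) = f\Bigl( \sum_{x \leq y} \mu_P(x,y)\, y \Bigr) = \sum_{x \leq y} \mu_P(x,y)\, f(y), \]
using that $f$ is extended linearly to the linearizations. To read off the coefficient of a fixed linearized basis element $b \in Q$, I would regroup this sum according to the value $f(y)$, so that the coefficient of $b$ on the left is $\sum_{x \leq y,\; f(y) = b} \mu_P(x,y)$.

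Next I would expand the right-hand side in the linearized basis. Writing each $\omega_a^Q = \sum_{a \leq b} \mu_Q(a,b)\, b$, the coefficient of a fixed $b$ in $\sum_{a \,\mid\, g(a) = x} \omega_a^Q$ is exactly $\sum_{a \leq b,\; g(a) = x} \mu_Q(a,b)$. At this point the two coefficients I must match are
\[ \sum_{\substack{x \leq y \\ f(y) = b}} \mu_P(x,y) \quad \text{and} \quad \sum_{\substack{a \leq b \\ g(a) = x}} \mu_Q(a,b), \]
and their equality is precisely the statement of Rota's lemma (the unnamed Lemma stated just above, attributed to Rota). So the proof reduces to invoking that lemma termwise for each $b \in Q$.

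The main subtlety, rather than a deep obstacle, is bookkeeping about which sums are finite and well-defined: the inverted basis $\omega_x^P$ is a finite sum only because $P$ is locally finite, but for the expression $\sum_{a \,\mid\, g(a)=x} \omega_a^Q$ to make sense as an element of $\kk Q$ I should check that, for each fixed $b$, only finitely many pairs $(a,b)$ with $g(a) = x$ and $a \leq b$ contribute, which again follows from local finiteness of $Q$ and ensures the coefficient sum is finite. I would also note that the index set $\{a \mid g(a) = x\}$ need not be an interval, so the right-hand side is genuinely a sum of inverted basis elements rather than a single one; this is harmless for the coefficient comparison since I am matching coefficient-by-coefficient in $b$.

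Thus the proof is essentially a two-line linear-algebra expansion of each side followed by an appeal to Rota's lemma. I expect no genuine obstacle here — the only care needed is to confirm finiteness so that rearranging the sums and comparing coefficients of each $b \in Q$ is legitimate, after which the desired identity is immediate.
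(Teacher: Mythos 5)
Your proof is correct and follows essentially the same route as the paper's: both expand $f(\omega_x^P)$ linearly, regroup by the value $b = f(y)$ so the coefficient of each $b \in Q$ becomes $\sum_{x \leq y,\, f(y)=b} \mu_P(x,y)$, and then invoke Rota's lemma to identify this with $\sum_{a \leq b,\, g(a)=x} \mu_Q(a,b)$, which is the coefficient of $b$ on the right-hand side. The only cosmetic difference is that the paper writes this as one chain of equalities (applying Rota's lemma mid-stream and then switching the order of summation) rather than expanding both sides separately and matching coefficients, and your added attention to finiteness is a harmless refinement.
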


\begin{proof} This follows from direct calculation with Rota's lemma. We have

\begin{align*}
f(\omega_x^P) &= \sum_{x \leq y} \mu(x,y) f(y) \\
    &= \sum_{b \in Q} \left (\sum_{\substack{x \leq y \\ f(y) = b }} \mu(x,y)\right ) b \\
    &= \sum_{b \in Q} \left ( \sum_{\substack{a \leq b \\ g(a) = x}} \mu(a,b)\right )b
\end{align*}
where the last equality uses Rota's lemma. Switching the order of the sums gives,
\begin{align*}
    f(\omega_x^P) &= \sum_{a \; \lvert \; g(a) = x} \; \;\sum_{a \leq b}\mu(a,b) \; b = \sum_{a \; \lvert \; g(a) = x} \omega_a^Q.
\end{align*}
\end{proof}

\subsection{Poset Species}

We now extend Joyal's theory of species \cite{Joyal} to posets.

\begin{definition} A poset species $\mathbf{F}$ is a functor from the category of finite sets with bijections to the category of posets that is a singleton on the empty set. This consists of the following data.

\begin{itemize}
    \item For each finite set $I$, a poset $\mathbf{F}[I]$ called the structures of type $F$ on label set $I$.
    \item For each bijection $f:I \to J$ an order-preserving bijection $\mbf{F}[f]$
     \[\mbf{F}[I] \to \mbf{F}[J], \]
     such for two bijections $f:I \to J$ and $g:J \to K$, we have
      \[ \mbf{F}[g \circ f] = \mbf{F}[g] \circ \mbf{F}[f],\]
     and $\mbf{F}[\text{id}] = \text{id}$.
\end{itemize}
We say that $\mathbf{F}$ is connected if $|\mbf{F}[\emptyset]| = 1$. Throughout this paper, every poset species will be assumed to be connected.
\end{definition}

It is important to notice that for any species $F$ we have an order-preserving action of Aut$(I)$ on $\mbf{F}[I]$. This is called the \textbf{relabelling action} on $\mbf{F}[I]$.

\begin{example} We will follow the example of graphs throughout this paper. For a finite set $I$, let $I \choose 2$ denote the set of distinct pairs of elements in $I$. Define a poset species by
 \[ \mbf{Graphs}[I] = \left \{ G \; | \; \text{ $G = \left ( I, E(G) \subset {I \choose 2} \right )$ is a simple graph on index set $I$} \right \}.\]
 
The poset structure on graphs is given by $H \leq G$ if and only if $E(H) \subset E(G)$. Any bijection $f:I \to J$ induces a bijection between $I \choose 2$ and $J \choose 2$, hence a bijection between $\mbf{G}[I]$ and $\mbf{G}[J]$ that simply relabels the vertices. It is clear that this relabelling action is order-preserving.
\end{example}

We also need the definition of a morphism of poset species.

\begin{definition} Let $\mbf{F_1}$ and $\mbf{F_2}$ be two poset species. A \textbf{poset species morphism} $\alpha$  from $\mbf{F_1}$ to $\mbf{F_2}$ is a natural transformation of functors. In other words, $\alpha$ is a collection of poset maps $\alpha[I]: \mbf{F_1}[I] \to \mbf{F_2}[I]$ such that the following diagram commutes

\begin{center}
\begin{tikzcd}
\mbf{F_1}[I] \arrow{r}{\alpha[I]} \arrow{d}{\mbf{F_1}[f]}
&\mbf{F_2}[I] \arrow{d}{\mbf{F_2}[f]}\\
\mbf{F_1}[J] \arrow{r}{\alpha[J]} & \mbf{F_2}[J]
\end{tikzcd}
\end{center}
for any two sets $I,J$ and any bijection $f: I \to J$.
\end{definition}
A poset species isomorphism is a poset species morphism $\phi$ where each $\phi_I$ is an order-preserving bijection.

We will still need the more usual variants of species.

\begin{definition} A (connected) \textbf{vector space species} is a functor from the category of finite sets with bijections to vector spaces. A (connected) \textbf{set species} is a functor from the category of finite sets with bijections to finite sets.
\end{definition}
A \textbf{bilinear form} on a vector space species denoted by $\langle -,- \rangle$ is a bilinear $\langle - ,- \rangle_I$ which respects the relabeling action. The \textbf{direct sum} of two vector space species $\mbf{F}$ and $\mbf{G}$ is given by $\mbf{F} \oplus \mbf{G}[I] = \mbf{F}[I] \oplus \mbf{G}[I].$

More importantly, if $\mbf{F}$ is a poset species, its \textbf{linearization} $\kk \mbf{F}$ is a vector space species defined by $(\kk \mbf{F})[I] = \kk \mbf{F}[I]$. A \textbf{basis} for a vector species $\mbf{F}$ is a set species $\mbf{b}$ such that $\mbf{b}[I]$ is a basis of $\mbf{F}[I]$. The invertible basis of a linearized poset species $\kk \mbf{F}$ is given by $\mbf{w}[I] = \{\omega_x\}_{x \in \mbf{F[I]}}$.

%%%%%%%%%%%%%%%%%%%%%%%%%%%%%%%%%%%%%%%%%%%%%%%%%%
%%%%%%%%%%%%%%%%%%%%%%%%%%%%%%%%%%%%%%%%%%%%%%%%%%
%%%%%%%%%%%%%%%%%%%%%%%%%%%%%%%%%%%%%%%%%%%%%%%%%%

\subsection{Monoids, Comonoids, and Hopf Monoids}
We can now define monoids and comonoids in the category of poset species. The main difference between this and the usual theory is that all of the structure maps are order-preserving.

\begin{definition} A (connected) \textbf{poset monoid} $(\mbf{M},m)$ is a poset species equipped with a collection of order-preserving maps
 \[ m_{S, T}: \mbf{M}[S] \times \mbf{M}[T] \to \mbf{M}[I],\]
for each ordered set partition of $I$, where the poset structure on $\mbf{M}[S] \times \mbf{M}[T]$ is the product poset. This operations must satisfy the following axioms:
\vskip 2ex
\noindent \textit{(Naturality)} Let $I$ and $J$ be two sets and $f: I \to J$ be a bijection. Let $I = S \sqcup T$ be a decomposition and let $f|_S$ and $f|_T$ be the restrictions of $f$ to $S$ and $T$, respectively. This gives us a decomposition of $J = f(S) \sqcup f(T)$ and a pair of bijections $f|_S: S \to f(S)$ and $f|_T T \to f(T)$ Then, we have the following commutative diagram
\begin{center}
\begin{tikzcd}[column sep=large]
\mbf{M}[S] \otimes \mbf{M}[T] \arrow{r}{m_{S,T}} \arrow{d}{\mbf{M}[f|_S] \otimes \mbf{M}[f|_T]}
&\mbf{M}[I] \arrow{d}{\mbf{M}[f]}
\\
\mbf{M}[f(S)] \otimes \mbf{M}[f(T)] \arrow{r}{m_{f(S), f(T)}} 
&\mbf{M}[J]
\end{tikzcd}
\end{center}
\noindent \textit{(Unitality)} $\mbf{M}[\emptyset]$ has a single element. We will denote that element by $1$. For any $x \in \mbf{M}[I]$ and for the two trivial decompositions $I = I \sqcup \emptyset$ and $I = \emptyset \sqcup I$, we have
\begin{align*}
1 \cdot x = x \cdot 1 &= x
\end{align*}
 \noindent \textit{(Associativity)} Let $I = S \sqcup T \sqcup R$ be a decomposition of the index set $I$. Then the following diagram commutes

\begin{center}
\begin{tikzcd}[column sep=large]
\mbf{M}[S] \otimes \mbf{M}[T] \otimes \mbf{M}[R] \arrow{r}{\text{id} \;\otimes\; m_{T,R}} \arrow{d}{m_{S,T} \;\otimes \;\text{id}}
& \mbf{M}[S] \otimes \mbf{M}[T \sqcup R] \arrow{d}{m_{S, T \sqcup R}} 
\\
\mbf{M}[S \sqcup T] \otimes \mbf{M}[R] \arrow{r}{m_{S,T \sqcup R}}
&\mbf{M}[I]
\end{tikzcd}
\end{center}
This allows us to define a multiplication map $m_{S_1,S_2, \cdots, S_k}$ for any ordered set partition of $I$.

\end{definition}

\begin{example} Given two graphs $G$ and $H$ on different label sets $S$ and $T$, we define their disjoint union by $G \sqcup H = (S \sqcup T, E(G) \sqcup E(H))$. This is order-preserving. This makes $\mbf{Graphs}$ into a poset Hopf monoid with $m_{S,T}(G,H) = G \sqcup H$. The unit is the empty graph $1 = ( \emptyset, \emptyset)$.
\end{example}

A monoid homomorphism between two monoids $\mbf{M_1}$ and $\mbf{M_2}$ is a poset species homomorphism $\phi$ such that
 \[ m_{S,T}(\phi(x),\phi(y)) = \phi(m_{S,T}(x,y)).\]
A monoid isomorphism is a monoid homomorphism which is a poset species isomorphism.

We also have the dual notion of a monoid.

\begin{definition} A (connected) \textbf{poset comonoid}\footnote{This name is somewhat misleading since a poset comonoid is not a comonoid in the category of poset species. However, the linearization of a poset comonoid has a natural comonoid structure in the category of vector space comonoids. Further, every vector space comonoid which is linearized arises in this way from a poset comonoid.} $(\mbf{C},\Delta)$ is a poset species equipped with a collection of order-preserving maps
 \[ \Delta_{S,T}: \mbf{C}[I] \to \mbf{C}[S] \times \mbf{C}[T]. \]
for each ordered set partition of $I$, where the poset structure on $\mbf{C}[S] \times \mbf{C}[T]$ is the product poset. This operations must satisfy the following axioms:
\vskip 2ex
\noindent
\textit{(Naturality)} Let $I$ and $J$ be two sets and $\sigma: I \to J$ be a bijection. Let $I = S \sqcup T$ be a decomposition and let $\sigma|_S$ and $\sigma|_T$ be the restrictions of $\sigma$ to $S$ and $T$, respectively. Then, we have the following commutative diagrams

\begin{center}
\begin{tikzcd}[column sep=large]
\mbf{C}[I] \arrow{r}{\Delta_{S,T}} \arrow{d}{\mbf{C}[\sigma]}
&\mbf{C}[S] \otimes \mbf{C}[T] \arrow{d}{\mbf{C}[\sigma|_T] \otimes \mbf{C}[\sigma|_S]}\\
\mbf{C}[J] \arrow{r}{\Delta_{\sigma(S), \sigma(T)}}
&\mbf{C}[\sigma(S)] \otimes \mbf{C}[\sigma(T)]
\end{tikzcd}
\end{center}
\noindent
\textit{(Counitality)} $\mbf{C}[\emptyset]$ has a single element. We will denote that element by $1$. For any $x \in \mbf{C}[I]$ and for the two trivial decompositions $I = I \sqcup \emptyset$ and $I = \emptyset \sqcup I$, we have
\begin{align*}
\Delta_{I,\emptyset}(x) &= x \otimes 1,\\
\Delta_{\emptyset,I}(x) &= 1 \otimes x.
\end{align*}
\noindent
\textit{(Coassociativity)} Let $I = S \sqcup T \sqcup R$ be a decomposition of the index set $I$ into three. Then the following diagram commutes

\begin{center}
\begin{tikzcd}[column sep=large]
\mbf{C}[I] \arrow{r}{\Delta_{S, T \sqcup R}} \arrow{d}{\Delta_{S \sqcup T, R}}
&\mbf{C}[S] \otimes \mbf{C}[T \sqcup R] \arrow{d}{\text{id} \; \otimes \; \Delta_{T,R}}
\\
\mbf{C}[S \sqcup T] \otimes \mbf{C}[R] \arrow{r}{\Delta_{S,T} \; \otimes \; \text{id}}
&\mbf{C}[S] \otimes \mbf{C}[T] \otimes \mbf{C}[R]
\end{tikzcd}
\end{center}

\end{definition}

A comonoid homomorphism between two comonoids $\mbf{C_1}$ and $\mbf{C_2}$ is a poset species map $\phi$ such that
\[\Delta_{S,T}(\phi_I(x)) = (\phi_S, \phi_T) \circ (\Delta_{S,T}(x)).\]
An isomorphism is an comonoid homomorphism that is a poset species isomorphism.

\begin{example} Let $G$ be a graph on label set $I$ and $S \subset I$. Then, the restriction of $G$ to $S$ is given by $G|S = \left ( S, E(G) \cap {S \choose 2} \right )$. This gives $\mbf{Graph}$ the structure of a poset comonoid by
 \[\Delta_{S,T}(G) = \left ( G|S, G|T \right ). \]
Since restriction is order-preserving, graphs form a poset comonoid.
\end{example}

\begin{definition} A \textbf{poset Hopf monoid} is a (connected) poset species that is a monoid and a comonoid and has the additional axiom:
\vskip 2ex

\noindent \textit{(Compatibility)} Let $I = S_1 \sqcup S_2$ and $I = T_1 \sqcup T_2$ be two decompositions of $I$. Let $A = S_1 \cap T_1$, $B= S_1 \cap T_2$, $C = S_2 \cap T_1$, and $D = S_2 \cap T_2$.Then, we have the following commutative diagram

\begin{center}
\begin{tikzcd}[column sep=tiny]
\tb{H}[S_1] \otimes \tb{H}[S_2] \arrow{r}{m_{S_1, S_2}} \arrow{d}{\Delta_{A, B}\; \otimes\; \Delta_{C, D}}
& \tb{H}[I] \arrow{r}{\Delta_{T_1,T_2}}
& \tb{H}[T_1] \otimes \tb{H}[T_2] \\
\tb{H}[A] \otimes \tb{H}[B] \otimes \tb{H}[C] \otimes \tb{H}[D] \arrow{rr}{\text{id} \; \otimes \; \beta \; \otimes \ \text{id}}
&
&\tb{H}[A] \otimes \tb{H}[C] \otimes \tb{H}[B] \otimes \tb{H}[D] \arrow{u}{m_{A,C}\; \otimes \;m_{B,D}}
\end{tikzcd}
\end{center}
where $\beta$ is the braiding map $\beta(x,y) = (y,x)$.
\end{definition}

A monoid is \textbf{commutative} if $m_{S,T}(x,y) = m_{T,S}(x,y)$. Dually, a comonoid is \textbf{cocommutative} if $\Delta_{S,T}(x) = \beta_{T,S} \Delta_{T,S}(x)$, where $\beta_{T,S}: \mbf{C}[T] \times \mbf{C}[S] \to \mbf{C}[S] \times \mbf{C}[T]$ is the map that switches factors.

We will still need the usual definitions of \textbf{vector space} monoids, comonoids, and Hopf monoids as well as \textbf{set} monoids, comonoids, and Hopf monoids. These are obtained by all the axioms above where $\mbf{F}$ is a set, vector space species and the maps are set, linear maps, respectively.

\begin{definition}
The \textbf{antipode} of a vector space Hopf monoid $\mbf{H}$
is the map $S[I]: \mbf{H}[I] \to \mbf{H}[I]$ given by
 \[S[I](x) = \sum_{S_1 \sqcup \cdots \sqcup S_k} (-1)^k m_{S_1,\cdots,S_k} \circ \Delta_{S_1,\cdots,S_k}(x). \]

\end{definition}
In general this formula has a large amount of cancellation. Finding a simpler formula of the antipode is one of the major points of study in the theory of Hopf monoids. One reason for the interest in the antipode comes from its connection with combinatorial reciprocity of polynomial invariants \cite{GPHopf}.

\section{Adjoint Pairs} 
We present our main definition of adjoint pairs that describes a particular relationship between a poset monoid and a poset comonoid. We use this to understand duality, primitives, cofreeness. We then describe how our results descend to the category of combinatorial Hopf algebras.

\subsection{Main Definition and Result}

\begin{definition} A poset comonoid $(\mbf{F},\Delta)$ and a poset monoid $(\mbf{F},\Box)$ on the same poset species $\mbf{F}$ form an \textbf{adjoint pair} if $\Delta_{S,T}$ and $\Box_{S,T}$ are a Galois connection for all finite sets $S$ and $T$. This can happen in two ways. The first, denoted by $\Delta \dashv \Box$, is if for all $x \in \mbf{F}[I]$, $y \in \mbf{F}[S]$, and $ z \in \mbf{F}[T]$ we have
\[\Delta_{S,T}(x) \leq (y,z) \iff x \leq \Box_{S,T}(y,z),\]

\noindent The second, denoted by $\Box \dashv \Delta$, is if for all $x \in \mbf{F}[I]$, $y \in \mbf{F}[S]$, and $ z \in \mbf{F}[T]$ 
\[\Box_{S,T}(x,y) \leq z \iff (x,y) \leq \Delta_{S,T}(z) \]
\end{definition}
Clearly, if we reverse the order of the poset on $\mbf{F}$, then these two cases are the same. However, often the combinatorics forces a particular poset on us and reversing the order of the poset can be unnatural. 

\begin{example} Let $G_1$ be a graph on vertex set $S$ and $G_2$ be a graph on vertex set $T$. Then, define the \textbf{free product} of these two graphs $G_1 \Box G_2 = \Box_{S,T}(G_1,G_2)$ by
 \[ G_1 \Box G_2 = (G_1^c \sqcup G_2^c)^c,\]
where $G^c$ denotes the complement of the graph $G$. In other words, $G_1 \Osq G_2$ is the disjoint union of $G_1$ and $G_2$ with all the edges between $S$ and $T$ added.

It is quick to verify that
 \[ (G|S, G|T) \leq (G_1, G_2) \iff G \leq G_1 \Box G_2.\]
Thus, the comonoid of graphs $(\mbf{Graphs}, \Delta)$ and the monoid of graphs $(\mbf{Graphs},\Box)$ form an adjoint pair with $\Delta \dashv \Box$

This comultiplication $\Delta$ is part of a second adjoint pair. We have
 \[ G_1 \sqcup G_2 \leq G \iff (G_1, G_2) \leq \Delta_{S,T}(G) .\]
This implies that the comonoid $(\mbf{Graphs},\Delta)$ and the monoid $(\mbf{Graphs},m)$ form an adjoint pair with $m \dashv \Delta$.
\end{example}

The following simple result will be important throughout this paper.

\begin{theorem}\label{thm: inverted basis}
Let $(\mbf{F},\Delta)$ be a poset comonoid and $(\mbf{F}, \Box)$ be a poset monoid which form an adjoint pair with $\Delta \dashv \Box$. Let $\omega_x = \sum_{x \leq y} \mu(x,y) \; y$ be an inverted basis element of $\kk \mbf{F}$. Then,
    \[\Delta_{S,T}(\omega_x) = \sum_{\substack{x_1 \Box_{S,T} x_2 = x}} \omega_{x_1} \otimes \omega_{x_2}. \]
\end{theorem}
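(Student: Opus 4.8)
The plan is to apply Lemma \ref{lemma:Gal} directly to the Galois connection furnished by the adjoint pair, and then to unwind the inverted basis on the target product poset into a tensor product. First I would set $P = \mbf{F}[I]$ and $Q = \mbf{F}[S]\times\mbf{F}[T]$, with left adjoint $f = \Delta_{S,T}$ and right adjoint $g = \Box_{S,T}$. By the definition of an adjoint pair with $\Delta \dashv \Box$, the condition $\Delta_{S,T}(x) \leq (y,z) \iff x \leq \Box_{S,T}(y,z)$ is precisely the statement that $f \dashv g$ is a Galois connection in the sense required by the lemma. Lemma \ref{lemma:Gal} then immediately yields
\[ \Delta_{S,T}(\omega_x) = \sum_{a \; \lvert \; \Box_{S,T}(a) = x} \omega_a^{Q}, \]
where the sum runs over elements $a$ of the product poset $Q$ whose image under $\Box_{S,T}$ equals $x$.

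The second step identifies $\omega_a^{Q}$ for $a = (x_1,x_2)$ with $\omega_{x_1}\otimes\omega_{x_2}$. Here I would invoke two standard facts. First, the linearization of a product poset decomposes as a tensor product, $\kk(\mbf{F}[S]\times\mbf{F}[T]) \cong \kk\mbf{F}[S]\otimes\kk\mbf{F}[T]$, under which the linearized basis element $(y_1,y_2)$ corresponds to $y_1\otimes y_2$. Second, the M\"obius function of a product poset factors as $\mu_{Q}((x_1,x_2),(y_1,y_2)) = \mu(x_1,y_1)\,\mu(x_2,y_2)$. Combining these and expanding the definition $\omega_{(x_1,x_2)}^Q = \sum_{(x_1,x_2)\leq(y_1,y_2)} \mu_Q((x_1,x_2),(y_1,y_2))\,(y_1,y_2)$, the double sum over $x_1\leq y_1$ and $x_2\leq y_2$ separates, giving $\omega_{(x_1,x_2)}^Q = \omega_{x_1}\otimes\omega_{x_2}$.

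Substituting this factorization into the sum from the first step, and rewriting the indexing condition $\Box_{S,T}(x_1,x_2) = x$ as $x_1\Box_{S,T}x_2 = x$, produces exactly the claimed formula. I do not expect any serious obstacle: the whole argument is essentially a specialization of Lemma \ref{lemma:Gal}. The only points requiring care are confirming that the conventions match, so that the lemma applies with $\Delta_{S,T}$ in the role of the left adjoint $f$ and $\Box_{S,T}$ in the role of the right adjoint $g$, and that the product-poset M\"obius identity is correctly invoked so that the inverted basis genuinely factors across the tensor product. Everything else is routine bookkeeping once these two ingredients are in place.
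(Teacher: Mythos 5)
Your proposal is correct and takes essentially the same route as the paper's proof: both apply Lemma \ref{lemma:Gal} to the Galois connection $\Delta_{S,T} \dashv \Box_{S,T}$ (with $P = \mbf{F}[I]$, $Q = \mbf{F}[S]\times\mbf{F}[T]$), and both then factor the inverted basis of the product poset as $\omega_{(x_1,x_2)} = \omega_{x_1}\otimes\omega_{x_2}$ using the product formula for M\"obius functions. No gaps; the two arguments coincide step for step.
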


\begin{proof}

We use the Galois connection $\Delta_{S,T} \dashv \Box_{S,T}$ and Lemma \ref{lemma:Gal} to study the comonoid structure. 

By a direct application, we have
 \[ \Delta_{S, T}(\omega_x) = \sum_{x_1 \Osq x_2 = x} \omega_{(x_1, x_2)},\]
where $\omega_{(x_1,x_2)}$ is the inverted basis in the linearization of the product poset $\kk (\mbf{C}[S] \times \mbf{C}[T]) = \kk\mbf{C}[S] \otimes \kk\mbf{C}[T]$. Notice,
\begin{align*}
\omega_{(x_1,x_2)}  &=
    \sum_{(x_1,x_2)\leq (y_1 ,y_2)} \mu((x_1, x_2), (y_1 , y_2))\; y_1 \otimes y_2 \\
    &= \sum_{(x_1,x_2) \leq (y_1,y_2)} \mu(x_1,y_1) \mu(x_2,y_2) y_1 \otimes y_2 \\
    &= \left ( \sum_{x_1 \leq y_1} \mu(x_1,y_1) y_1\right ) \otimes \left ( \sum_{x_2 \leq y_2} \mu(x_2,y_2) y_2\right ) \\
    &= \omega_{x_1} \otimes \omega_{x_2}
\end{align*}

This gives us
 \[ \Delta_{S, T}(\omega_x) = \sum_{x_1 \Osq x_2} \omega_{x_1} \otimes \omega_{x_2}.\]
\end{proof}
\noindent If $\Box \dashv \Delta$, then the previous theorem still holds replacing $\omega_x$ by $\omega^x = \sum_{y \leq x} \mu(y,x) y$.

Notice that in the previous example the monoid $(\mbf{Graphs}, m)$ and the comonoid $(\mbf{Graphs},\Delta)$ form a Hopf monoid, that is they satisfy the compatibility axiom, whereas the monoid $(\mbf{Graphs}, \Box)$ and the comonoid $(\mbf{Graphs},\Delta)$ do not form a Hopf monoid. This leads to the following definition.

\begin{definition} A poset Hopf monoid $(\mbf{H},m,\Delta)$ is \textbf{self-adjoint} if the monoid $(\mbf{H},m)$ and the comonoid $(\mbf{H},\Delta)$ form an adjoint pair.
\end{definition}
We study these objects in detail in Section 4.

\subsection{Duality}
Given a vector space species $\mbf{F}$, its \textbf{linear dual} $\mbf{F}^*$ is given by $\mbf{F}^*[I] = \mbf{F}[I]^*$. Since duality of vector spaces is a contravariant functor, the linear dual of a monoid has a natural comonoid structure and vice-versa. One way of describing the dual multiplication and comultiplications is by choosing an explicit basis. If $\mbf{b}$ is a basis of $\mbf{F}$, then we can define a bilinear form $\langle -, - \rangle_I$ on $\mbf{F}[I]$ such that $\mbf{b}[I]$ is an orthogonal basis. Using this, we can build a dual basis $\mbf{b}^*$ for $\mbf{F}^*$. In this basis, the dual comultiplication $m_{S,T}^*: \mbf{F}[I] \to \mbf{F}[S] \otimes \mbf{F}[T]$ is given by
 \[ m_{S,T}^*(x^*) = \sum_{\substack{y \in \mbf{b}[S] \\ z \in \mbf{b}[T]}} \langle m_{S,T}(y,z), x\rangle \quad y^* \otimes z^*.\] 
 
Similarly, if we have a vector space comonoid $\mbf{F}$, then on the dual basis, the dual multiplication $\Delta_{S,T}^*: \mbf{F}[S] \otimes \mbf{F}[T] \to \mbf{F}[I]$ takes the form
 \[ \Delta_{S,T}^*(x^*,y^*) = \sum_{\langle x \otimes y, \Delta_{S,T}(z) \rangle \not = 0}\langle x \otimes y, \Delta_{S,T}(z) \rangle \quad z^*,\]
where we extend the bilinear form to the tensor product by $\langle x \otimes x', y \otimes y' \rangle = \langle x, y \rangle \cdot \langle x', y' \rangle$.
 
We say that a vector space monoid $(\mbf{F},m)$ and a vector space comonoid $(\mbf{G},\Delta)$ are \textbf{dual} if there is an isomorphism of monoids $\mbf{F} \cong \mbf{G}^*$. This isomorphism can be realized by a non-degenerate bilinear form $\langle -, -\rangle$ such that
    \[ \langle m_{S,T}(x,y), z \rangle = \langle x \otimes y, \Delta(z)\rangle.\]

A vector space Hopf monoid $(\mbf{H},m, \Delta)$ is \textbf{self-dual} if there exists an isomorphism of species $\mbf{H} \cong \mbf{H}^*$ that is simultaneously a monoid and comonoid isomorphism. This isomorphism can be realized as a symmetric non-degenerate bilinear form $\langle -, - \rangle_I$ such that
 \[\langle \Delta_{S,T}(x), y \otimes z \rangle = \langle x, m_{S,T}(y,z) \rangle. \]

\begin{definition} If $\mbf{F}$ is a poset species, then we can define a bilinear form $\langle -,- \rangle$ on $\kk \mbf{F}$ by
 \[ \langle x, y \rangle = \zeta(x,y)  = \begin{cases}
    1 & \text{ if $x \leq y$} \\
    0 & \text{else}
    \end{cases}\]
for $x,y \in \mbf{F}[I]$. We call this the \textbf{zeta bilinear form} of $\mbf{F}$. 

\end{definition}

Then, if $(\mbf{F}, \Delta)$ is a poset comonoid and $(\mbf{F}, \Box)$ is a poset monoid that form an adjoint pair with $\Delta \dashv \Box$, by the definition of Galois connection we have
\[ \langle \Delta_{S,T}(x), y \otimes z \rangle = \langle x, y \Box_{S,T} z \rangle.\]

Combining this with Theorem \ref{thm: inverted basis}, we obtain the following result.

\begin{theorem}\label{thm:duality} Let $(\mbf{F}, \Delta)$ be a poset comonoid and $(\mbf{F},\Box)$ be a poset monoid that form an adjoint pair with $\Delta \dashv \Box$, then $(\mbf{F},\Box) \cong (\mbf{F}^*, \Delta^*)$ with explicit isomorphism $x \mapsto \omega_x^*$. 
\end{theorem}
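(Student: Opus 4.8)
The plan is to exhibit the stated map $x \mapsto \omega_x^*$ concretely as the map induced by the zeta bilinear form, and then to check that it intertwines $\Box$ with $\Delta^*$. Define $\Phi\colon \kk\mbf{F}[I] \to \kk\mbf{F}[I]^*$ on the linearized basis by $\Phi(x) = \langle -, x\rangle$ and extend linearly. First I would identify $\Phi(x)$ with $\omega_x^*$. M\"obius inversion of the definition of $\omega$ gives the expansion $z = \sum_{z \le y}\omega_y$ of a linearized basis element in the inverted basis; evaluating the dual inverted basis functional $\omega_x^*$ on $z$ then yields $\omega_x^*(z) = \sum_{z \le y}\delta_{x,y} = [z \le x] = \langle z, x\rangle$, so indeed $\omega_x^* = \langle -, x\rangle = \Phi(x)$. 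In particular $\Phi$ carries the linearized basis $\{x\}$ bijectively onto the inverted dual basis $\{\omega_x^*\}$, hence it is an isomorphism of vector space species; naturality is immediate because order-preserving relabelling bijections preserve $\le$, and therefore preserve both $\zeta$ and the inverted basis.

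The core step is to verify that $\Phi$ is a morphism of monoids, i.e. that $\Phi(x \Box_{S,T} y) = \Delta^*_{S,T}(\Phi(x) \otimes \Phi(y))$ for $x \in \mbf{F}[S]$ and $y \in \mbf{F}[T]$. Since $\Delta^*_{S,T}$ is by definition the transpose of $\Delta_{S,T}$, it suffices to test both sides as functionals against an arbitrary $w \in \mbf{F}[I]$. On the left, $\Phi(x \Box y)(w) = \langle w, x \Box_{S,T} y\rangle$. On the right, $\Delta^*_{S,T}(\Phi(x)\otimes\Phi(y))(w) = (\Phi(x)\otimes\Phi(y))(\Delta_{S,T}(w)) = \langle \Delta_{S,T}(w), x \otimes y\rangle$. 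These agree by the defining identity of the adjoint pair, $\langle \Delta_{S,T}(w), x \otimes y\rangle = \langle w, x \Box_{S,T} y\rangle$, which is exactly the Galois connection $\Delta \dashv \Box$ read off through the zeta form (the displayed identity established just before the theorem). The unit/counit compatibility is automatic from connectedness, since $\mbf{F}[\emptyset]$ is a singleton and $\Phi$ sends its unique basis element to the unit of the dual monoid.

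Equivalently --- and this is the route that most directly uses Theorem~\ref{thm: inverted basis} --- one may pair against the inverted basis $\{\omega_w\}_{w \in \mbf{F}[I]}$ rather than against the zeta form. Testing $\omega_{x\Box y}^* = \Delta_{S,T}^*(\omega_x^* \otimes \omega_y^*)$ on $\omega_w$ with the canonical evaluation pairing $(\omega_a^*, \omega_b) = \delta_{a,b}$ gives $\delta_{x \Box y,\, w}$ on the left, while on the right Theorem~\ref{thm: inverted basis} expands $\Delta_{S,T}(\omega_w) = \sum_{w_1 \Box_{S,T} w_2 = w} \omega_{w_1}\otimes\omega_{w_2}$, so the pairing collapses to $\sum_{w_1 \Box w_2 = w}\delta_{x,w_1}\delta_{y,w_2} = \delta_{x\Box y,\, w}$. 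The two sides match for every $w$, proving the morphism property.

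I expect the only real subtlety to be bookkeeping rather than mathematical depth: one must keep the two pairings distinct (the canonical evaluation pairing between $\kk\mbf{F}^*$ and $\kk\mbf{F}$, with respect to which $\{\omega_x^*\}$ is dual to $\{\omega_x\}$, versus the non-symmetric zeta form $\langle -,-\rangle$ on $\kk\mbf{F}$), and be careful about which tensor slot each argument occupies, since $\zeta$ is not symmetric. All the genuine content is packaged in the single identity $\langle \Delta_{S,T}(w), x\otimes y\rangle = \langle w, x\Box_{S,T} y\rangle$ together with Theorem~\ref{thm: inverted basis}; once the explicit map is recognized as $x \mapsto \langle -, x\rangle = \omega_x^*$, the remainder is a short transpose computation and a routine check of naturality.
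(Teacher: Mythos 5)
Your proposal is correct and follows essentially the same route as the paper: the paper's proof is exactly the observation that the zeta form turns the Galois connection $\Delta \dashv \Box$ into the identity $\langle \Delta_{S,T}(w), x \otimes y\rangle = \langle w, x \Box_{S,T} y\rangle$, combined with Theorem~\ref{thm: inverted basis}; your two ``routes'' are just the two halves of that one-line argument written out in full. In fact your writeup supplies details the paper leaves implicit --- notably the identification $\omega_x^* = \langle -, x\rangle$ via the expansion $z = \sum_{z \le y}\omega_y$, and the care with the non-symmetric pairing --- so it is a faithful elaboration rather than a different proof.
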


One thing to notice is that for a self-adjoint poset Hopf monoid $(\mbf{H}, m, \Delta)$ this gives an isomorphism between the monoid $(\mbf{H},m)$ and the monoid $(\mbf{H}^*,\Delta)$. Similarily, by reversing the poset order, we have an isomorphism between $(\mbf{H}, \Delta)$ and $(\mbf{H}^*, m^*)$. This does not immediately imply that $\mbf{H}$ is self-dual because these two isomorphisms are not the same species map. However, in Section 4 we see that it is still true that every self-adjoint poset Hopf monoid is self-dual.

\subsection{Primitives}

We now turn to Crapo and Schmitt's observation that posets seem to help calculate primitives of Hopf algebras.

\begin{definition} Let $(\mbf{M},m)$ be a poset monoid. An element $x \in \mbf{M}[I]$ is \textbf{indecomposable} if it is not of the form $x = m_{S,T}(y,z)$ for some $I = S \sqcup T$ and $y \in \mbf{M}[S]$ and $z \in \mbf{M}[T]$. For $\kk \mbf{M}$ with linearized basis $\mbf{b}$, we can define a bilinear form that makes $\mbf{b}$ orthonormal. Then, the subspace of indecomposables of $\kk \mbf{M}[I]$ is given by the elements $x$ such that
 \[ \langle x, m_{S,T}(y,z) \rangle = 0,\]
for all $I = S \sqcup T$ and $y \in \kk \mbf{M}[S]$ and $z \in \kk \mbf{M}[T]$. This is the same as the vector space spanned by the basis elements $x \in \mbf{M}[I]$ which are indecomposable. 
\end{definition}

\begin{definition} Let $(\mbf{C}, \Delta)$ be a vector space comonoid. Then an element $x \in \mbf{C}$ is \textbf{primitive} if $\Delta_{S,T}(x) = 0$ for all non-trivial $I = S \sqcup T$. The subspecies of primitives of $\mbf{C}$ is denoted by $\mathcal{P}(\mbf{C})$.
\end{definition}

We now show that for an adjoint pair, the primitives of one determine the indecomposables of the other.

\begin{theorem}\label{thm:prim} Let $(\mbf{F},\Delta)$ and $(\mbf{F},\Box)$ be a monoid and a comonoid that form an adjoint pair with $\Delta \dashv \Box$. An inverted basis element $\omega_x \in \kk \mbf{F}[I]$ is primitive if and only if $x$ is indecomposable in the monoid $(\mbf{F}, \Box)$. Further, the set $\{\omega_x \lvert \; \text{$x \in \mbf{H}[I]$ and $x$ is $\Box$-indecomposable}\}$ is a basis for $\mathcal{P}(\kk\mbf{F})[I]$.
\end{theorem}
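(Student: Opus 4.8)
The plan is to derive everything from Theorem~\ref{thm: inverted basis}, which expresses $\Delta_{S,T}(\omega_x)$ as a sum over $\Box$-factorizations of $x$. First I would fix a nontrivial decomposition $I = S \sqcup T$ and recall that, since the inverted basis is a basis, the tensors $\{\omega_a \otimes \omega_b \;:\; a \in \mbf{F}[S],\, b \in \mbf{F}[T]\}$ form a basis of $\kk\mbf{F}[S] \otimes \kk\mbf{F}[T]$. By Theorem~\ref{thm: inverted basis},
\[ \Delta_{S,T}(\omega_x) = \sum_{a \,\Box_{S,T}\, b \,=\, x} \omega_a \otimes \omega_b, \]
and since the terms on the right are distinct basis vectors, this sum vanishes precisely when there is no pair $(a,b)$ with $a \Box_{S,T} b = x$. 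Ranging over all nontrivial $S \sqcup T = I$ then yields the first claim: $\omega_x$ is primitive if and only if $x$ admits no nontrivial $\Box$-factorization, i.e.\ $x$ is $\Box$-indecomposable.

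For the second claim, the indecomposable $\omega_x$ are linearly independent as a subset of the inverted basis, and each is primitive by the first part, so it remains only to show that they span $\mathcal{P}(\kk\mbf{F})[I]$. I would take an arbitrary primitive $p = \sum_{x} c_x \omega_x$ and show that $c_x = 0$ whenever $x$ is decomposable. Fixing a nontrivial decomposition $I = S \sqcup T$ and applying Theorem~\ref{thm: inverted basis} termwise gives
\[ \Delta_{S,T}(p) = \sum_{a \in \mbf{F}[S],\, b \in \mbf{F}[T]} c_{\,a \,\Box_{S,T}\, b}\; \omega_a \otimes \omega_b, \]
where I have used that $\Box_{S,T}$ is a function to regroup the contributions: the coefficient of a fixed basis tensor $\omega_a \otimes \omega_b$ is exactly $c_{\,a \Box_{S,T} b}$. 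Since $p$ is primitive this expression is zero, so linear independence of the $\omega_a \otimes \omega_b$ forces $c_{\,a \Box_{S,T} b} = 0$ for all $a,b$. Thus $c_x = 0$ for every $x$ of the form $a \Box_{S,T} b$ over some nontrivial $(S,T)$, which is precisely every decomposable $x$; hence $p$ lies in the span of the indecomposable $\omega_x$.

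The argument is essentially bookkeeping once Theorem~\ref{thm: inverted basis} is in hand; the only point requiring genuine care is the coefficient extraction in the spanning direction, where one must observe that distinct factorizations $(a,b)$ over a fixed $(S,T)$ index distinct, hence linearly independent, basis tensors, so no cancellation can conceal a nonzero $c_x$. I would also flag the mild convention, implicit in the definitions of \emph{primitive} and \emph{indecomposable}, that only nontrivial decompositions $S \sqcup T$ (both parts nonempty) are in play, so that the two notions quantify over the same decompositions and the equivalence in the first claim is exact.
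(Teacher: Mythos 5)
Your proof is correct, and it takes a genuinely different route from the paper's own argument. The paper never invokes Theorem~\ref{thm: inverted basis} in its proof of this statement; instead it works with the zeta bilinear form, combining the adjunction identity $\langle \Delta_{S,T}(x), y \otimes z \rangle = \langle x, y \Box_{S,T} z \rangle$ with the computation $\langle \omega_x, y \rangle = \sum_{x \leq z \leq y} \mu(x,z)$, which equals $1$ if $y = x$ and $0$ otherwise, so that $\langle \Delta_{S,T}(\omega_x), y \otimes z \rangle$ vanishes for all $y,z$ exactly when $x$ is indecomposable; turning ``pairs to zero against every $y \otimes z$'' into ``$\Delta_{S,T}(\omega_x) = 0$'' silently uses nondegeneracy of the zeta form (it is unitriangular). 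Your argument bypasses the bilinear form entirely and reads everything off the expansion $\Delta_{S,T}(\omega_x) = \sum_{a \Box_{S,T} b = x} \omega_a \otimes \omega_b$ together with linear independence of the basis tensors $\omega_a \otimes \omega_b$; this is more elementary, and it has the concrete advantage of making the spanning half of the basis claim explicit. The paper's proof ends with ``this implies the result,'' leaving the reader to check that an arbitrary primitive $p = \sum_x c_x \omega_x$ has vanishing coefficients on decomposables, which is precisely your regrouped identity $\Delta_{S,T}(p) = \sum_{a,b} c_{a \Box_{S,T} b}\, \omega_a \otimes \omega_b$. What the bilinear-form route buys in exchange is the duality picture of Theorem~\ref{thm:duality}: it exhibits the primitives of the comonoid as the orthogonal complement of the span of the decomposables of the monoid, and this orthogonality formulation is exactly what the paper reuses in Section~4, in the lemma establishing $\mbf{H} = \mbf{p} \oplus \mbf{d}$ for self-adjoint Hopf monoids. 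Your closing remark about conventions is also well taken: as literally stated, the paper's definition of indecomposable quantifies over all decompositions including $I = I \sqcup \emptyset$, under which no element would be indecomposable since $x = m_{I,\emptyset}(x,1)$ by unitality; restricting both primitivity and indecomposability to nontrivial decompositions, as you do, is the intended reading and is what makes the equivalence exact.
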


\begin{proof} Let $\langle -, -\rangle$ denote the zeta bilinear form on $\mbf{F}$. Then, we have
 \[ \langle \Delta(x), y \otimes z \rangle = \langle x, y \Box_{S,T} z \rangle\]
for any $x \in \kk \mbf{F}[I], y \in \kk \mbf{F}[S], z \in \kk \mbf{F}[T]$. For any inverted basis element $\omega_x$ and any linearized basis element $y$, we have
\begin{align*}
    \langle \omega_x, y \rangle &= \sum_{x \leq z} \mu(x,z) \zeta(z,y) \\
    &= \sum_{x \leq z \leq y} \mu(x,z) \\
    &= \begin{cases}
    1 & \text{if $y = x$} \\
    0 & \text{else} \\
    \end{cases}
\end{align*}

Combining these two facts, we have that for the following expression
 \[ \langle \Delta_{S,T}(\omega_x), y \otimes z \rangle = \langle \omega_x, y \Box_{S,T} z \rangle,\]
the right-hand-side is zero for all $y$ and $z$ if and only if $x$ is indecomposable and the left-hand-side is zero for all $y$ and $z$ if and only if $\omega_x$ is primitive. This implies the result.

\end{proof}

\begin{example} For $\mbf{Graphs}$, we have seen that $\Delta \dashv \Box$ where $\Box$ is the free product of graphs. This means that the subspecies of primitives of $\mbf{Graphs}$ has a basis given by $\omega_G$ for graphs that are indecomposable under the free product. With the realization that
 \[(G^{c} \Osq H^{c})^{c} = G \sqcup H, \]
we can identify these $\Box$-indecomposable graphs as the complements of connected graphs. Thus, a basis of the primitives is given by elements of the form
 \[ \omega_G = \sum_{G \leq H} (-1)^{E(H) - E(G)} H,\]
for $G$ whose complement is connected.
\end{example}

\subsection{Cofreeness and Freeness}

We now show why passing to the inverted basis helps proves cofreeness and freeness.

Let $\mbf{q}$ be a set species with the property that $\mbf{q}[\emptyset] = 0$. The \textbf{tensor species} on $\mbf{q}$ denoted by $\mathcal{T}(\mbf{q})$ is a vector space species given by

\[ \TT[I] = \kk \{(A \vDash I, x_1, x_2, \cdots, x_k) \; \lvert \; x_i \in \mbf{q}[A_i], \}\]
where $A \vDash I$ denotes that $A$ is an ordered set partition of $I$. 

The \textbf{free monoid} on $\mbf{q}$ is the species $\TT$ with the multiplication
\[ (A \vDash S, x_1, \cdots, x_k) \cdot (B \vDash T, y_1, \cdots, y_\ell) = (A \sqcup B \vDash I, x_1, \cdots, x_k, y_1, \cdots, y_\ell).\]
We say that $S \subset I$ is an initial segment of an ordered set partition $A \vDash I$ if there is some integer $i$ such that $(A_1 \sqcup A_2 \sqcup \cdots \sqcup A_i) = S$. In this case, we say $A|S = (A_1 \sqcup A_2 \sqcup \cdots \sqcup A_i)$ and $A/S = (A_{i+1}  \sqcup \cdots \sqcup A_k)$ Then, the \textbf{free comonoid} on $\mbf{q}$ is the species $\TT$ with the comultiplication map given by
 \[\Delta_{S,T}((A \vDash I, x_1, \cdots, x_k)) = \left \{ 
 \begin{array}{ll}
 (A|S, x_1, \cdots, x_i) \otimes (A/S, x_{i+1}, \cdots, x_k) & \text{if $S$ is an initial segment of $A$} \\
 0 & \text{else}
 \end{array} \right.
 \]

\begin{definition} A vector space monoid is free if for some $\mbf{q}$ it is isomorphic to $\TT$ as a monoid. A vector space comonoid is cofree if for some $\mbf{q}$ it is isomorphic to $\TT$ as a comonoid.
\end{definition}

A standard theorem is that a vector space comonoid is cofree if and only if its dual monoid is free. We say that a set monoid $(\mbf{M},\Box)$ has the unique factorizaiton property if every element $x \in \mbf{M}[I]$ has a unique expression as $x = \Box_{S_1,\cdots,S_k}(x_1,\cdots,x_k)$ for $x_i$ indecomposable. Clearly, if a set monoid $(\mbf{M},\Box)$ has the unique factorization property, it is free.

\begin{theorem} Let $(\mbf{C},\Delta)$ be a poset comonoid and $(\mbf{C},\Box)$ be a poset monoid that form an adjoint pair with $\Delta \dashv \Box$. If $(\mbf{C},\Box)$ has the unique factorization property, then $(\kk\mbf{C}, \Delta)$ is cofree.
\end{theorem}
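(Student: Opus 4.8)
The plan is to construct an explicit comonoid isomorphism $(\kk\mbf{C},\Delta)\cong\TT$, where $\mbf{q}$ is the set species of $\Box$-indecomposables, $\mbf{q}[I]=\{x\in\mbf{C}[I]\mid x\text{ is }\Box\text{-indecomposable}\}$ (with $\mbf{q}[\emptyset]=\emptyset$, the unit being the empty product). The conceptual point is that it is the inverted basis, not the linearized basis, whose comultiplication already takes the deconcatenation form that defines the free comonoid, so that the isomorphism is essentially forced once one reads off Theorem~\ref{thm: inverted basis} in the presence of unique factorization.

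First I would use the unique factorization property to set up a bijection of bases. Each $x\in\mbf{C}[I]$ has a unique expression $x=\Box_{S_1,\ldots,S_k}(c_1,\ldots,c_k)$ with $c_i\in\mbf{q}[S_i]$; this determines an ordered set partition $A=(S_1,\ldots,S_k)\vDash I$, and I set $\psi(\omega_x)=(A,c_1,\ldots,c_k)\in\TT[I]$. Because $\{\omega_x\}_{x\in\mbf{C}[I]}$ is a basis of $\kk\mbf{C}[I]$ and unique factorization makes $x\mapsto(A,c_1,\ldots,c_k)$ a bijection onto the distinguished basis of $\TT[I]$, the map $\psi$ is a linear isomorphism; naturality under relabelling bijections is immediate, since relabelling carries factorizations to factorizations.

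Next I would verify that $\psi$ intertwines the comultiplications. By Theorem~\ref{thm: inverted basis}, $\Delta_{S,T}(\omega_x)=\sum_{x_1\Box_{S,T}x_2=x}\omega_{x_1}\otimes\omega_{x_2}$. Unique factorization forces every such factorization to arise by splitting $c_1\cdots c_k$ at an initial segment: the sum is nonempty exactly when $S=S_1\sqcup\cdots\sqcup S_j$ is an initial segment of $A$, in which case it consists of the single term $\omega_{x_1}\otimes\omega_{x_2}$ with $x_1=\Box_{S_1,\ldots,S_j}(c_1,\ldots,c_j)$ and $x_2=\Box_{S_{j+1},\ldots,S_k}(c_{j+1},\ldots,c_k)$. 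This matches exactly the deconcatenation comultiplication of $\TT$ applied to $\psi(\omega_x)$, so $(\psi\otimes\psi)\circ\Delta_{S,T}=\Delta_{S,T}\circ\psi$, and $\psi$ is a comonoid isomorphism, giving cofreeness.

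I expect the main obstacle to be the middle assertion: that under unique factorization the factorizations $x=x_1\Box_{S,T}x_2$ appearing in Theorem~\ref{thm: inverted basis} correspond bijectively to initial-segment splittings of $A$, with no spurious or repeated terms. Concretely, one must argue that the supports of $x_1$ and $x_2$ are forced to be unions of blocks $S_i$ occurring consecutively, which is precisely where freeness of $(\mbf{C},\Box)$ enters. A shorter, less computational route bypasses this entirely: by Theorem~\ref{thm:duality} the dual monoid of the comonoid $(\kk\mbf{C},\Delta)$ is isomorphic to the monoid $(\kk\mbf{C},\Box)$, and unique factorization makes $(\mbf{C},\Box)$ a free set monoid on $\mbf{q}$, whose linearization is $\TT$; since a comonoid is cofree if and only if its dual monoid is free, this immediately yields the claim.
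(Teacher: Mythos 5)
Your fallback argument is exactly the paper's proof: the paper disposes of this theorem in one sentence, citing Theorem~\ref{thm:duality} (the dual of $(\kk\mbf{C},\Delta)$ is the monoid $(\kk\mbf{C},\Box)$) together with the two observations stated just before it, namely that the unique factorization property makes the set monoid $(\mbf{C},\Box)$ free and that a comonoid is cofree if and only if its dual monoid is free. Your primary route is genuinely different and is also correct: instead of inferring cofreeness abstractly by dualizing, it exhibits the comonoid isomorphism $(\kk\mbf{C},\Delta)\cong\TT$ directly, sending $\omega_x\mapsto(A,c_1,\ldots,c_k)$. The ``obstacle'' you flag does close, in two lines: if $x=\Box_{S,T}(x_1,x_2)$ with $S,T$ nonempty, factor $x_1$ and $x_2$ into indecomposables; by associativity the concatenation of these two factorizations is a factorization of $x$ into indecomposables, so by uniqueness it coincides with $(A,c_1,\ldots,c_k)$, forcing $S=S_1\sqcup\cdots\sqcup S_j$ to be an initial segment of $A$ and forcing $x_1=\Box_{S_1,\ldots,S_j}(c_1,\ldots,c_j)$, $x_2=\Box_{S_{j+1},\ldots,S_k}(c_{j+1},\ldots,c_k)$; in particular there is exactly one term in the sum of Theorem~\ref{thm: inverted basis} for each admissible $(S,T)$ and none otherwise. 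What your explicit route buys is a concrete witness: it identifies the cogenerating species as the $\Box$-indecomposables and shows that the inverted basis is literally a deconcatenation basis, which is the conceptual content behind the Crapo--Schmitt observation the paper is formalizing. What the paper's route (your shorter one) buys is brevity, at the cost of leaning on the unproved ``standard theorem'' that cofreeness is dual to freeness, and of producing no explicit isomorphism.
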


\begin{proof} This follows from the observations above and from theorem \ref{thm:duality} which says that the dual of $(\kk \mbf{C},\Delta)$ is isomorphic to $(\kk \mbf{C},\Box)$.
\end{proof}

There is a similar notion for (co)commutative (co)monoids. Let $\mbf{q}$ be a set species with $\mbf{q}[\emptyset]$. Define a vector space species $\mathcal{S}(\mbf{q})$ by
 \[ \mathcal{S}(\mbf{q})[I] = \mbf{k} \left \{(A \vdash I, \{x_i\}) \; \lvert \; x_i \in \mbf{q}[A_i] \right \},\]
where $A \vdash I$ denotes an unordered set partition of $I$. The \textbf{free commutative monoid} on $\mbf{q}$ is the monoid structure on this species given by
 \[ (A \vdash S, \{x_i\}) \cdot (B \vdash T, \{y_i\}) = (A \sqcup B \vdash I, \{x_i\} \cup \{y_j\}).\]
Say that a subset $S \subset I$ is compatible with an unordered set partition $A \vdash I$ if $S = \bigsqcup_{i \in J}A_i$ for some parts $A_i$ of $A$. If $S$ is compatible with $A \vdash I$, then we let $A|S$ denote the set of parts $\{A_i\}_{i \in J}$. The \textbf{free cocommutative comonoid} on $\mbf{q}$ is the comonoid on $\mathcal{S}(\mbf{q})$ given by
\[\Delta_{S,T}((A \vdash I, \{x_i\})) = \left \{ 
 \begin{array}{ll}
 (A|S, \{x_i\}_{i \in J}) \otimes (A|T, \{x_i\}_{i \not \in J}) & \text{if $S$ is compatible with $A$} \\
 0 & \text{else}
 \end{array} \right.
 \]
 
A vector space monoid is \textbf{free commutative} if it is isomorphic to $\mathcal{S}(\mbf{q})$ as a monoid and a vector space comonoid is \textbf{cofree cocommutative} if it is isomorphic to $\mathcal{S}(\mbf{q})$ as a comonoid. 

A commutative set monoid $(\mbf{M},\Box)$ has the unique unordered factorization property if for every $x \in \mbf{M}[I]$ there is a unique expression (up to reordering of the parts) of $x$ as $x = \Box_{S_1,\cdots,S_k}(x_1,\cdots,x_k)$ where the $x_i$ are indecomposable.

As before, we have the following result.

\begin{theorem}\label{thm:cofree cocom} Let $(\mbf{C},\Delta)$ be a poset comonoid and $(\mbf{C},\Box)$ be a commutative poset monoid that form an adjoint pair with $\Delta \dashv \Box$. If $(\mbf{C},\Box)$ has the unique unordered factorization property, then $(\kk\mbf{C}, \Delta)$ is cofree cocommutative.
\end{theorem}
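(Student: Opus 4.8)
The plan is to run the same dualization argument used for the non-commutative cofreeness theorem, but now in the commutative/cocommutative world. The engine is again Theorem \ref{thm:duality}: because $(\mbf{C},\Delta)$ and $(\mbf{C},\Box)$ form an adjoint pair with $\Delta \dashv \Box$, the dual monoid of the comonoid $(\kk\mbf{C},\Delta)$ is isomorphic as a monoid to $(\kk\mbf{C},\Box)$, with the explicit isomorphism sending $x$ to $\omega_x^*$. Thus cofreeness of $(\kk\mbf{C},\Delta)$ is converted into a freeness statement about $(\kk\mbf{C},\Box)$.

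First I would record the commutative analogue of the standard duality already invoked above: the linear dual of the free commutative monoid $\mathcal{S}(\mbf{q})$ is the cofree cocommutative comonoid $\mathcal{S}(\mbf{q}^*)$, and conversely a cocommutative comonoid is cofree cocommutative precisely when its dual monoid is free commutative. Next, I would observe that the hypotheses make $(\kk\mbf{C},\Box)$ free commutative: since $(\mbf{C},\Box)$ is a commutative set monoid with the unique unordered factorization property, taking $\mbf{q}$ to be the subspecies of $\Box$-indecomposables gives a monoid isomorphism $(\kk\mbf{C},\Box) \cong \mathcal{S}(\mbf{q})$, because every element is a unique unordered product of indecomposables, which is exactly the defining data of $\mathcal{S}(\mbf{q})$. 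Combining this with Theorem \ref{thm:duality} shows that the dual monoid of $(\kk\mbf{C},\Delta)$ is free commutative.

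Finally, I would dualize. Applying the commutative duality from the first step, freeness (commutative) of the dual monoid forces $(\kk\mbf{C},\Delta)$ to be cofree cocommutative; concretely, dualizing the isomorphism $(\kk\mbf{C},\Box) \cong \mathcal{S}(\mbf{q})$ and transporting it through $x \mapsto \omega_x^*$ yields a comonoid isomorphism $(\kk\mbf{C},\Delta) \cong \mathcal{S}(\mbf{q}^*)$. In particular $(\kk\mbf{C},\Delta)$ is automatically cocommutative, since it is isomorphic to the cocommutative comonoid $\mathcal{S}(\mbf{q}^*)$; this cocommutativity is the dual reflection of the commutativity of $\Box$.

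The main obstacle is the first step: pinning down the cocommutative version of the standard cofree-iff-dual-free duality, which amounts to checking that dualizing $\mathcal{S}(\mbf{q})$ interchanges ordered and unordered set partitions correctly and that the symmetric-group symmetrization built into $\mathcal{S}$ is exactly what is dual to the commutative product. Once that bookkeeping is in place, everything else is a formal consequence of Theorem \ref{thm:duality} and the unique unordered factorization hypothesis, exactly paralleling the non-commutative case.
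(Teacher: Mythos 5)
Your proposal is correct and takes essentially the same route as the paper: the paper disposes of this theorem (like its non-commutative counterpart) by combining Theorem \ref{thm:duality}, the observation that the unique unordered factorization property makes $(\kk\mbf{C},\Box)$ free commutative (i.e.\ isomorphic to $\mathcal{S}(\mbf{q})$ for $\mbf{q}$ the $\Box$-indecomposables), and the standard fact that a comonoid is cofree cocommutative exactly when its dual monoid is free commutative. Your write-up simply makes explicit the dualization bookkeeping that the paper compresses into the phrase ``as before.''
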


\begin{example} We can now prove that the Hopf monoid of graphs is cofree cocommutative. By the definition of the free product of graphs, we have
 \[ (G^c \Osq H^c)^c = G \sqcup H.\]
This means that if we decompose a graph $G$ into its connected components $G = G_1 \sqcup G_2 \sqcup \cdots \sqcup G_k$ then, 
 \[G^c = G_1^c \Osq G_2^c \Osq \cdots \Osq G_k^c.\]
Since every graph has a unique unordered decomposition into connected components, we see that graphs have the unique unordered factorization property. This tell us that $\kk\mbf{G}$ is cofree cocommutative.
\end{example}

\subsection{Fock Functor}
Crapo and Schmitt's observation concerned Hopf algebras, not Hopf monoids. We now show that the Fock functor allows us to transfer our method to Hopf algebras.

Recall that a combinatorial Hopf algebra $H$ is a connected graded vector space equipped with two linear maps $m: H \otimes H \to H$ and $\Delta: H \to H \otimes H$ which satisfy axioms similar to those of a Hopf monoid. The (first) Fock functor $\mathcal{F}$ is a functor from the category of Hopf monoids to the category of Hopf algebras given by
 \[ \mathcal{F}(\mbf{H}) = \bigoplus_{n \in \mbf{N}} \mbf{H}[ \{1, 2, \ldots, n\}]_{S_n}\]
where the subscript denotes that we are passing to the $S_n$-coinvariants where the action is the relabelling action of the species. 

If we denote the Hopf monoid by $\mbf{H}$ then we will denote the image of the Fock functor by the notation $H$. Let $[x] \in H$ denote the equivalence class of $x \in \mbf{H}[\{1,2,\cdots,n\}]$ under the relabelling action. The multiplication and comultiplication of $H$ is given by
 \[m([x],[y]) = [x \cdot y],\]
and
 \[ \Delta([x]) = \sum_{S \sqcup T = \{1,\cdots n \}} [\Delta_{S,T}(x)]\]
The naturality axiom of Hopf monoids ensures this is well-defined.

The linearized basis of $\mbf{H}[I]$ is mapped onto a basis of $H_n$ for $|I| = n$. We call this the \textbf{linearized basis} of $H$. This basis has a poset structure given by $[x] \leq [y]$ if there are representatives $x' \in [x]$ and $y' \in [y]$ such that $x' \leq y'$ in $\mbf{H}[\{1,2,\cdots,n\}]$. 

In terms of primitives, the Fock functor behaves very nicely. Recall that the primitives of a Hopf algebra $H$ are those elements $x$ such that $\Delta(x) = x\otimes 1 + 1 \otimes x$.
\begin{theorem}\label{thm:fock}
Let $H$ be a Hopf algebra that is the image of a Hopf monoid $(\mbf{H},m, \Delta)$ under the Fock functor.  Then the space of primitives of $H_n$ is the image of the space of primitives of $\mbf{H}[\{1,\cdots,n\}]$ under the relabelling action. Further if $(\mbf{H},\Box)$ is a  monoid that forms an adjoint pair $\Delta \dashv \Box$ with $(\mbf{H},\Delta)$, then the space of primitives of $H$ has a basis given by $\{ [\omega_x] \; \lvert \; \text{$x$ indecomposable with respect to $\Box$} \}$.
\end{theorem}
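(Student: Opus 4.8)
The plan is to prove the two assertions in turn: first the identification $\mathcal{P}(H)_n = q\bigl(\mathcal{P}(\kk\mbf{H})[\{1,\dots,n\}]\bigr)$, where $q\colon \kk\mbf{H}[\{1,\dots,n\}]\to H_n$ is the projection to $S_n$-coinvariants defining the Fock functor, and then to combine this with Theorem \ref{thm:prim} to read off the claimed basis. Throughout, $\mathcal{P}(H)_n$ denotes the space of primitives of $H$ in degree $n$.

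One inclusion is immediate from counitality. If $x\in\mbf{H}[\{1,\dots,n\}]$ is primitive, then $\Delta_{S,T}(x)=0$ for all nontrivial $S\sqcup T$, so in $\Delta([x]) = \sum_{S\sqcup T}[\Delta_{S,T}(x)]$ only the two trivial decompositions survive, giving $\Delta([x]) = [x]\otimes 1 + 1\otimes[x]$; hence $q(\mathcal{P}(\kk\mbf{H})[n])\subseteq\mathcal{P}(H)_n$. The reverse inclusion is the substantive point, and I would obtain it from two structural properties of the Fock functor. View the coproduct as a morphism of vector species from $\kk\mbf{H}$ to the product species whose $I$-component is $\bigoplus_{S\sqcup T=I}\kk\mbf{H}[S]\otimes\kk\mbf{H}[T]$, and let $\bar{\Delta}$ be its reduced part landing in the summands with both blocks nonempty, so that $\mathcal{P}(\kk\mbf{H})=\ker\bar{\Delta}$. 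First, $\mathcal{F}$ is exact, since over a field of characteristic $0$ passage to $S_n$-coinvariants agrees with invariants via averaging and is therefore exact. Second, $\mathcal{F}$ is symmetric monoidal: it carries the product species to the graded tensor product and matches $\bar{\Delta}$ with the reduced coproduct $\bar{\Delta}_H$ of $H$, so $\mathcal{F}(\bar{\Delta})=\bar{\Delta}_H$. Granting these, exactness lets $\mathcal{F}$ commute with the kernel, yielding
\[ q\bigl(\mathcal{P}(\kk\mbf{H})[n]\bigr) = \mathcal{F}(\ker\bar{\Delta})_n = \ker(\bar{\Delta}_H)_n = \mathcal{P}(H)_n, \]
which is the first assertion. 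The main obstacle is precisely the identification $\mathcal{F}(\bar{\Delta})=\bar{\Delta}_H$: one must check that the shuffle bookkeeping built into $\Delta([x]) = \sum_{S\sqcup T}[\Delta_{S,T}(x)]$ is exactly the monoidal-structure compatibility, which is where I would either invoke the known results of Aguiar--Mahajan on the full Fock functor or verify the compatibility directly.

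With the first assertion in hand, I would deduce the basis as follows. Assume $\Delta\dashv\Box$. By Theorem \ref{thm:prim}, the subspace $\mathcal{P}(\kk\mbf{H})[n]$ has basis $B=\{\omega_x : x\in\mbf{H}[n]\text{ is }\Box\text{-indecomposable}\}$. I claim $B$ is a permutation basis for the relabelling action. Indeed, each relabelling $\mbf{H}[\sigma]$ is an order isomorphism and so preserves Möbius values, giving $\sigma\cdot\omega_x = \omega_{\sigma\cdot x}$; moreover $\Box$-indecomposability is preserved under relabelling by the naturality axiom of $\Box$, so $\sigma$ permutes $B$ according to its action on the set of $\Box$-indecomposable elements. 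Since $\mathcal{P}(\kk\mbf{H})[n]$ is an $S_n$-subrepresentation of $\kk\mbf{H}[n]$ and $\mathrm{char}\,\kk=0$, the restriction of $q$ has image canonically isomorphic to the coinvariants $(\mathcal{P}(\kk\mbf{H})[n])_{S_n}$, and the coinvariants of a permutation module has a basis indexed by the orbits, namely the distinct classes $[\omega_x]$.

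Combining the two parts then finishes the proof: $\mathcal{P}(H)_n$ equals $q(\mathcal{P}(\kk\mbf{H})[n])$ by the first assertion, and the latter has basis $\{[\omega_x]\}$ with $x$ ranging over orbit representatives of the $\Box$-indecomposable elements. Thus $\{[\omega_x] \mid x\text{ indecomposable with respect to }\Box\}$ is a basis for the primitives of $H$, as claimed. I expect the only genuinely delicate point to be the monoidality/exactness step of the converse in the first assertion; the deduction of the basis is then a routine semisimplicity-plus-permutation-module argument built on Theorem \ref{thm:prim}.
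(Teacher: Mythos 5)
Your proposal is correct, and it takes a genuinely different and noticeably more careful route than the paper's own proof. The paper argues element-wise: writing $[p]$ for the class of $p\in\mbf{H}[\{1,\dots,n\}]$, it notes via counitality that if $p$ is primitive then $\Delta([p])=[p]\otimes 1+1\otimes[p]$, asserts the equivalence ``$p$ is primitive if and only if $[p]$ is primitive,'' and then cites Theorem \ref{thm:prim}. That short argument really only establishes your easy inclusion $q\bigl(\mathcal{P}(\kk\mbf{H})[n]\bigr)\subseteq\mathcal{P}(H)_n$: taken literally, the converse implication fails for arbitrary representatives (for instance $p=x-\sigma\cdot x$ has $[p]=0$ primitive while $p$ itself need not be), and the paper never addresses why the classes $[\omega_x]$ remain linearly independent after passing to coinvariants. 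Your two structural inputs supply exactly these missing pieces: exactness of $S_n$-coinvariants in characteristic $0$ together with the identification $\mathcal{F}(\bar{\Delta})=\bar{\Delta}_H$ yields the reverse inclusion $\mathcal{P}(H)_n\subseteq q\bigl(\mathcal{P}(\kk\mbf{H})[n]\bigr)$, and the observation that $\sigma\cdot\omega_x=\omega_{\sigma\cdot x}$ exhibits $\{\omega_x\}$ as a permutation basis of $\mathcal{P}(\kk\mbf{H})[n]$, whose coinvariants have the orbit classes $[\omega_x]$ as a basis. The single step you leave open, the monoidal compatibility, is genuinely fine: since $S_n$ acts transitively on the decompositions $S\sqcup T=\{1,\dots,n\}$ of a fixed bidegree $(i,j)$ with stabilizer $S_i\times S_j$, one gets $\bigl(\kk\mbf{H}\cdot\kk\mbf{H}\bigr)[n]_{S_n}\cong\bigoplus_{i+j=n}H_i\otimes H_j$ identifying $\mathcal{F}(\bar{\Delta})$ with the reduced coproduct of $H$; this is the standard Aguiar--Mahajan bosonic Fock functor bookkeeping you name. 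An alternative repair closer to the paper's style is averaging: a primitive of $H_n$ admits an $S_n$-invariant lift $\tilde{p}$, naturality forces all $\Delta_{S,T}(\tilde{p})$ in a fixed bidegree to have equal classes, so each class vanishes, and an invariant element with vanishing class is zero in characteristic $0$. In short, your route costs an appeal to functorial properties of $\mathcal{F}$ but buys rigor at precisely the two points the paper's brevity glosses over.
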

 
\begin{proof} Let $p \in \mbf{H}[\{1,\cdots,n\}]$, then $p$ is primitive in the Hopf monoid if and only if
 \[\Delta([p]) = \sum_{S,T} [\Delta_{S,T}(p)] = \Delta_{\{1,\cdots,n\}, \emptyset}(p) + \Delta_{\emptyset,\{1,\cdots,n\}}(p). \] 
 Then, by the counitality axiom,
 \[\Delta([p]) = [p] \otimes 1 + 1 \otimes [p].\]
Hence $p$ is primitive if and only if $[p]$ is primitive. Then this follows from Theorem \ref{thm:prim}.
\end{proof} 
 
\begin{example} The Hopf monoid of graphs $\mbf{Graphs}$ has a poset structure given by edge-inclusion. This induces a poset structure on the Hopf algebra $\mathcal{F}(\mbf{Graphs})$ of unlabelled graphs which is $[G_1] \leq [G_2]$ if there exists a labelling such that $G_1 \leq G_2$.

We have seen that the graphs that are $\Box$-indecomposable are the graphs whose complement is connected. Let $G$ be such a graph, then the following is a primitive of $\mathcal{F}(\mbf{Graphs})$
  \[ [\omega_{G}] = \sum_{G \leq G'} (-1)^{|E(G') - E(G)|} [G]. \]
where the sum is not over equivalence classes but over elements in the poset of $\mbf{G}[I]$
 \end{example}

Something curious occurs here. In order to study the primitives of a Hopf algebra using M\"obius inversion, we need use the poset of the Hopf monoid. This is
 \[ [\omega_x] = \sum_{x \leq y} \mu(x,y) [y].\]
This is \textbf{not} using the induced poset on the basis of the Hopf algebra. In the previous example, the sum
 \[\sum_{[G] \leq [H]} \mu([G],[H]) [H] \]
using the induced poset on graphs is not a primitive element. This further justifies the need for species.

\section{Self-Adjoint Hopf Monoids}

We now turn our attention to the special case where a monoid $\mbf{M}$ and a comonoid $\mbf{C}$ are compatible in the sense of Hopf algebras and also form an adjoint pair. We give a full classification of these objects; they are the linearized, commutative, and cocommutative Hopf monoids. Further, we give a description of their antipode for these objects in terms of the characteristic polynomial of an underlying poset.

\subsection{Reassembly Poset} In a paper on Hopf monoid duality, Eric Marberg constructed the following poset related to commutative and cocommutative Hopf monoids. 

\begin{theorem}\cite{Marberg}\label{thm: Marberg Poset}
Let $(\mbf{H},m,\Delta)$ be a commutative and cocommutative set Hopf monoid. Let $\leq_r$ be the binary relation given by
 \[ x\leq_r y \iff y = m_{S_1,\ldots,S_k}\circ\Delta_{S_1,\ldots,S_k}(x)\]
for $x,y \in \mbf{H}[I]$ and for some set partition $S_1 \sqcup \cdots \sqcup S_k$. Then, $\leq_r$ is a poset on $\mbf{H}[I]$.

Further, this poset has the following two properties.
\begin{enumerate}
    \item $x \leq_r m_{S,T}(y_1,y_2)$ if and only if $\Delta_{S,T}(x) = (x_1, x_2)$ for some $x_1 \leq_r y_1$ and $x_2 \leq_r y_2$.
    \item $m_{S,T}(x_1,x_2) \leq_r y$ if and only if $y = m_{S,T}(y_1,y_2)$ for some $y_1 \leq_r x_1$ and $y_2 \leq_r x_2$.
\end{enumerate}
\end{theorem}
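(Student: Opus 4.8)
The plan is to reduce everything to a single composition law for the reassembly operators. For an (unordered) set partition $\pi = \{S_1,\dots,S_k\}$ of $I$, write $R_\pi = m_{S_1,\dots,S_k}\circ\Delta_{S_1,\dots,S_k}$, so that $x \leq_r y$ means $y = R_\pi(x)$ for some $\pi$. First I would check that $R_\pi$ depends only on the unordered partition $\pi$: reordering the blocks permutes the factors in both $m_{S_1,\dots,S_k}$ and $\Delta_{S_1,\dots,S_k}$, and commutativity of $m$ together with cocommutativity of $\Delta$ shows the composite is unchanged. This is precisely where both hypotheses on $\mathbf{H}$ enter.

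The crux is the identity $R_\sigma \circ R_\pi = R_{\pi \wedge \sigma}$, where $\pi \wedge \sigma$ denotes the common refinement (the meet in the partition lattice, whose blocks are the nonempty $S_i \cap T_j$). To prove it I would expand $R_\sigma \circ R_\pi = m_\sigma \circ \Delta_\sigma \circ m_\pi \circ \Delta_\pi$ and rewrite the middle composite $\Delta_\sigma \circ m_\pi$ using the compatibility (Hopf) axiom. Iterating compatibility shows that $\Delta_\sigma\circ m_\pi$ first splits each $\mathbf{H}[S_i]$-factor into its pieces supported on the sets $S_i \cap T_j$ and then regroups these pieces by $j$; the braiding appearing in the compatibility square is absorbed using commutativity and cocommutativity. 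Coassociativity then identifies the pieces produced by $\Delta_\sigma\circ m_\pi\circ\Delta_\pi$ with those of $\Delta_{\pi\wedge\sigma}$, and associativity of $m$ identifies the final regrouping with $m_{\pi\wedge\sigma}$, giving $R_{\pi\wedge\sigma}$. I expect this to be the main obstacle, since it requires organizing the iterated compatibility diagram and tracking the permutation of tensor factors carefully.

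Granting the composition law, the poset axioms are formal. Reflexivity follows from $R_{\hat 1} = \mathrm{id}$ for the one-block partition $\hat 1 = \{I\}$, since $\Delta_{\{I\}}$ and $m_{\{I\}}$ are identities. Transitivity is immediate: if $y = R_\pi(x)$ and $z = R_\sigma(y)$ then $z = R_\sigma(R_\pi(x)) = R_{\pi\wedge\sigma}(x)$, so $x \leq_r z$. For antisymmetry, suppose $y = R_\pi(x)$ and $x = R_\sigma(y)$; then $x = R_\sigma(R_\pi(x)) = R_{\pi\wedge\sigma}(x)$, and substituting this back gives $y = R_\pi(x) = R_\pi(R_{\pi\wedge\sigma}(x)) = R_{\pi\wedge\sigma}(x) = x$, using only that the meet is idempotent and associative. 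Thus $\leq_r$ is a poset, and as a byproduct each $R_\pi$ is idempotent and the $R_\pi$ commute under composition.

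For properties (1) and (2) I would record how reassembly interacts with a single product. From $\Delta_{S,T}\circ m_{S,T} = \mathrm{id}$ (a special case of compatibility combined with counitality), the element $m_{S,T}(a,b)$ is fixed by $R_{\{S,T\}}$, so the composition law gives $R_\pi(m_{S,T}(a,b)) = R_{\pi\wedge\{S,T\}}(m_{S,T}(a,b))$; since $\pi\wedge\{S,T\}$ respects the splitting $I = S \sqcup T$, coassociativity and associativity reduce this to $R_\pi(m_{S,T}(a,b)) = m_{S,T}(R_{\pi|_S}(a),\,R_{\pi|_T}(b))$, where $\pi|_S$ has blocks $\{B\cap S : B\in\pi\}$. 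Property (2) then follows by unwinding the definition of $\leq_r$: a reassembly of $m_{S,T}(x_1,x_2)$ is exactly a product of reassemblies of the $x_i$, so $m_{S,T}(x_1,x_2)\le_r y$ is equivalent to $y$ being such a product with each factor related to the corresponding $x_i$. Property (1) follows from the coproduct-dual statement (reverse the roles of $m$ and $\Delta$), which characterizes the reassemblies of $x$ lying below a product $m_{S,T}(y_1,y_2)$ in terms of $\Delta_{S,T}(x)$. The one genuinely fiddly point here is bookkeeping the inequality directions against the defining relation $x \le_r R_\pi(x)$, which I would pin down explicitly in each of the two equivalences.
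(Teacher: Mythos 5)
You should first know that the paper does not actually prove this theorem: it is quoted from Marberg \cite{Marberg} and used as a black box, so there is no internal proof to compare against. Judged on its own merits, your route is correct and would yield a complete, self-contained proof once the one technical step is written out. The composition law $R_\sigma\circ R_\pi = R_{\pi\wedge\sigma}$ (blocks of $\pi\wedge\sigma$ the nonempty $S_i\cap T_j$; note that under the paper's own ordering convention for $\mbf{Par}$, where coarser partitions are smaller, the common refinement is the join rather than the meet, but your explicit description of the blocks removes any ambiguity) is exactly the right key lemma. It does follow from iterated compatibility together with associativity and coassociativity; commutativity and cocommutativity enter both to make $R_\pi$ independent of the ordering of the blocks and to identify the final products taken in different orders, while the braiding itself is essentially canonical in the species setting since the factors live in $\mbf{H}[S_i\cap T_j]$ for distinct sets. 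Your derivations of reflexivity (via $R_{\{I\}}=\mathrm{id}$), transitivity, antisymmetry, idempotence, and of properties (1) and (2) from the identity $R_\pi(m_{S,T}(a,b)) = m_{S,T}\bigl(R_{\pi|_S}(a),R_{\pi|_T}(b)\bigr)$ and its coproduct analogue $\Delta_{S,T}(R_\pi(x)) = \bigl(R_{\pi|_S}(x_1),R_{\pi|_T}(x_2)\bigr)$, where $(x_1,x_2)=\Delta_{S,T}(x)$, are all sound.

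The one point you deferred, the ``fiddly bookkeeping'' of inequality directions, deserves to be settled explicitly, because it is not a defect of your proof but of the printed statement. Your identity gives, for property (2): $m_{S,T}(x_1,x_2)\leq_r y$ if and only if $y = m_{S,T}(y_1,y_2)$ with $x_1\leq_r y_1$ and $x_2\leq_r y_2$ --- the opposite direction from the ``$y_1\leq_r x_1$'' in the theorem as stated. The printed version cannot be correct: combining it with your identity, any $y$ with $m_{S,T}(x_1,x_2)\leq_r y$ would satisfy $y\leq_r m_{S,T}(x_1,x_2)$ as well, hence equal $m_{S,T}(x_1,x_2)$ by antisymmetry, which fails already for graphs (a disjoint union of two edges lies strictly below the edgeless graph in $\leq_r$). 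Moreover, the paper itself later uses property (2) in your corrected form, in the proof that $\omega_x\cdot\omega_y = \omega_{x\cdot y}$, where $x\cdot y\leq_r z$ is said to produce $z = m_{S,T}(z_1,z_2)$ with $(x,y)\leq_r(z_1,z_2)$. So when you pin down the directions, you should state and prove (2) exactly as your computation gives it, and note that the theorem statement contains a typo rather than trying to force your argument to match it.
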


We now show that this makes $\mbf{H}$ into a self-adjoint poset Hopf monoid. We call this the \textbf{reassembly poset} of a commutative and cocommutative Hopf monoid.

\begin{theorem} Let $(\mbf{H},m,\Delta)$ be a commutative and cocommutative set Hopf monoid. Then $\mbf{H}$ is a poset Hopf monoid with the reassembly poset structure on $\mbf{H}[I]$ given above. Further, it is self-adjoint with $\Delta \dashv m$.
\end{theorem}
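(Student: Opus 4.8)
The plan is to lean entirely on Marberg's theorem (Theorem~\ref{thm: Marberg Poset}), which already guarantees that $\leq_r$ is a poset on each $\mbf{H}[I]$ and supplies the two decomposition properties. Every Hopf-monoid axiom that does not mention the order---naturality, unitality, associativity, coassociativity, and compatibility---is inherited verbatim from the set Hopf monoid $(\mbf{H},m,\Delta)$, so the only genuine content is that $\leq_r$ is compatible with the structure maps. Concretely I must verify three order-preservation statements: that each relabelling bijection $\mbf{H}[f]$ is order-preserving (so that $\mbf{H}$ is a poset species), and that $m_{S,T}$ and $\Delta_{S,T}$ are order-preserving (so that we obtain a poset monoid and a poset comonoid). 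Self-adjointness is then exactly the assertion that $\Delta\dashv m$ is a Galois connection.

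I would dispatch relabelling first. If $x\leq_r y$ in $\mbf{H}[I]$, then $y=m_{S_1,\ldots,S_k}\circ\Delta_{S_1,\ldots,S_k}(x)$ for some set partition $S_1\sqcup\cdots\sqcup S_k=I$; applying $\mbf{H}[f]$ and invoking the naturality of $m$ and $\Delta$ rewrites this as $\mbf{H}[f](y)=m_{f(S_1),\ldots,f(S_k)}\circ\Delta_{f(S_1),\ldots,f(S_k)}(\mbf{H}[f](x))$, so that $\mbf{H}[f](x)\leq_r\mbf{H}[f](y)$ and $\mbf{H}$ is a poset species.

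The heart of the argument is the Galois connection, and here property~(1) of Marberg's theorem does almost all the work once it is read correctly. Fix $x\in\mbf{H}[I]$, $y\in\mbf{H}[S]$, $z\in\mbf{H}[T]$ and write $\Delta_{S,T}(x)=(x_1,x_2)$; because $\mbf{H}$ is a \emph{set} comonoid this pair is a single well-defined element of the product poset $\mbf{H}[S]\times\mbf{H}[T]$. Property~(1) says that $x\leq_r m_{S,T}(y,z)$ holds precisely when these uniquely determined components satisfy $x_1\leq_r y$ and $x_2\leq_r z$, and this last pair of inequalities is exactly the assertion $\Delta_{S,T}(x)\leq(y,z)$ in the product order. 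Hence
\[ \Delta_{S,T}(x)\leq(y,z)\iff x\leq_r m_{S,T}(y,z),\]
which is the defining biconditional of $\Delta\dashv m$.

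It remains to check that $m_{S,T}$ and $\Delta_{S,T}$ are themselves order-preserving, and this now follows formally: a biconditional of the form $f(x)\leq w\iff x\leq g(w)$ forces both $f$ and $g$ to be monotone by the usual adjunction argument (apply the biconditional to $g(w)\leq g(w)$ to obtain $f(g(w))\leq w$, and to $f(x)\leq f(x)$ to obtain $x\leq g(f(x))$, then compose each with a given inequality). With $f=\Delta_{S,T}$ and $g=m_{S,T}$ this yields monotonicity of both maps, completing the verification that $\mbf{H}$ is a poset monoid and a poset comonoid, hence a poset Hopf monoid, and that $(\mbf{H},m)$ and $(\mbf{H},\Delta)$ form an adjoint pair with $\Delta\dashv m$. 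I do not anticipate a real obstacle: Marberg's theorem has already absorbed the combinatorial difficulty, so the only thing to get right is the bookkeeping in matching property~(1) to the product-poset order, together with the uniqueness of $\Delta_{S,T}(x)$ that comes from working with a set (rather than merely linearized) comonoid.
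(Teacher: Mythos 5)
Your proposal is correct, but it diverges from the paper's proof in one substantive step: how order-preservation of $m_{S,T}$ and $\Delta_{S,T}$ is established. The paper verifies monotonicity of $m$ directly and combinatorially: given $x_1 \leq_r y_1$ and $x_2 \leq_r y_2$, it writes each $y_i$ as $m\circ\Delta$ applied to $x_i$ over some set partition and then invokes the Hopf \emph{compatibility} axiom to merge the two expressions into a single reassembly expression $y_1\cdot y_2 = m_{X_1,\ldots,X_m}\circ\Delta_{X_1,\ldots,X_m}(x_1\cdot x_2)$, with ``a similar argument'' asserted for $\Delta$; only afterwards does it observe that the Galois biconditional is a rewording of Marberg's property (1). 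You instead extract the biconditional $\Delta_{S,T}(x)\leq (y,z)\iff x\leq_r m_{S,T}(y,z)$ from property (1) first (correctly noting that this hinges on $\Delta_{S,T}(x)$ being a single well-defined element, since $\mbf{H}$ is a set Hopf monoid), and then deduce monotonicity of both maps purely formally from the adjunction identities $x\leq g(f(x))$ and $f(g(w))\leq w$. This order-theoretic fact is standard and your application of it is sound, given that Marberg's theorem already guarantees $\leq_r$ is a poset (so reflexivity and transitivity are available) and that the product order makes $\mbf{H}[S]\times\mbf{H}[T]$ a poset. Your route is arguably cleaner: it eliminates the paper's unproved ``similar argument'' for $\Delta$ and avoids the set-partition bookkeeping, at the cost of never exhibiting the concrete role of the compatibility axiom in the poset structure, which the paper's computation makes visible (and which is the step where being a \emph{Hopf} monoid, rather than just a monoid and comonoid, actually enters). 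One small point of care: the paper's definition of Galois connection builds order-preservation into the definition, so your formal derivation of monotonicity is not optional decoration but a required step for the adjoint-pair claim to make sense; you do supply it, so the argument is complete. Your treatment of the relabelling maps matches the paper's essentially verbatim (and is in fact slightly more general, handling bijections $f:I\to J$ rather than only automorphisms of $I$).
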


\begin{proof} To prove naturality, let $\gamma: I \to I$ be a set automorphism of $I$. This induces a set map $\mbf{H}[\gamma]: \mbf{H}[I] \to \mbf{H}[I]$. To show that this map is order-preserving, let $x,y \in \mbf{H}[I]$ with $ x\leq_r y$. Then, $y = m_{S_1,\ldots, S_k} \circ\Delta_{S_1,\ldots,S_k}(x)$. Applying $\mbf{H}[\gamma]$ to both sides, we get $\mbf{H}[\gamma](y) = \mbf{H}[\gamma]\circ m_{S_1,\ldots, S_k} \circ \Delta_{S_1,\ldots,S_k}(x)$. By the naturality axiom of $m$ and of $\Delta$ this gives us
 \[ \mbf{H}[\gamma](y) = m_{\gamma(S_1),\ldots, \gamma(S_k)} \circ \Delta_{\gamma(S_1),\ldots \gamma(S_k)} \circ \mbf{H}[\gamma](x),\]
hence $\mbf{H}[\gamma](x) \leq_r \mbf{H}[\gamma](y)$.

Next, we need to show that $m$ and $\Delta$ are both order-preserving. For the multiplication, let $x_1 \leq_r y_1$ in $\mbf{H}[S]$ and $x_2 \leq_ry_2$ in $\mbf{H}[T]$. Then, for some set partitions $S_1 \sqcup \cdots \sqcup S_k = S$ and $T_1 \sqcup \cdots \sqcup T_\ell = T$, 
\begin{align*}
    y_1 &= m_{S_1,\ldots, S_k} \circ \Delta_{S_1,\ldots S_k}(x_1) \\
    y_2 &= m_{T_1, \ldots, S_\ell} \circ \Delta_{T_1, \ldots, T_\ell}(x_2)\\
\end{align*}
This means
 \[ y_1 \cdot y_2 = m_{S_1,\ldots, S_k} \circ \Delta_{S_1,\ldots S_k}(x_1) \cdot m_{T_1, \ldots, T_\ell} \circ \Delta_{T_1, \ldots, T_\ell}(x_2). \]
The compatibility axiom of a Hopf monoid allows us to reformulate this as
 \[y_1 \cdot y_2 = m_{X_1,\ldots,X_m} \circ \Delta_{X_1, \ldots, X_m}(x_1 \cdot x_2), \]
for some other set partition $X_1 \sqcup \cdots \sqcup X_m = I$. This shows that $m$ is order-preserving. A similar argument can be made for the comultiplication map $\Delta$. Therefore, $\mbf{H}$ is a poset Hopf monoid.

We now show that $\Delta \dashv m$. In other words,
\[ \Delta_{S,T}(x) \leq_r (y,z) \iff x \leq_r m_{S,T}(y,z)\]
where the poset on the left-hand-side is the product poset. This is just a rewording of property (1) in the previous theorem by Marberg.
\end{proof}

\begin{example} The set Hopf monoid of graphs $\mbf{Graphs}$ is commutative and cocommutative. This means that the previous theorem gives a poset structure on $\mbf{Graphs}[I]$ where $G \leq_r H$ if and only if $H = G|_{S_1} \sqcup \cdots \sqcup G|_{S_k}$ for some set partitions $S_1 \sqcup \cdots \sqcup S_k = I$.

This is best interpreted in terms of flats. Recall that $H \in \mbf{Graphs}[I]$ is a flat of $G \in \mbf{Graphs}[I]$ if for any edge $e \in E(G)\backslash E(H)$, the graph $E(H) \cup e$ has the same number of components as $H$. Alternatively, if $C$ is a connected component of $H$ with vertex set $S$, then $H|_S= G|_S$. From this second definition, we see that if $H$ is a flat of $G$ and $S_1,\cdots, S_k$ are the vertex sets of the connected components of $G$, then $H = G|_{S_1} \sqcup \cdots \sqcup G|_{S_k}$, so $G \leq_r H$. Conversely, if $G \leq_r H$, then $G$ is a flat of $H$. 

Geometrically, given a graph $G \in \mbf{Graphs}[I]$, the graphic hyperplane arrangement is the arrangement of hyperplanes $\{x_i = x_j\}$ for $(i,j) \in E(G)$. The intersection lattice of this arrangement is isomorphic to the poset of flats of $G$ under reverse edge inclusion. This means that if  $\emptyset$ denotes the empty graph, then the interval $[G, \emptyset]$ in the poset $\mbf{Graphs}[I]$ is isomorphic to the intersection lattice of the hyperplane arrangement of $G$. Further, the interval $[G,H]$ in the reassembly poset can be interpreted as the set of all flats of $G$ containing $H$. Thus this poset is isomorphic to the poset $[G/H,\emptyset]$ where $G/H$ is the graph obtained by contracting $H$ in $G$.
\end{example}

\subsection{Classification of Self-Adjoint Hopf Monoids}
We now prove the converse to the previous theorem. We show that every self-adjoint Hopf monoid is commutative and cocommutative.

We begin with a straight-forward extension of a result of Zelevinsky \cite{zelevinsky2006representations} on Hopf algebras to the Hopf monoid case.

\begin{lemma} Let $\mbf{H}$ be a self-adjoint poset Hopf monoid. Let $\mbf{d}$ be the subspecies of $\kk \mbf{H}$ where $\mbf{d}[I] = \{x \in \kk\mbf{H}[I] \; \lvert \; x = y \cdot z \; \text{for some y,z}\}$. Let $\mbf{p}$ be the subspecies of primitives of $\mbf{H}$. Then,
 \[\mbf{H} = \mbf{p} \oplus \mbf{d}. \]
\end{lemma}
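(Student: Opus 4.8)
The plan is to describe both summands explicitly in terms of the linearized and inverted bases of $\kk\mbf{H}[I]$ and then to exhibit a single basis of $\kk\mbf{H}[I]$ that splits cleanly into the two pieces. First I would observe that, because $\mbf{H}$ is a poset Hopf monoid, its multiplication sends a pair of basis elements to a single basis element; hence the subspace of decomposables $\mbf{d}[I]$ is nothing but the span of the decomposable linearized basis elements,
\[\mbf{d}[I] = \operatorname{span}\{\, x \in \mbf{H}[I] \;\lvert\; x \text{ is } m\text{-decomposable}\,\}.\]
On the other side, self-adjointness gives the adjoint pair $\Delta \dashv m$, so Theorem \ref{thm:prim} applies verbatim (with $\Box = m$) and identifies the primitives as
\[\mbf{p}[I] = \mathcal{P}(\kk\mbf{H})[I] = \operatorname{span}\{\, \omega_x \;\lvert\; x \in \mbf{H}[I] \text{ is } m\text{-indecomposable}\,\}.\]
Thus $\mbf{p}[I]$ is spanned by the \emph{inverted} basis vectors indexed by the indecomposables, while $\mbf{d}[I]$ is spanned by the \emph{linearized} basis vectors indexed by the decomposables.

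The key point is that $\mbf{H}[I]$ is the disjoint union of its indecomposable and decomposable elements, so I would combine the two spanning sets into the single family
\[\mathcal{B} = \{\,\omega_x \;\lvert\; x \text{ indecomposable}\,\} \cup \{\, x \;\lvert\; x \text{ decomposable}\,\},\]
which has exactly $\lvert \mbf{H}[I]\rvert$ elements, and show it is a basis. For this I would fix a linear extension $y_1, \ldots, y_N$ of the poset $\mbf{H}[I]$ and use the M\"obius triangularity $\omega_{y_i} = y_i + \sum_{y_i < y_j} \mu(y_i,y_j)\, y_j$: every member of $\mathcal{B}$ — whether an $\omega_{y_i}$ or a bare $y_j$ — has its own index as its lowest-indexed term, and since the indecomposables and decomposables partition $\mbf{H}[I]$ each index occurs exactly once. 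Hence the change-of-basis matrix from $\mathcal{B}$ to the linearized basis is unitriangular, so $\mathcal{B}$ is a basis. Because $\mathcal{B}$ splits as a disjoint union of the spanning set of $\mbf{p}[I]$ and the spanning set of $\mbf{d}[I]$, this immediately yields $\kk\mbf{H}[I] = \mbf{p}[I] \oplus \mbf{d}[I]$. Finally, both $\mbf{p}$ and $\mbf{d}$ are subspecies, since the primitives and the decomposables are preserved by the relabelling action, so these pointwise decompositions assemble into the species identity $\mbf{H} = \mbf{p} \oplus \mbf{d}$.

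I expect the main conceptual obstacle to be recognizing that the two summands are naturally expressed in two \emph{different} bases — decomposables in the linearized basis, primitives in the inverted basis — and that it is precisely the M\"obius triangularity between these bases that forces them to be complementary. A tempting alternative is to argue that $\mbf{p}[I]$ and $\mbf{d}[I]$ are orthogonal complements under the zeta bilinear form; this does show they are mutually orthogonal and have complementary dimensions, but the zeta form is neither symmetric nor definite, so orthogonality alone cannot rule out a nonzero isotropic intersection $\mbf{p}[I] \cap \mbf{d}[I]$. The triangularity argument above avoids this gap entirely, which is why I would route the proof through the inverted basis rather than through the bilinear form.
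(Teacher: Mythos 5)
Your proposal is correct, but it takes a genuinely different route from the paper's. The paper argues through the zeta bilinear form: self-adjointness gives $\langle m_{S,T}(x,y), z\rangle = \langle x \otimes y, \Delta_{S,T}(z)\rangle$, so $\mbf{p}[I]$ is identified as the orthogonal complement of $\mbf{d}[I]$; then, because the zeta form is not symmetric, the paper explicitly closes the very gap you flag in your last paragraph by proving that the form restricted to $\mbf{p}[I]$ is non-degenerate, using the fact that its Gram matrix on the primitive basis is $\langle \omega_x, \omega_y \rangle = \mu(x,y)$, which is unitriangular. So the ``tempting alternative'' you dismiss is only the naive version of the paper's argument; the actual proof supplements orthogonality with exactly the same M\"obius unitriangularity that powers your change-of-basis argument --- the two proofs rest on the same two inputs (Theorem \ref{thm:prim} and triangularity of the M\"obius matrix), packaged differently. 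Your route is more elementary and self-contained: it never invokes the bilinear form, needing only that the multiplication is linearized (so that $\mbf{d}[I]$ is the span of the decomposable basis vectors), Theorem \ref{thm:prim}, and a linear extension of the poset, and it sidesteps all subtleties about non-symmetric forms. What the paper's formulation buys is the statement $\mbf{p} = \mbf{d}^{\perp}$ itself, which is of independent interest, and it stays parallel to the Zelevinsky Hopf-algebra argument it extends. One minor caveat, shared with the paper's own write-up: self-adjointness permits either $\Delta \dashv m$ or $m \dashv \Delta$, and you assume the former; in the latter case Theorem \ref{thm:prim} applies after reversing the poset order, i.e.\ with $\omega^x = \sum_{y \leq x} \mu(y,x)\, y$ in place of $\omega_x$, and your triangularity argument goes through verbatim with ``highest-indexed term'' replacing ``lowest-indexed term.''
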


\begin{proof} Since $\mbf{H}$ is a self-adjoint Hopf monoid, the zeta bilinear form $\langle-,-\rangle$ satisfies
 \[ \langle m_{S,T}(x,y), z\rangle = \langle x\otimes y, \Delta_{S,T}(z) \rangle. \]
We calculate the (left)-orthogonal complement to $\mbf{d}$. An element $z$ is orthogonal to $\mbf{d}$ if and only if for all $S \sqcup T$ and $x \in \mbf{H}[S]$, $y \in \mbf{H}[T]$  we have
 \[ 0 = \langle m_{S,T}(x,y), z \rangle= \langle x\otimes y, \Delta(z)\rangle. \]
By non-degeneracy of the zeta bilinear form, the right-hand side is zero for all $y \otimes z$ if and only if $\Delta_{S,T}(z) = 0$. In other words, the orthogonal complement of $\mbf{d}$ is $\mbf{p}$. We are not done since the bilinear form is non-symmetric. To finish, we need to show that the bilinear form is non-degenerate when it is restricted to $\mbf{d}^\perp = \mbf{p}$.

To do this, recall that a basis of $\mbf{p}[I]$ is given by elements of the form $\omega_{x}$ for certain $x$. Since the zeta bilinear form on these basis elements is given by $\langle \omega_x, \omega_y \rangle = \mu(x,y)$. This means that the matrix of the bilinear form restricted to $\mbf{p}$ is triangular, hence invertible. This means that $\mbf{H} = \mbf{p} \oplus \mbf{d}$.
\end{proof}

\begin{lemma} Let $\mbf{H}$ be a self-adjoint poset Hopf monoid. Then, $\kk \mbf{H}$ is commutative and cocommutative.
\end{lemma}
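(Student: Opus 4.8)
The plan is to prove the two halves separately and to use commutativity to bootstrap cocommutativity. Since reversing the order on $\mbf{H}$ turns an adjoint pair $m \dashv \Delta$ into one of the form $\Delta \dashv m$ while leaving the maps $m$ and $\Delta$ themselves untouched — and hence leaving the notions of commutativity and cocommutativity of $\kk\mbf{H}$ unchanged — I may assume throughout that the self-adjoint structure is $\Delta \dashv m$, i.e. $\Delta_{S,T}(x) \leq (y,z) \iff x \leq m_{S,T}(y,z)$ for all $x \in \mbf{H}[I]$, $y \in \mbf{H}[S]$, $z \in \mbf{H}[T]$.

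First I would record a purely formal consequence of the compatibility axiom, valid in any connected Hopf monoid and using nothing about the poset: for $y \in \mbf{H}[S]$ and $z \in \mbf{H}[T]$,
\[ \Delta_{S,T}(m_{S,T}(y,z)) = y \otimes z \qquad\text{and}\qquad \Delta_{T,S}(m_{S,T}(y,z)) = z \otimes y. \]
Both follow by instantiating the compatibility square with $S_1 = S$, $S_2 = T$ and with $(T_1,T_2) = (S,T)$, respectively $(T_1,T_2) = (T,S)$. In each case two of the four intersection blocks $A,B,C,D$ are empty, so $\Delta_{A,B}$ and $\Delta_{C,D}$ reduce by counitality to insertions of the unit, the braiding acts only on trivial factors, and the outer products $m_{A,C}, m_{B,D}$ collapse by unitality. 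This is routine and I would not belabor it; the upshot is that $\Delta_{S,T} \circ m_{S,T} = \mathrm{id}$ on $\mbf{H}[S] \times \mbf{H}[T]$, together with its order-swapped variant.

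Next, commutativity. Fix $I = S \sqcup T$, $y \in \mbf{H}[S]$, $z \in \mbf{H}[T]$, and set $u = m_{S,T}(y,z)$ and $v = m_{T,S}(z,y)$; the goal is $u = v$. Applying the identities above to $u$ yields $\Delta_{T,S}(u) = z \otimes y = (z,y)$, and applying them to $v$ (with the roles of $S,T$ and of $y,z$ exchanged) yields $\Delta_{S,T}(v) = y \otimes z = (y,z)$. I then feed these into the two Galois connections. From $\Delta_{T,S} \dashv m_{T,S}$ and $\Delta_{T,S}(u) = (z,y) \leq (z,y)$ we obtain $u \leq m_{T,S}(z,y) = v$; from $\Delta_{S,T} \dashv m_{S,T}$ and $\Delta_{S,T}(v) = (y,z) \leq (y,z)$ we obtain $v \leq m_{S,T}(y,z) = u$. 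Hence $u = v$, which is exactly commutativity of $\kk\mbf{H}$. Finally, for cocommutativity I invoke uniqueness of adjoints. By hypothesis $\Delta_{S,T}$ is the left adjoint of $m_{S,T}$; I claim $\beta \circ \Delta_{T,S}$ is also a left adjoint of $m_{S,T}$. Writing $\Delta_{T,S}(x) = (r,s)$, one has $\beta\Delta_{T,S}(x) \leq (y,z) \iff (s,r) \leq (y,z) \iff \Delta_{T,S}(x) \leq (z,y) \iff x \leq m_{T,S}(z,y)$, and by the commutativity just proved $m_{T,S}(z,y) = m_{S,T}(y,z)$, so this is equivalent to $x \leq m_{S,T}(y,z)$. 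Thus $\beta \circ \Delta_{T,S}$ and $\Delta_{S,T}$ are both left adjoints of $m_{S,T}$, and since an order-preserving map has at most one left adjoint they agree: $\Delta_{S,T} = \beta \circ \Delta_{T,S}$, i.e. $\kk\mbf{H}$ is cocommutative.

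I expect the main obstacle to be the bootstrapping order. Cocommutativity drops out of commutativity by a clean symmetry-plus-uniqueness argument, but commutativity cannot be obtained the same way without circularity, since the mirror-image step would require cocommutativity. What breaks the symmetry is that the compatibility identities $\Delta_{S,T} \circ m_{S,T} = \mathrm{id}$ and its $(T,S)$-variant hold unconditionally, so combining them with the two adjunctions sandwiches $u$ and $v$ between one another. Concretely, the one place I would check with care is the direction of each Galois connection — choosing $\Delta \dashv m$ rather than $m \dashv \Delta$, and reading off $u \leq v$ versus $v \leq u$ correctly — since a flipped inequality there is the easiest way for the argument to silently fail. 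Notably, this route does not require the decomposition $\mbf{H} = \mbf{p} \oplus \mbf{d}$ from the preceding lemma, which keeps the proof self-contained.
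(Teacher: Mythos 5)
Your proof is correct, but it takes a genuinely different route from the paper's. The paper works in the linearization $\kk\mbf{H}$: it forms the commutator $[x,y]_I = m_{S,T}(x,y) - m_{T,S}(y,x)$, observes it lies in the subspecies $\mbf{d}$ of decomposables, shows by induction on $|I|$ (via the compatibility axiom) that it is also primitive, and then invokes the preceding lemma $\kk\mbf{H} = \mbf{p} \oplus \mbf{d}$ — whose proof needs the zeta bilinear form, its nondegeneracy, and the inverted-basis description of primitives — to conclude the commutator vanishes; cocommutativity is then deduced from the duality $(\mbf{H},m) \cong (\mbf{H}^*,\Delta^*)$ of Theorem \ref{thm:duality}. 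You instead argue entirely at the level of elements and the order: the connectedness/compatibility identities $\Delta_{S,T}\circ m_{S,T} = \mathrm{id}$ and $\Delta_{T,S}\circ m_{S,T} = \beta$ (both computed correctly from the compatibility square with two empty intersection blocks), combined with the two Galois connections and antisymmetry of the partial order, sandwich $m_{S,T}(y,z)$ and $m_{T,S}(z,y)$ between each other; uniqueness of left adjoints then yields $\Delta_{S,T} = \beta\circ\Delta_{T,S}$. Your WLOG reduction to $\Delta \dashv m$ by order reversal is also sound, since reversal swaps the two kinds of adjoint pair without touching $m$ or $\Delta$. What your approach buys: it is self-contained (no induction, no linear algebra, no dependence on the $\mbf{p}\oplus\mbf{d}$ lemma or on Theorem \ref{thm:duality}), and it proves the stronger set-level statement that $\mbf{H}$ itself is commutative and cocommutative — which is precisely what Theorem \ref{thm: classification self-adjoint} asserts, whereas the paper's commutator argument, relying on subtraction, only directly addresses $\kk\mbf{H}$. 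What the paper's route buys is a proof pattern parallel to Zelevinsky's argument for self-dual Hopf algebras, exhibiting how the primitives-versus-decomposables machinery developed earlier in the paper can be deployed; your argument, by contrast, makes no use of primitives at all.
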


\begin{proof} It suffices to show that the commutator 
\[[x,y]_I = m_{S,T}(x,y) - m_{T,S}(y,x) = 0\] for all elements $x,y$ and decompositions $I = S \sqcup T$. This would imply that $\kk \mbf{H}$ is commutative and self-duality would give that it is also cocommutative. 

We proceed by induction on the size of $I$. The base case is when $I = \emptyset$. By our assumption that $\kk \mbf{H} = \kk$, we have that the bracket is zero.

Now suppose that we have proven that $[-,-]_J = 0$ for all $J$ of cardinality smaller than $n$. Let $I$ be a set of cardinality $n$. Let $\mbf{d}[I] = \kk \{x \in \kk \mbf{H}[I] \; \lvert \; x = y \cdot z\}$. By definition, $[x,y]_I$ is an element in $\mbf{d}[I]$. So, if we show that $[x,y]_I$ is also primitive, then by the previous lemma it must be $0$. Suppose $x \in {H}[S]$ and $y \in {H}[T]$.

\begin{align*}
    \Delta_{S',T'}([x,y]_I) &= \Delta_{S',T'}(m_{S,T}(x,y) - m_{T,S}(y,x)) \\
    &= [\Delta_{S \cap S', S \cap T'}(x), \Delta_{T \cap S', T \cap S'}(y)] \\
    &= [x|_{S \cap S'}, y|_{T \cap S'}] \otimes [x/_{S \cap S'}, y/_{T \cap S'}]
\end{align*}
where we explicitly use the compatibility axiom of Hopf monoids. Notice that as long as our decomposition $I = S',T'$ is non-trivial, then the inductive hypothesis gives us that this bracket is zero. Therefore, $[x,y]_I$ is in $\mbf{p}[I]$ and $\mbf{d}[I]$, which implies that it is $0$.
\end{proof}

We have now proved the following classification of self-adjoint Hopf monoids.

\begin{theorem}\label{thm: classification self-adjoint} A set Hopf monoid is self-adjoint with respect to some poset structure if and only if it is commutative and cocommutative.
\end{theorem}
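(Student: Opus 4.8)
The plan is to prove Theorem \ref{thm: classification self-adjoint} by assembling the two directions that the preceding development has essentially already established, and then checking carefully that the logical bookkeeping goes through. One direction is the content of the theorem stating that a commutative and cocommutative set Hopf monoid becomes a self-adjoint poset Hopf monoid under the reassembly poset $\leq_r$; that gives the ``if'' direction immediately, since it exhibits an explicit poset structure witnessing self-adjointness. So the substance lies in the ``only if'' direction, and here I would simply invoke the two lemmas proved just above. The first lemma gives the Hopf-theoretic decomposition $\mbf{H} = \mbf{p} \oplus \mbf{d}$ for any self-adjoint poset Hopf monoid, and the second lemma uses that decomposition together with an induction on $|I|$ to force the commutator $[x,y]_I$ to vanish, yielding commutativity and (by self-duality) cocommutativity.

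Concretely, I would structure the argument as follows. First I would state that the forward implication is exactly the earlier theorem on the reassembly poset: given a commutative, cocommutative set Hopf monoid $(\mbf{H}, m, \Delta)$, equipping $\mbf{H}[I]$ with $\leq_r$ makes it a poset Hopf monoid that is self-adjoint with $\Delta \dashv m$. Then, for the converse, I would take a self-adjoint poset Hopf monoid $\mbf{H}$ with \emph{some} poset structure and apply the second lemma directly to conclude that $\kk\mbf{H}$ is commutative and cocommutative. The one point requiring care is the transition between ``$\kk\mbf{H}$ is commutative and cocommutative'' (the output of the lemma, a statement about the linearization) and ``the underlying set Hopf monoid is commutative and cocommutative.'' Because the monoid is assumed linearized, the multiplication and comultiplication maps send basis elements to basis elements (or sums thereof), so a vanishing commutator on the linearized basis translates back to an equality $m_{S,T}(x,y) = m_{T,S}(y,x)$ of set-level structures; I would spell out this last translation explicitly.

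The main obstacle, and the step I would emphasize, is ensuring the logical scope of ``self-adjoint'' is handled correctly: the theorem quantifies over the existence of \emph{some} compatible poset structure, whereas the lemmas are stated for a fixed self-adjoint poset Hopf monoid. I would therefore open the converse by fixing an arbitrary poset structure making $\mbf{H}$ self-adjoint, note that the two lemmas apply verbatim to this fixed structure (they only use self-adjointness, the nondegeneracy of the zeta form on $\mbf{p}$, and the compatibility axiom), and thereby obtain commutativity and cocommutativity of $\kk\mbf{H}$ independent of which poset was chosen. The only genuinely delicate verification hidden inside is that the zeta bilinear form restricts nondegenerately to $\mbf{p}$, but that is precisely the triangularity-of-$\mu$ computation already carried out in the first lemma, so I would simply cite it rather than redo it. With these pieces in place the proof is a short paragraph citing the two lemmas and the reassembly-poset theorem, and I expect no new computation beyond the set-versus-linearized translation remarked on above.
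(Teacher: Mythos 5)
Your proposal is correct and takes essentially the same route as the paper: the paper proves this theorem precisely by combining the reassembly-poset theorem (the ``if'' direction) with the two preceding lemmas, namely the decomposition $\kk\mbf{H} = \mbf{p} \oplus \mbf{d}$ and the inductive vanishing of the commutator $[x,y]_I$ (the ``only if'' direction). Your additional bookkeeping --- fixing an arbitrary witnessing poset structure and translating commutativity of the linearization $\kk\mbf{H}$ back to the set-level equality $m_{S,T}(x,y) = m_{T,S}(y,x)$ --- only makes explicit steps the paper leaves implicit.
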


\subsection{Antipode}

We now describe the antipode of a self-adjoint Hopf monoid with the reassembly poset. First, some additional properties of this poset.

\begin{lemma} Let $\mbf{H}$ be a self-adjoint Hopf monoid equipped with the reassembly poset. Let $x \in \mbf{H}[S]$ and $y \in \mbf{H}[T]$ with $I = S \sqcup T$. Then, $\omega_x \cdot \omega_y = \omega_{x\cdot y}$.
\end{lemma}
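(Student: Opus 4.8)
The plan is to verify the identity by pairing both sides with the zeta bilinear form $\langle-,-\rangle$ against every linearized basis element $w\in\mbf H[I]$. Since the matrix $\big(\zeta(v,w)\big)$ indexed by the elements of the reassembly poset is unitriangular, the pairing is non-degenerate in its first argument, so $\omega_x\cdot\omega_y=\omega_{x\cdot y}$ will follow once I check that $\langle\omega_x\cdot\omega_y,w\rangle=\langle\omega_{x\cdot y},w\rangle$ for all such $w$. The right-hand side is immediate from the orthogonality $\langle\omega_a,b\rangle=\delta_{a,b}$ computed in the proof of Theorem~\ref{thm:prim} (which is valid verbatim for $\leq_r$), giving $\langle\omega_{x\cdot y},w\rangle=\delta_{x\cdot y,\,w}$.

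First I would record the auxiliary identity $\Delta_{S,T}\circ m_{S,T}=\mathrm{id}$ on $\mbf H[S]\times\mbf H[T]$. Applying the compatibility axiom to the two equal decompositions $I=S\sqcup T$, the four blocks are $A=S$, $B=C=\emptyset$, $D=T$; counitality turns $\Delta_{S,\emptyset}(x)\otimes\Delta_{\emptyset,T}(y)$ into $x\otimes 1\otimes 1\otimes y$, the braiding acts trivially on the two copies of $\mbf H[\emptyset]$, and unitality of $m$ collapses the diagram to $\Delta_{S,T}(m_{S,T}(x,y))=(x,y)$. In particular $m_{S,T}$ is injective, its image consists of those $w$ with $w=m_{S,T}(\Delta_{S,T}(w))$, and on this image the factors are recovered by $\Delta_{S,T}(w)=(w_1,w_2)$.

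For the left-hand side I would expand by bilinearity of $m_{S,T}$, writing $\omega_x\cdot\omega_y=\sum_{x\leq_r u,\;y\leq_r v}\mu(x,u)\mu(y,v)\,m_{S,T}(u,v)$, so that $\langle\omega_x\cdot\omega_y,w\rangle=\sum_{x\leq_r u,\;y\leq_r v}\mu(x,u)\mu(y,v)\,\zeta\big(m_{S,T}(u,v),w\big)$. The crux is evaluating $\zeta(m_{S,T}(u,v),w)$, i.e. deciding when a product lies below $w$ in the reassembly poset. This is governed by property~(2) of Theorem~\ref{thm: Marberg Poset}, read as the description of the principal filter above a product: $m_{S,T}(u,v)\leq_r w$ holds exactly when $w=m_{S,T}(w_1,w_2)$ for some $w_1,w_2$ with $u\leq_r w_1$ and $v\leq_r w_2$. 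Combined with the auxiliary identity, the factor vanishes unless $w\in\mathrm{im}(m_{S,T})$, in which case $(w_1,w_2)=\Delta_{S,T}(w)$ is forced and the condition reduces to $u\leq_r w_1$, $v\leq_r w_2$; the double sum then factors as $\big(\sum_{x\leq_r u\leq_r w_1}\mu(x,u)\big)\big(\sum_{y\leq_r v\leq_r w_2}\mu(y,v)\big)=\delta_{x,w_1}\,\delta_{y,w_2}$ by the defining recursion of $\mu$.

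To finish I would match the two pairings. If $w\notin\mathrm{im}(m_{S,T})$ both vanish, since $x\cdot y\in\mathrm{im}(m_{S,T})$ forces $\delta_{x\cdot y,w}=0$ as well. If $w\in\mathrm{im}(m_{S,T})$, the auxiliary identity gives $x\cdot y=w\iff(x,y)=\Delta_{S,T}(w)=(w_1,w_2)$, so $\delta_{x\cdot y,w}=\delta_{x,w_1}\delta_{y,w_2}$, agreeing with the left-hand side. Hence $\langle\omega_x\cdot\omega_y,w\rangle=\langle\omega_{x\cdot y},w\rangle$ for every $w$, which yields the claim. The main obstacle is precisely the factor $\zeta(m_{S,T}(u,v),w)$: the adjoint pair $\Delta\dashv m$ only describes the elements lying \emph{below} a product, so it is useless for the filter \emph{above} a product, and one must instead invoke the covariant description of Marberg's property~(2) together with $\Delta_{S,T}m_{S,T}=\mathrm{id}$ to pin down the factors $(w_1,w_2)$ and collapse the M\"obius sums.
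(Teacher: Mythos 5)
Your proof is correct, but it takes a genuinely different route from the paper's. The paper works directly with coefficients: it expands $\omega_x\cdot\omega_y$ as a sum over the product poset $\mbf{H}[S]\times\mbf{H}[T]$ and $\omega_{x\cdot y}$ as a sum over $\mbf{H}[I]$, then invokes property (2) of Theorem \ref{thm: Marberg Poset} both to biject the two index sets and to assert that the intervals $[x\cdot y,z]_{\mbf{H}[I]}$ and $[(x,y),(z_1,z_2)]_{\mbf{H}[S]\times\mbf{H}[T]}$ are isomorphic, hence carry equal M\"obius numbers. You instead dualize: you pair both sides against an arbitrary linearized basis element $w$ under the zeta form, reduce the right-hand side to $\delta_{x\cdot y,w}$ via the orthogonality $\langle\omega_a,b\rangle=\delta_{a,b}$, and conclude by non-degeneracy. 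Both arguments hinge on the same combinatorial input --- Marberg's property (2), which you correctly read in the intended direction $u\leq_r w_1$, $v\leq_r w_2$ (the paper's statement of (2) has the inequalities typo-ed backwards, but its own proof of this lemma uses it exactly as you do) --- yet your route never needs the interval isomorphism or any comparison of M\"obius function values: your M\"obius sums collapse to delta functions by the defining recursion of $\mu$ alone. The price is the auxiliary identity $\Delta_{S,T}\circ m_{S,T}=\mathrm{id}$, which you derive correctly from compatibility, counitality, and unitality, and whose consequence (injectivity of $m_{S,T}$) is what makes the factorization $w=m_{S,T}(w_1,w_2)$ unique and lets the double sum factor. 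In comparison, the paper's proof is shorter but asserts the interval isomorphism without detail (in particular, it silently uses the uniqueness of the pair $(z_1,z_2)$, which is exactly the injectivity you prove); your version replaces that assertion with elementary telescoping at the cost of more setup, and as a by-product records the clean facts that $m_{S,T}$ is split injective with image $\{w : w=m_{S,T}(\Delta_{S,T}(w))\}$.
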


\begin{proof} The left-hand-side is
\begin{align*}
    \omega_x \cdot \omega_y &= \sum_{\substack{ x\leq_r z_1\\ y \leq_r z_2}} \mu_{\mbf{H}[S]}(x,z_1) \cdot \mu_{\mbf{H}[T]}(y,z_2) z_1 \cdot z_2 \\
    &= \sum_{(x,y) \leq_r (z_1,z_2)} \mu_{\mbf{H}[S] \times \mbf{H}[T]}((x,y),(z_1,z_2)) z_1 \cdot z_2,
\end{align*}
where $\mbf{H}[S] \times \mbf{H}[T]$ is the product poset. The right-hand-side is given by
 \[\omega_{x \cdot y} = \sum_{x\cdot y \leq_r z} \mu_{\mbf{H}[I]}(x \cdot y,z) z. \]
 
Property (2) of Theorem \ref{thm: Marberg Poset} implies that if $x \cdot y \leq_r z$, then there is a $z_1 \in \mbf{H}[S]$ and a $z_2 \in \mbf{H}[T]$ such that $(x,y) \leq_r (z_1,z_2)$. Further, it means that the intervals $[x\cdot y,z]_{\mbf{H}[I]}$ and $[(x,y),(z_1,z_2)]_{\mbf{H}[S] \times \mbf{H}[T]}$ are isomorphic and therefore have the same M\"obius number. This implies that the two sums above are the same.
 \end{proof}

Since a self-adjoint Hopf monoid is commutative and cocommutative, we have the following extension of the Cartier-Milnor-Moore Theorem to Hopf monoids first proven by Stover.

\begin{theorem}\cite{STOVER1993289}\label{thm: stover} Let $\mbf{H}$ be a commutative and cocommutative Hopf monoid and let $\mathcal{P}(\mbf{H})$ be it's species of primitives. Then,
 \[ \mbf{H} \cong \mbf{S}(\mathcal{P}(\mbf{H})).\]
In particular, $\mbf{H}$ is free commutative, free cocommutative, and self-dual.
\end{theorem}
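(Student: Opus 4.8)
The plan is to exhibit the stated isomorphism as the canonical comparison morphism out of the free commutative monoid, and then to analyze it by means of the primitive decomposition furnished by the Eulerian idempotent. First I would build the map: the inclusion $\iota \colon \mathcal{P}(\mbf{H}) \hookrightarrow \mbf{H}$ of the subspecies of primitives is a morphism of species, so the universal property of the free commutative monoid yields a unique monoid morphism $\Phi \colon \mbf{S}(\mathcal{P}(\mbf{H})) \to \mbf{H}$ extending it. Since every generator is primitive and the comultiplication on $\mbf{S}(\mathcal{P}(\mbf{H}))$ is the unique cocommutative one for which the generators are primitive, $\Phi$ automatically commutes with $\Delta$, and is therefore a morphism of Hopf monoids. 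Everything then reduces to proving that each $\Phi[I]$ is a bijection.

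The main tool, and the place where the characteristic $0$ hypothesis is essential, is the first Eulerian idempotent $e = \sum_{k \geq 1} \frac{(-1)^{k-1}}{k}(\mathrm{id} - \iota\epsilon)^{\star k}$ for the convolution product $\star$. Connectedness makes $\mathrm{id} - \iota\epsilon$ locally $\star$-nilpotent, so $e$ is well defined, and cocommutativity makes it idempotent with image exactly $\mathcal{P}(\mbf{H})$. The normalized powers $e^{(k)} = \frac{1}{k!}\, e^{\star k}$ form a complete family of orthogonal idempotents summing to $\mathrm{id}$, producing a grading $\mbf{H} = \bigoplus_{k \geq 0} \mbf{H}^{(k)}$ with $\mbf{H}^{(0)} = \k$ and $\mbf{H}^{(1)} = \mathcal{P}(\mbf{H})$. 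The crucial, PBW-type, step is that commutativity together with this grading identifies each $\mbf{H}^{(k)}$ with the symmetric power $\Sym^k(\mathcal{P}(\mbf{H}))$, so that $\Phi$ carries the degree-$k$ component of $\mbf{S}(\mathcal{P}(\mbf{H}))$ isomorphically onto $\mbf{H}^{(k)}$. I expect this identification to be the main obstacle: it is precisely the assertion that the only relations among products of primitives are the commutativity relations, and the divisions by $k$ and $k!$ make clear why the characteristic $0$ assumption cannot be dropped.

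Granting this identification, $\Phi$ is a graded isomorphism and the structure theorem follows. In the self-adjoint setting of this paper one can alternatively bypass the idempotent entirely: the splitting $\mbf{H} = \mbf{p} \oplus \mbf{d}$ together with the inverted-basis multiplicativity $\omega_x \cdot \omega_y = \omega_{x \cdot y}$ gives surjectivity of $\Phi$ by induction on $|I|$ (the decomposables $\mbf{d}[I]$ come from strictly smaller label sets, the primitives from $\mathcal{P}(\mbf{H})[I]$), while injectivity follows from nondegeneracy of the zeta bilinear form restricted to the primitive factors.

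Finally, the three consequences are formal. By construction $\mbf{S}(\mathcal{P}(\mbf{H}))$ is simultaneously the free commutative monoid and the cofree cocommutative comonoid on $\mathcal{P}(\mbf{H})$, so $\mbf{H}$ inherits both properties through $\Phi$. For self-duality, dualizing interchanges the free commutative monoid on a species with the cofree cocommutative comonoid on its dual, giving $\mbf{S}(\mathcal{P}(\mbf{H}))^* \cong \mbf{S}(\mathcal{P}(\mbf{H})^*)$ as Hopf monoids; since in characteristic $0$ every $S_n$-representation is self-dual, $\mathcal{P}(\mbf{H})[I] \cong \mathcal{P}(\mbf{H})[I]^*$ as $S_{|I|}$-representations, and hence $\mbf{H} \cong \mbf{S}(\mathcal{P}(\mbf{H})) \cong \mbf{S}(\mathcal{P}(\mbf{H}))^* \cong \mbf{H}^*$.
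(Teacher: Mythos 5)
The paper itself contains no proof of this statement: it is imported wholesale from Stover as the species analogue of the Cartier--Milnor--Moore theorem, so your attempt can only be judged against the standard proofs of that result, not against anything internal to the paper. Judged that way, your skeleton is the right one, and most of its ingredients are correct: the universal property does give a monoid map $\Phi\colon \mbf{S}(\mathcal{P}(\mbf{H}))\to\mbf{H}$; your argument that $\Phi$ respects $\Delta$ (expand $\Delta_{S,T}$ on a product of primitives using the compatibility axiom) is sound; and the facts you invoke about the first Eulerian idempotent are true in this setting --- $e=\log^{\star}(\mathrm{id})$ is defined because connectedness makes $\mathrm{id}-\iota\epsilon$ locally $\star$-nilpotent, cocommutativity in characteristic $0$ makes $e$ idempotent with image $\mathcal{P}(\mbf{H})$, and $\mathrm{id}=\sum_{k}e^{\star k}/k!$ gives an orthogonal decomposition.

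The genuine gap sits exactly at the center of the argument: the ``PBW-type step'' identifying $\mbf{H}^{(k)}$ with $\Sym^{k}(\mathcal{P}(\mbf{H}))$ is not an auxiliary identification --- it \emph{is} the theorem --- and you explicitly defer it as ``the main obstacle'' rather than prove it. As written, you have a correct reduction of Stover's theorem to the Leray/PBW statement, not a proof. The gap is fillable by standard means that your setup already makes available: surjectivity of $\Phi$ follows because $\mathrm{id}=\sum_{k}e^{\star k}/k!$ writes every element as a sum of products of elements of $\mathrm{im}(e)=\mathcal{P}(\mbf{H})$; injectivity follows because the kernel of a surjection of connected Hopf monoids, if nonzero, contains a nonzero primitive (take $x\neq 0$ in the kernel with $|I|$ minimal; then $\Delta_{S,T}(x)$ lands in pieces of the kernel indexed by smaller sets, hence vanishes), while the primitives of $\mbf{S}(\mathcal{P}(\mbf{H}))$ are exactly $\mathcal{P}(\mbf{H})$, on which $\Phi$ is injective. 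Neither lemma appears in your sketch, and without them the argument does not close. Your proposed bypass in the self-adjoint case has a further defect: surjectivity by induction on $|I|$ via $\mbf{H}=\mbf{p}\oplus\mbf{d}$ is fine, but injectivity of $\Phi$ in degree $k$ is precisely the unique-factorization property, which the paper \emph{deduces from} this theorem; ``nondegeneracy of the zeta form on the primitive factors'' does not deliver it, so that route is circular as stated, and would in any case only cover the linearized self-adjoint case rather than arbitrary commutative cocommutative Hopf monoids. Finally, your self-duality argument silently assumes each $\mathcal{P}(\mbf{H})[I]$ is finite-dimensional (needed both for $V\cong V^{*}$ as $S_n$-representations and for $\mbf{S}(\mathcal{P}(\mbf{H}))^{*}\cong\mbf{S}(\mathcal{P}(\mbf{H})^{*})$); this holds in the paper's examples but should be flagged as a hypothesis.
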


In our case, we know that a basis of primitives is given by $\omega_x = \sum_{x\leq_r y}$ for $x$ indecomposable. Theorem \ref{thm: stover} tells us that this basis is a free generating set. This implies that every element $x \in \mbf{H}[I]$ has a unique unordered factorization $x = x_1 \cdots x_k$ into indecomposables, because if an element $x$ had two factorizations $x= x_1 \cdots x_k = x_1' \cdots x_{\ell}'$, then $\omega_x$ would have two factorizations in the free generating set. This means that reassembly poset is a graded by the function $\ell(x)$ which gives the number of terms in the unique factorization of $x$.

Recall that for a graded poset $P$ we can define the characteristic polynomial $p_{[x,y]}(t)$ of an interval $[x,y]$ as $p_{[x,y]}(t) = \sum_{x \leq z \leq y} \mu(x,z) t^{\ell(z)}$.

\begin{theorem}\label{thm:antipode} Let $\mathbf{H}$ be a self-adjoint Hopf monoid with the reassembly poset structure and let $p_{[x,y]}(t)$ denote the characteristic polynomial of the interval $[x,y]$ in $\mathbf{H}$ for some $x,y \in \mathbf{H}[I]$. Then,
 \[ S_I(x) = \sum_{x \leq_r y} p_{[x,y]}(-1) \; y.\]
\end{theorem}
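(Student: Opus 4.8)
The plan is to start from the standard (grouping-free) antipode formula
\[
S_I(x) = \sum_{S_1 \sqcup \cdots \sqcup S_k} (-1)^k\, m_{S_1,\ldots,S_k} \circ \Delta_{S_1,\ldots,S_k}(x),
\]
and reorganize the sum over ordered set partitions according to the output $y = m_{S_1,\ldots,S_k}\circ\Delta_{S_1,\ldots,S_k}(x)$, which by definition of $\leq_r$ ranges over exactly the $y$ with $x \leq_r y$. Thus I would write $S_I(x) = \sum_{x \leq_r y} c_{x,y}\, y$ and aim to identify the coefficient $c_{x,y}$ with $p_{[x,y]}(-1)$. The main task is therefore to show that
\[
c_{x,y} = \sum_{\substack{S_1 \sqcup \cdots \sqcup S_k \\ m_{S_\bullet}\circ\Delta_{S_\bullet}(x) = y}} (-1)^k = \sum_{x \leq_r z \leq_r y} \mu(x,z)\,(-1)^{\ell(z)} = p_{[x,y]}(-1).
\]

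The cleanest route I expect is to avoid counting ordered set partitions directly and instead work in the inverted basis, where Theorem~\ref{thm: inverted basis} makes both $m$ and $\Delta$ transparent. Since $\Delta \dashv m$, Theorem~\ref{thm: inverted basis} (in the $\Box \dashv \Delta$ / reassembly form, with $\Box = m$) gives $\Delta_{S,T}(\omega_x) = \sum_{x_1 \cdot x_2 = x}\omega_{x_1}\otimes\omega_{x_2}$, and the preceding lemma gives $m_{S,T}(\omega_{x_1},\omega_{x_2}) = \omega_{x_1\cdot x_2}$; so on the inverted basis the iterated operation $m_{S_\bullet}\circ\Delta_{S_\bullet}$ acts by first splitting $\omega_x$ along factorizations and then re-merging, producing again inverted basis elements. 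Concretely, I would first compute $S_I$ on the inverted basis: using the free-commutative structure from Theorem~\ref{thm: stover}, $\omega_x$ factors uniquely as a product of primitive generators $\omega_{x} = \omega_{p_1}\cdots\omega_{p_k}$ with each $x_i$ indecomposable, and since the antipode is an antihomomorphism acting on primitives by $S(\omega_{p})=-\omega_{p}$, one gets $S_I(\omega_x) = (-1)^{\ell(x)}\omega_x$. This is the key simplification: the antipode is diagonal in the inverted basis with eigenvalue $(-1)^{\ell(x)}$.

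The final step is to convert this diagonal action back to the linearized basis. Writing $x = \sum_{x \leq_r z}(\text{coeff}) \,\omega_z$ via M\"obius inversion — explicitly $z = \sum_{x \leq_r y}\zeta(\cdot)\,y$ inverts to $\omega$-coefficients $\mu(x,z)$, so $x = \sum_{x \leq_r z}\,(\text{appropriate } \mu)\,\omega_z$ — and applying $S_I(\omega_z) = (-1)^{\ell(z)}\omega_z$, then expanding each $\omega_z = \sum_{z \leq_r y}\mu(z,y)\,y$ back into the linearized basis and collecting the coefficient of $y$, yields
\[
S_I(x) = \sum_{x \leq_r y}\Bigl(\sum_{x \leq_r z \leq_r y}\mu(x,z)(-1)^{\ell(z)}\Bigr) y = \sum_{x \leq_r y} p_{[x,y]}(-1)\, y,
\]
using the definition $p_{[x,y]}(t) = \sum_{x \leq z \leq y}\mu(x,z)t^{\ell(z)}$. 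The main obstacle I anticipate is the careful bookkeeping of the two change-of-basis transformations (linearized $\to$ inverted and back) and making sure the grading $\ell$ interacts correctly with the product decomposition — in particular verifying $S(\omega_x)=(-1)^{\ell(x)}\omega_x$ rigorously from the antihomomorphism property and the primitivity of the generating $\omega_{x_i}$ (Theorem~\ref{thm:prim}), which is where commutativity/cocommutativity and Theorem~\ref{thm: stover} do the real work.
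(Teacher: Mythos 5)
Your proposal is correct in outline and follows essentially the same two-step strategy as the paper's proof: show that the antipode is diagonal on the inverted basis, $S_I(\omega_x)=(-1)^{\ell(x)}\omega_x$, then return to the linearized basis by M\"obius inversion. The one genuine difference is how the diagonalization is obtained. The paper applies the defining (Takeuchi-type) antipode formula directly to $\omega_x$, uses Theorem~\ref{thm: inverted basis} together with the multiplicativity lemma $\omega_{x_1}\cdot\omega_{x_2}=\omega_{x_1\cdot x_2}$ to reduce everything to a signed sum over factorizations of $x$, and evaluates that sum as $(-1)^{\ell(x)}$ by identifying factorizations with ordered set partitions of the set of irreducible factors. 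You instead factor $\omega_x=\omega_{p_1}\cdots\omega_{p_k}$ into primitives (via the same lemma, Theorem~\ref{thm:prim}, and Theorem~\ref{thm: stover}) and invoke the general facts that the antipode is an algebra antihomomorphism and acts by $-\mathrm{id}$ on primitives. This is shorter and more structural, but it rests on the antihomomorphism property of $S$ for Hopf monoids, which the paper never states or proves (it is standard, but it is an extra external input), whereas the paper's computation is self-contained given its earlier results. Also a small slip: the reassembly poset satisfies $\Delta\dashv m$, not $m\dashv\Delta$, so Theorem~\ref{thm: inverted basis} applies literally as stated, with $\omega_x$ rather than $\omega^x$; the formula you actually use is the right one, only your parenthetical label is backwards.

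One substantive warning about the final bookkeeping, which you yourself flagged as the delicate point. M\"obius inversion gives $x=\sum_{x\le_r z}\omega_z$ with all coefficients equal to $1$ (not ``appropriate $\mu$''), and expanding $\omega_z=\sum_{z\le_r y}\mu(z,y)\,y$ then yields the coefficient $\sum_{x\le_r z\le_r y}(-1)^{\ell(z)}\mu(z,y)$ on $y$: the M\"obius function comes out anchored at the top $y$ of the interval, not at the bottom as in the stated definition $p_{[x,y]}(t)=\sum_{x\le z\le y}\mu(x,z)\,t^{\ell(z)}$. These two sums differ in general: for set partitions of a $4$-element set with $x$ the one-block partition and $y$ the partition into singletons, the first sum equals $24$ (matching the Aguiar--Ardila antipode and the paper's own worked example in Section 5), while $\sum_z\mu(x,z)(-1)^{\ell(z)}=-26$. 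So writing the collected coefficient as $\sum_z\mu(x,z)(-1)^{\ell(z)}$, as your last display does, is not justified by the expansion you describe; the paper's final line makes the same silent swap. The correct conclusion of this argument is the formula with $\mu(z,y)$, i.e.\ the characteristic polynomial read with the M\"obius function anchored at $y$ (equivalently, the characteristic polynomial of the opposite interval), and that is the version consistent with the graph, hypergraph, and set-partition examples.
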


\begin{proof} We prove this by computing $S_I$ on the inverted basis and then using M\"obius inversion. Towards this, let $\omega_x$ be an inverted basis element. Then,
\begin{align*}
S(\omega_x) &=\sum_{\substack{S_1 \sqcup \cdots \sqcup S_k = I\\ S_i \not = \emptyset}} (-1)^km_{S_1,\ldots,S_k} \circ \Delta_{S_1,\ldots, S_k}(\omega_x) \\
&= \sum_{\substack{S_1 \sqcup \cdots \sqcup S_k = I\\ S_i \not = \emptyset}} \sum_{\substack{x_1,\ldots, x_k \\m_{S_1,\ldots,S_k}(x_1,\ldots,x_k) = x}}(-1)^k \omega_{x_1} \cdots \omega_{x_k}
\end{align*}
The second step follows from Theorem \ref{thm: inverted basis}. The previous lemma simplifies this to the following.
\begin{align*}
S(\omega_x) &=\omega^x \sum_{\substack{S_1 \sqcup \cdots \sqcup S_k = I\\ S_i \not = \emptyset}}  \sum_{\substack{x_1,\ldots,x_k\\m_{S_1,\ldots,S_k}(x_1,\ldots,x_k) = x}} (-1)^k \\
\end{align*}

Since $x$ has a unique factorization into some irreducible elements $y_1,\ldots, y_k$, every other factorization $x = x_1 \cdot \cdot \cdot x_m$ will have the property that the $x_i$ are products of the $y_i$ such that each $y_i$ appears in exactly one $x_i$. In other words, the factorization of $x$ are in bijection with ordered set partitions of the set $\{y_1,\cdots,y_k\}$. Then, this sum is the rank-generating function of the lattice of ordered set partitions on this set evaluated at $-1$. This is
\[ S(\omega_x) = (-1)^{\ell(x)} \omega_x\]
By M\"obius inversion, we have
\begin{align*}
    S_I(x) &= \sum_{x \leq_r z} S_I(\omega_z) \\
        &= \sum_{x \leq_r z} (-1)^{\ell(y)} \omega_z
\end{align*} 
Let $\langle y \rangle \;S_I$ denote the coefficient of $y$ in the expansion of $S_I$ in the basis $\mbf{b}$. Then,
 \[ \langle y \rangle \; S_I(x) = \sum_{x \leq_r z\leq_r y} \mu(x,z)(-1)^{\ell(z)} \]
Since the poset is graded by $\ell$, this sum is exactly $p_{[x,y]}(-1)$.
\end{proof}

\begin{example} We have seen that the reassembly poset on graphs is the poset where $G \leq_r H$ if and only if $H$ is a flat of $G$. We also saw that $[G,H]$ is isomorphic to the interval $[G/H,\emptyset]$, which is isomorphic to intersection lattice of the graphical arrangement of $ G/H$. In this situation, the evaluation of the characteristic polynomial at $(-1)$ has a beautiful interpretation due to Zaslavsky. He showed that for a hyperplane arrangement $\mathcal{A}$ with intersection lattice $L(\mathcal{A})$, we have
 \[ p_{L(\mathcal{A})}(-1) = (-1)^{r(L(\mathcal{A}))} c(\mathcal{A}),\]
where $c(\mathcal{A})$ is the number of connected regions of the complement of $\mathcal{A}$ and $r(L(\mathcal{A}))$ is the rank of the poset, see for example \cite{EC1}.
For a graphical hyperplane arrangement, it is well-known that the number of connected regions in it's complement is the number of acyclic orientations. Also, the rank of the intersection lattice of $G/H$ is $|I| - rk(H)$ where the rank of $H$ is the size of the largest spanning tree pf $H$. Combining all this with the previous theorem, we obtain the following description of the antipode for $\mbf{Graphs}$.
\[S_I(G) = \sum_{\text{$H$ flat of $G$}} (-1)^{|I| - rk(H)} acyc(G/H) H,\]
where $acyc(G/H)$ is the number of acyclic orientations of $G/H$. This agrees with the previous calculations of this antipode \cite{GPHopf}.
\end{example}
\section{Examples}
In this section we give quick examples of our theory to Hopf monoids. We show how to calculate primitives and prove cofreeness for various Hopf monoids and algebras. For the self-adjoint Hopf monoids we calculate the antipode.

We have omitted the example of matroids since rereading Crapo and Schmitt's paper \cite{PrimMatroids} with our ideas in mind makes it clear how it fits into our framework.

\subsection{Set Partitions and Symmetric Functions}
A set partition of $I$ is an unordered collection of non-empty subsets $\pi = \{\pi_1, \pi_2, \cdots, \pi_k\}$ that are mutually disjoint such that $I = \pi_1 \sqcup \pi_2 \sqcup \cdots \sqcup \pi_k$. These sets $\pi_i$ are called the parts of $\pi$ and $\ell(\pi)$ is the number of blocks of $\pi$. We say that $\pi \leq \tau$ if $\tau$ refines $\pi$, that is each part of $\tau$ is fully contained in some part of $\pi$. This relation gives the set of partitions of $I$ the structure of a lattice.

If $\pi$ is a partition if $S$ and $\tau$ is a partition of $T$ with $S\cap T = \emptyset$, define their union $\pi \cup \tau$ to be the partition with parts $\{\pi_1, \cdots , \pi_k, \tau_1, \cdots, \tau_\ell\}$. If $\pi$ is a partition of $I$ and $S \subset I$, define the restriction $\pi|_S$ to be the partition $\{\pi_1 \cap S, \pi_2 \cap S, \cdots, \pi_k \cap S\}$ where we remove the empty sets that appear in these intersections.

\begin{definition} The \textbf{poset Hopf monoid of set partitions} is the Hopf monoid on the poset species
 \[ \mbf{Par}[I] = \text{Lattice of partitions of $I$}\]
where with structure maps
 \[ m_{S,T}(\pi,\tau) = \pi \cup \tau \quad \quad \text{and} \quad \quad \Delta_{S,T}(\pi) = (\pi|_S, \pi|_T)\]
\end{definition}

It is not hard to see that 
    \[(\pi|_S, \pi|_T) \leq (\tau_1, \tau_2) \iff \pi \leq \tau_1 \cup \tau_2. \]
So partitions forms a self-adjoint Hopf monoid. This immediately tells us that this Hopf monoid is free commutative, free cocommutative self-dual, by Theorem \ref{thm: stover}. We can also calculate the primitives. Let $\pi_I$ denote the partition with one part $I$, which is the only indecomposable of $\mbf{Par}[I]$. Then the primitives are scalings of

\[ \omega_{\pi_I} = \sum_{\tau \in \mbf{Par}[I]} \mu(\pi_I,\tau) \tau. \]
More explicitly,
 \[ \omega_{\pi_I} = \sum_{\tau \in \mbf{Par}[I]} \left (\prod_{\tau_i} (-1)^{|\tau_i| - 1}(|\tau_i| - 1)!\right ) \tau = \sum_{\tau \in \mbf{Par}[I]} (-1)^{|I| - \ell(\tau)}\left (\prod_{\tau_i} (|\tau_i| - 1)!\right ) \tau.\]

Applying the first Fock functor to this Hopf monoid gives us a Hopf algebra of set partitions up to relabelling. This is the same as the Hopf algebra of integer partitions $Par = \bigoplus_{n}Par_n$ where the vector space $Par_n$ is spanned by the integer partitions of $n$. The poset structure induced on the basis of integer partitions is the refinement of integer partitions. For a given set partition $\pi$, let $\lambda(\pi)$ denote its underlying integer partition which is given by the size of the parts. By our results in the Fock functor section, a basis of the primitives of $Par$ is given by
\[ \omega_{(n)} = \sum_{\tau \in \mbf{Par}[I]} \mu(\pi_I,\tau) \lambda(\tau) .\]

A result of Marcelo Aguiar and Federico Ardila \cite{GPHopf} is that we have an isomorphism between the Hopf algebra of partitions $Par$ and the Hopf algebra of symmetric functions $\Lambda$ given by
 \[ n! \; (n) \mapsto h_n,\]
where $(n)$ is the integer partition of $n$ with one part and $h_n$ is the homogeneous basis of $\Lambda$.

We can combine this isomorphism with our description of primitives of Theorem \ref{thm:prim} to get a description of the primitives of $\Lambda$.  We get a basis of the primitives of $\Lambda$ given by
    \[ \omega_{h_n} = \sum_{\tau \in \mbf{Par}[{1,2,\cdots,n}]} \mu(\pi_i, \tau)
\]
Since this a graded basis for the space of primitives, standard symmetric function theory tells us that these $\omega_{h_n}$ coincide with the power sums $p_n$ up to a factor.
This recovers a classical result by Doubilet \cite{Doubilet} that expresses the power sum functions in terms of M\"obius inversion of the homogeneous basis over the set partition lattice. 
\begin{theorem}\cite{Doubilet} The power sum symmetric functions $p_n \in \Lambda$ are given by
\[ p_n = \frac{1}{\mu(\hat{0}, \hat{1})} \sum_{\Phi} \mu(\hat{0},\Phi) h_{\lambda(\Phi)},\]
where the sum is over the set partition lattice.
\end{theorem}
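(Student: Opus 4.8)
The plan is to read the statement off the machinery already built in this subsection, using only that $\omega_{h_n}$ is primitive together with the classical structure of $\Lambda$. The skeleton is: (i) primitives of $\Lambda$ in each positive degree are one‑dimensional; (ii) $\omega_{h_n}$ is such a primitive; hence (iii) $\omega_{h_n}=c_n\,p_n$ for a scalar $c_n$, and the whole content of the theorem is the evaluation $c_n=\mu(\hat 0,\hat 1)$.

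First I would recall that $\Lambda$ is the free commutative, cofree cocommutative, connected graded Hopf algebra whose space of primitives is spanned in degree $n$ by the power sum $p_n$; this is exactly Theorem~\ref{thm: stover} applied to $\Lambda$ (or plain symmetric function theory). Consequently any primitive element of $\Lambda_n$ is a scalar multiple of $p_n$. Next I would invoke Theorem~\ref{thm:fock}: since $\pi_I=\hat 0$, the one‑block partition, is the unique $m$‑indecomposable element of $\mbf{Par}[I]$, its inverted basis element $\omega_{\pi_I}=\sum_{\tau}\mu(\hat 0,\tau)\,\tau$ is primitive in the set‑partition Hopf monoid, so its image $\omega_{h_n}=\sum_{\Phi}\mu(\hat 0,\Phi)\,h_{\lambda(\Phi)}$ under the Fock functor and the Aguiar--Ardila isomorphism is a primitive of $\Lambda_n$. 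Combining the two steps yields $\omega_{h_n}=c_n\,p_n$, reducing everything to identifying $c_n$.

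To pin down $c_n$ I would compare the coefficient of a single extreme monomial on both sides, the natural choice being the term indexed by $\lambda=(1^n)$, i.e. $h_1^n$. Among all set partitions $\Phi$ of $I$, the only one with $\lambda(\Phi)=(1^n)$ is the finest partition $\hat 1$, so the coefficient of $h_1^n$ in $\omega_{h_n}$ is exactly $\mu(\hat 0,\hat 1)$; this can also be read directly off the explicit expansion $\omega_{\pi_I}=\sum_\tau(-1)^{|I|-\ell(\tau)}\bigl(\prod_i(|\tau_i|-1)!\bigr)\tau$, whose $\hat 1$‑coefficient is $(-1)^{n-1}(n-1)!=\mu(\hat 0,\hat 1)$. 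Matching this against the corresponding coefficient of $c_n\,p_n$, whose normalization is fixed by the leading term of $p_n$, forces $c_n=\mu(\hat 0,\hat 1)$. Rearranging $\omega_{h_n}=\mu(\hat 0,\hat 1)\,p_n$ then gives $p_n=\tfrac{1}{\mu(\hat 0,\hat 1)}\sum_{\Phi}\mu(\hat 0,\Phi)\,h_{\lambda(\Phi)}$, as claimed.

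The main obstacle I anticipate is not any of the conceptual steps, which are supplied outright by Theorems~\ref{thm:fock} and~\ref{thm: stover}, but the careful bookkeeping of normalizations as $\omega_{\pi_I}$ is transported through the Fock functor and the isomorphism $n!\,(n)\mapsto h_n$. One must verify that the composite sends $\tau$ to $h_{\lambda(\Phi)}$ up to the single uniform scalar that the coefficient comparison absorbs, and that the $\hat 1$‑coefficient used to isolate $c_n$ transforms correctly; it is precisely here that the factor $\mu(\hat 0,\hat 1)=(-1)^{n-1}(n-1)!$ must be accounted for rather than an uncontrolled sign or factorial. Once this single coefficient is tracked honestly, the one‑dimensionality of the primitive space makes the rest automatic.
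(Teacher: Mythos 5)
Your conceptual skeleton (primitivity of $\omega_{h_n}$ via Theorems \ref{thm:prim} and \ref{thm:fock}, one-dimensionality of the primitives of $\Lambda_n$, hence $\omega_{h_n}=c_n\,p_n$) is exactly the paper's argument; the paper stops there and cites Doubilet for the constant. The step you add --- evaluating $c_n$ --- is precisely the bookkeeping you yourself flag as ``the main obstacle,'' and it is where the proof fails, in two places. First, the expansion $\omega_{h_n}=\sum_\Phi\mu(\hat0,\Phi)\,h_{\lambda(\Phi)}$ is not what transport through the Fock functor and the Aguiar--Ardila map yields. In $Par$ one has $[\tau]=\prod_i(\lambda_i(\tau))$, so any isomorphism rescaling the generators propagates multiplicatively; compatibility with the coproduct $\Delta((n))=\sum_k\binom{n}{k}(k)\otimes(n-k)$ forces $(n)\mapsto n!\,h_n$ (the direction $n!\,(n)\mapsto h_n$ quoted in the paper is not a coalgebra map), hence $[\tau]\mapsto\bigl(\prod_i\lambda_i(\tau)!\bigr)h_{\lambda(\tau)}$, and the primitive you actually obtain is $\sum_\Phi\mu(\hat0,\Phi)\bigl(\prod_i\lambda_i(\Phi)!\bigr)h_{\lambda(\Phi)}$. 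These blockwise factorials vary with $\Phi$; they are not a single uniform scalar that one coefficient comparison can absorb. Second, your comparison tacitly takes the coefficient of $h_1^n$ in $p_n$ to be $1$, but it is $(-1)^{n-1}$ (e.g.\ $p_2=2h_2-h_1^2$), so even granting your expansion you would get $c_n=(n-1)!$, not $c_n=\mu(\hat0,\hat1)$.

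A check at $n=3$ makes the gap concrete. In the paper's order ($\hat0$ the one-block partition) the correct values are $\mu(\hat0,\Phi)=(-1)^{\ell(\Phi)-1}(\ell(\Phi)-1)!$; note the ``explicit'' blockwise formula printed in the paper is itself erroneous, which is why your claim to read the $\hat1$-coefficient off that display is inconsistent (the display gives $\prod_i 0!=1$, not the correct $(-1)^{n-1}(n-1)!$). Then the unweighted sum is $h_3-3h_1h_2+2h_1^3$, which is not proportional to $p_3=3h_3-3h_1h_2+h_1^3$, so it cannot even be primitive, contradicting your own step (ii); the weighted sum is $6h_3-6h_1h_2+2h_1^3=2\,p_3=(n-1)!\,p_n$. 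What your strategy proves, once the transport is done honestly, is
\[
p_n=\frac{1}{(n-1)!}\sum_\Phi\mu(\hat0,\Phi)\Bigl(\prod_i\lambda_i(\Phi)!\Bigr)h_{\lambda(\Phi)},
\]
which matches Doubilet only under his normalization of the $h$'s; the identity as displayed in the theorem statement is not established by your argument (and, read literally with $h_\lambda=h_{\lambda_1}h_{\lambda_2}\cdots$, is not correct), so the missing factorial bookkeeping is a genuine gap rather than a formality.
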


Notice that we did not have to guess that the lattice of set partitions was the important lattice for this result. Everything came from general machinery. 

Using Theorem \ref{thm:antipode}, we can also calculate the antipode of partitions. First, notice that the reassembly poset $\leq_r$ of partitions is the same as the poset above. Namely, $\pi \leq_r \tau$ if $\tau$ refines $\pi$.  Then, the antipode is
\[ S_I(\pi) = \sum_{ \pi \leq_r \tau} p_{[\pi , \tau ]}(-1) \tau\]

The characteristic polynomial of the set partition lattice is well-studied. A calculation is given in Example 3.10.4 in Stanely's book \cite{EC1}. He shows,
 \[ \chi_{\mbf{Par}[I]}(t) = t(t-1)(t-2)\cdot \cdot \cdot(t-n+1).\]
Which means,
 \[\chi_{\mbf{Par}[I](-1)} = (-1)^{n} n!\]
Further, we know that any interval in the set partition lattice $[ \pi, \tau]$ is a product of smaller set partition lattices. Suppose that $\pi = \{B_1,\cdots,B_{\ell(\pi)}\}$ and that in the refinement $\tau$ each block $B_i$ is partitioned into $\lambda_i$ blocks. If $\Pi_{\lambda_i}$ denotes the set partition lattice of a set of size $\lambda_i$. Then,
 \[[\pi, \tau] \cong \Pi_{\lambda_1} \times \cdots \times \Pi_{\lambda_{\ell(\pi)}} \]

Combining these two facts alongside the realization that $\lambda_1 + \cdots + \lambda_{\ell(\pi)} = \ell(\tau)$ shows that 
 \[p_{[\pi , \tau ]}(-1) = (-1)^{\ell(\tau)}\prod_{B_i \in \pi} \lambda_i!\;.\] 
 Thus, we have
 \[ S(\pi) = \sum_{\pi \leq_r \tau} (-1)^{\ell(\tau)} \left (\prod_{B_i \in \pi}\lambda_i! \right ) \tau.\]
This result agrees with Aguiar and Ardila's calculation in \cite{GPHopf}.

%%%%%%%%%
%%%%%%%%%
%%%%%%%%%
\subsection{Hypergraphs and Simplicial Complexes}
Throughout the examples, we have already encountered the Hopf monoid of graphs. We calculated its subspecies of primitives, proved that it is cofree, and proved self-duality. We now extend these results to hypergraphs and to the submonoid of simplicial complexes.

A \textbf{hypergraph} $\mathcal{G}$ on vertex set $I$ is the tuple $(I, E)$ where $E \subset 2^I$ that does not contain the empty set or any singleton. The elements in $E$ are called hyperedges.We can define a species by
 \[ \mbf{HG}[I] = \{ \mathcal{G} \; \lvert \; \text{$\mathcal{G}$ is a hypergraph on vertex set $I$} \}.\]
 Define a poset structure by $\mathcal{H} \leq \mathcal{G}$ if and only if $E(\mathcal{H}) \subset E(\mathcal{G})$. This makes $\mbf{HG}$ a poset species. If $\mathcal{G}_1$ is a hypergraph on $S$ and $\mathcal{G}_2$ is a hypergraph on $T$, define their product by
 \[m_{S,T}(\mc{G}_1, \mc{G}_2) = (S \sqcup T, E(\mc{G}_1) \cup E(\mc{G}_2)). \]
If $\mc{G}$ is a hypergraph on $I$ and $S \subset I$, then the restriction of $\mc{G}$ to $S$, denoted by $\mc{G}|S$ is the hypergraph
 \[ \mc{G}|S = \left (S, \{ U \in E(\mc{G}) \; \lvert \; U \subset 2^S\} \right).\]
 
Then

\begin{lemma} The poset species $\mbf{HG}$ equipped with the multiplication $m_{S,T}$ above and comultiplication $\Delta_{S,T}: \mc{G} \mapsto (\mc{G}|S, \mc{G}|T)$ is a poset Hopf monoid.
\end{lemma}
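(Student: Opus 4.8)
The plan is to verify each axiom of a poset Hopf monoid in turn, following the template already established for $\mbf{Graphs}$, since union of hyperedge families plays the role of disjoint union and restriction to a subset of vertices plays the role of graph restriction. The essential structural facts I would isolate first are three: (i) both $m_{S,T}$ and $\Delta_{S,T}$ are order-preserving; (ii) restriction is transitive, i.e. $(\mc{G}|S)|S' = \mc{G}|S'$ whenever $S' \subseteq S$; and (iii) every hyperedge of a hypergraph on vertex set $S_i$ is contained in $S_i$. Before anything else I would note that neither operation leaves the species: union and restriction only form unions or subfamilies of existing hyperedge families, so the resulting families still omit the empty set and all singletons and hence remain valid hypergraphs.

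For the monoid axioms, order-preservation of $m_{S,T}$ is immediate, since $E(\mc{H}_1)\subseteq E(\mc{G}_1)$ and $E(\mc{H}_2)\subseteq E(\mc{G}_2)$ force $E(\mc{H}_1)\cup E(\mc{H}_2)\subseteq E(\mc{G}_1)\cup E(\mc{G}_2)$; associativity follows from associativity of set union; unitality holds because the unique hypergraph on $\emptyset$ has no edges, so adjoining it changes nothing; and naturality is the statement that relabelling commutes with union. Dually, $\Delta_{S,T}$ is order-preserving because restriction sends a subfamily to a subfamily, counitality holds because $\mc{G}|I = \mc{G}$ and $\mc{G}|\emptyset$ has no edges (any edge contained in $\emptyset$ would be empty), and naturality is again the compatibility of relabelling with restriction. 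The one step requiring fact (ii) is coassociativity: chasing $\mc{G}$ through both sides of the diagram for $I = S\sqcup T\sqcup R$ produces the triple $(\mc{G}|S,\mc{G}|T,\mc{G}|R)$ on each path precisely because $(\mc{G}|(T\sqcup R))|T = \mc{G}|T$, which is transitivity of restriction (a hyperedge survives iff it is a subset of $T$ and lies in $E(\mc{G})$).

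The main obstacle, as in every Hopf monoid verification, is the compatibility axiom. Here I would take $I = S_1\sqcup S_2 = T_1\sqcup T_2$ with $A,B,C,D$ the four pairwise intersections as in the definition, and compute both paths of the square on a pair $(\mc{G}_1,\mc{G}_2)\in\mbf{HG}[S_1]\times\mbf{HG}[S_2]$. The top path gives, in its $T_1$-component, the family $\{U \in E(\mc{G}_1)\cup E(\mc{G}_2) : U\subseteq T_1\}$, while the bottom path gives $\{U\in E(\mc{G}_1): U\subseteq A\}\cup\{U\in E(\mc{G}_2): U\subseteq C\}$. These agree by fact (iii): a hyperedge of $\mc{G}_1$ is automatically contained in $S_1$, so if it also lies in $T_1$ it lies in $S_1\cap T_1 = A$, and symmetrically a hyperedge of $\mc{G}_2$ contained in $T_1$ lies in $C$; conversely every edge counted on the bottom clearly lies in $T_1$. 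The $T_2$-component is identical with $B,D$ in place of $A,C$, so both paths coincide and the square commutes. This completes the verification that $\mbf{HG}$ is a poset Hopf monoid.
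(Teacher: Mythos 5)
Your proof is correct, but it takes a genuinely different route from the paper's. The paper disposes of the Hopf monoid axioms in a single sentence by citing the earlier work \cite{Simp}, where $\mbf{HG}$ with exactly these structure maps is shown to be a set Hopf monoid; the only thing the paper then verifies is the genuinely new, poset-theoretic content of the lemma, namely that $m_{S,T}$ and $\Delta_{S,T}$ are order-preserving, which (as you also observe) is immediate because union and restriction of hyperedge families preserve containment of edge sets. You instead verify everything from scratch: closure of the species under both operations, (co)unitality, (co)associativity, naturality, order-preservation, and---the only step with real content---the compatibility square, which you correctly reduce to the observation that a hyperedge of $\mc{G}_1 \in \mbf{HG}[S_1]$ lying inside $T_1$ automatically lies inside $A = S_1 \cap T_1$, and symmetrically for $\mc{G}_2$ and $C$; your handling of coassociativity via transitivity of restriction, $(\mc{G}|(T \sqcup R))|T = \mc{G}|T$, is likewise sound. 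What your approach buys is a self-contained argument independent of the citation, and it makes visible exactly which set-theoretic identities drive the Hopf compatibility; what the paper's approach buys is brevity and a correct emphasis on the one claim that is not prior art, the order-preservation that upgrades a known Hopf monoid to a poset Hopf monoid.
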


\begin{proof} It is proved in \cite{Simp} that this forms a Hopf monoid. The only remaining work is to show that the multiplication and comultiplication maps are order-preserving. Since disjoint union and restriction are both order-preserving operations, the lemma follows.
\end{proof}

This Hopf monoid has been studied in \cite{Simp} and \cite{BBM}. Note that this Hopf monoid is different from the Hopf monoid of hypergraphic polytopes studied by Aguiar and Ardila in \cite{GPHopf} where they allow hypergraphs to have repeated hyperedges.

Let $\mc{K}(S,T)$ be the complete bipartite hypergraph on vertex sets $S$ and $T$. This graph contains all hyperedges which are not strictly contained in $2^S$ and $2^T$. Define the free product on hypergraphs by
 \[ \mc{G}_1 \Osq \mc{G}_2 = \left (I, E(\mc{G}_1) \sqcup E(\mc{G}_2) \cup E(\mc{K}(S,T)) \right). \]
Just as in the case of graphs, we have

\begin{lemma} The poset comonoid $(\mbf{HG},\Delta)$ and the poset monoid $(\mbf{HG},\Box)$ form an adjoint pair with $\Delta_{S,T} \dashv \Box_{S,T}$.
\end{lemma}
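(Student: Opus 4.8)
The plan is to unwind the definition of an adjoint pair with $\Delta \dashv \Box$ and reduce the whole statement to an elementary containment of edge sets, exactly mirroring the graph example treated earlier. Fixing a decomposition $I = S \sqcup T$ together with hypergraphs $\mc{G} \in \mbf{HG}[I]$, $\mc{H}_1 \in \mbf{HG}[S]$, and $\mc{H}_2 \in \mbf{HG}[T]$, the goal is to establish
\[ (\mc{G}|S, \mc{G}|T) \leq (\mc{H}_1, \mc{H}_2) \iff \mc{G} \leq \mc{H}_1 \Box \mc{H}_2. \]
Using the product poset order on the left and the definition of the order on $\mbf{HG}$ (edge-set inclusion), the left-hand side unpacks to $E(\mc{G}|S) \subseteq E(\mc{H}_1)$ and $E(\mc{G}|T) \subseteq E(\mc{H}_2)$, while the right-hand side unpacks to $E(\mc{G}) \subseteq E(\mc{H}_1) \sqcup E(\mc{H}_2) \cup E(\mc{K}(S,T))$. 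So the lemma becomes a purely set-theoretic equivalence between these two conditions.

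The key observation I would set up is a typing of hyperedges relative to the decomposition. Every hyperedge $U$ of a hypergraph on $I$ falls into exactly one of three classes: those with $U \subseteq S$, those with $U \subseteq T$, and the crossing edges with $U \cap S \neq \emptyset$ and $U \cap T \neq \emptyset$. Since $S$ and $T$ are disjoint and every hyperedge is nonempty, these three classes are mutually exclusive and exhaustive. By definition $E(\mc{K}(S,T))$ is precisely the class of crossing edges, whereas $E(\mc{H}_1) \subseteq 2^S$ contains only edges inside $S$ and $E(\mc{H}_2) \subseteq 2^T$ only edges inside $T$; hence the three summands on the right-hand side are disjoint and are indexed by these three classes.

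With this typing in place I would finish by testing the membership $U \in E(\mc{G})$ against the right-hand condition class by class: a crossing edge always lies in $E(\mc{K}(S,T))$, so it never obstructs the containment; an edge with $U \subseteq S$ can only possibly lie in $E(\mc{H}_1)$, so the right-hand condition restricted to such edges is exactly $E(\mc{G}|S) \subseteq E(\mc{H}_1)$, and symmetrically for $T$-edges. Consequently the right-hand containment holds if and only if $E(\mc{G}|S) \subseteq E(\mc{H}_1)$ and $E(\mc{G}|T) \subseteq E(\mc{H}_2)$, which is exactly the left-hand side, giving both implications simultaneously. The only real content here is verifying that the three-way typing is a genuine partition and that $\mc{K}(S,T)$ matches the crossing class on the nose; there is no deep obstacle, just the bookkeeping around the definition of $\mc{K}(S,T)$, and this part runs completely parallel to the graph case already verified in the paper.
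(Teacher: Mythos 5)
Your proof is correct and matches the paper's intent exactly: the paper gives no written proof of this lemma, asserting it holds ``just as in the case of graphs,'' and your three-way typing of hyperedges (inside $S$, inside $T$, or crossing, with the crossing class being precisely $E(\mc{K}(S,T))$) is exactly the omitted bookkeeping that verification requires. No gaps; the class-by-class argument handles both directions of the equivalence simultaneously, as you note.
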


Given a hypergraph $\mc{G} = (I, E)$, the \textbf{complement} of $\mc{G}$ is given by
 \[ \mc{G}^c = (I, \{ U \subset 2^I \; \lvert \; U \not \in E(\mc{G}) \text{ and $|U| \geq 2$} \})\]
A hypergraph is connected if it is not the disjoint union of two other hypergraphs. As with graphs, we have
 \[ (\mc{G}_1^c \Osq \mc{G}_2^c)^c = \mc{G}_1 \sqcup \mc{G}_2.\]
This means that if $\mc{G}$ is the complement of a connected graph, then it is $\Osq$-indecomposable. Theorem \ref{thm:prim} and Theorem \ref{thm:fock} imply the following theorem.

\begin{proposition} The primitive species $\mathcal{P}(\kk \mbf{HG})[I]$ of hypergraphs has a basis given by 
 \[ \omega_{\mc{G}} = \sum_{\mc{G}\leq \mc{H}} \mu(\mc{G},\mc{H}) \mc{H} ,\]
for hypergraphs $\mc{G}$ on $I$ whose complement is connected. 

Further, the subspace of primitives of the Hopf algebra of hypergraphs $\mathcal{F}(\kk \mbf{HG})$ has a basis given by
 \[ [\omega_{\mc{G}} ] = \sum_{\mc{G} \leq \mc{H}} \mu(\mc{G},\mc{H}) [\mc{H}],\]
for hypergraphs $[\mathcal{G}]$ whose complement is connected.
\end{proposition}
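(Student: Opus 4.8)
The plan is to deduce this proposition directly from the general machinery already established, namely Theorem~\ref{thm:prim} (for the species-level primitives) together with Theorem~\ref{thm:fock} (to transfer to the Hopf algebra obtained via the Fock functor). The only genuinely new content specific to hypergraphs is the identification of the $\Box$-indecomposable elements, so that is where I would focus the actual argument; the rest is an appeal to previously proved theorems applied to the adjoint pair $(\mbf{HG},\Delta) \dashv (\mbf{HG},\Box)$ whose existence is asserted in the preceding lemma.

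First I would pin down the combinatorial claim that the $\Box$-indecomposable hypergraphs are exactly the complements of connected hypergraphs. To do this I would verify the stated identity $(\mc{G}_1^c \Osq \mc{G}_2^c)^c = \mc{G}_1 \sqcup \mc{G}_2$ by a direct check on hyperedge sets: a hyperedge $U$ with $|U|\geq 2$ lies in $\mc{G}_1 \sqcup \mc{G}_2$ iff $U$ is contained in $2^S$ or $2^T$ and is a hyperedge of the corresponding piece, and one confirms this is complementary to the description of $\mc{K}(S,T)$ used in $\Box$. This identity shows that complementation is an anti-isomorphism exchanging the disjoint-union monoid with the free-product monoid $\Box$, so a hypergraph $\mc{G}$ is $\Box$-indecomposable precisely when $\mc{G}^c$ is $\sqcup$-indecomposable, i.e. connected. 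I would note this mirrors the graph case verbatim, so the verification is routine.

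With the indecomposables identified, the first statement follows immediately from Theorem~\ref{thm:prim}: since $(\mbf{HG},\Delta)$ and $(\mbf{HG},\Box)$ form an adjoint pair with $\Delta \dashv \Box$, the set $\{\omega_{\mc{G}} \mid \mc{G}\ \Box\text{-indecomposable}\}$ is a basis for the primitives $\mathcal{P}(\kk\mbf{HG})[I]$, and $\omega_{\mc{G}} = \sum_{\mc{G}\leq\mc{H}}\mu(\mc{G},\mc{H})\mc{H}$ by definition of the inverted basis. For the second statement I would invoke Theorem~\ref{thm:fock}, which states that when such an adjoint pair is present, the primitives of the Hopf algebra $\mathcal{F}(\kk\mbf{HG})$ have basis $\{[\omega_{\mc{G}}]\}$ ranging over the same $\Box$-indecomposable $\mc{G}$; rewriting $[\omega_{\mc{G}}]$ in the linearized basis gives the displayed formula $\sum_{\mc{G}\leq\mc{H}}\mu(\mc{G},\mc{H})[\mc{H}]$.

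The main obstacle, such as it is, is not an obstacle of proof technique but of careful bookkeeping in the complementation identity: one must be attentive that the complement $\mc{G}^c$ only toggles membership among subsets $U$ with $|U|\geq 2$ (singletons and the empty set are never hyperedges), and that $\mc{K}(S,T)$ contains exactly the hyperedges not strictly contained in $2^S$ or $2^T$. Getting these conventions aligned is what makes $(\mc{G}_1^c \Osq \mc{G}_2^c)^c = \mc{G}_1 \sqcup \mc{G}_2$ hold on the nose, and hence what lets connectedness of $\mc{G}^c$ characterize $\Box$-indecomposability. Once that identity is confirmed, everything else is a direct citation of the general theorems.
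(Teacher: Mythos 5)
Your proposal is correct and matches the paper's own route: the paper likewise establishes the identity $(\mc{G}_1^c \Osq \mc{G}_2^c)^c = \mc{G}_1 \sqcup \mc{G}_2$ to identify the $\Box$-indecomposables as complements of connected hypergraphs, and then cites Theorem~\ref{thm:prim} and Theorem~\ref{thm:fock} verbatim for the two halves of the proposition. If anything, you are slightly more careful than the paper, which only states the implication ``complement connected $\Rightarrow$ $\Box$-indecomposable,'' whereas the basis claim needs the biconditional you spell out.
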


 Moreover, if $\mc{G}_1 \in \mbf{HG}[S], \mc{G}_2 \in \mbf{HG}[T]$ and $\mc{H} \in \mbf{HG}[I]$ with $I = S \sqcup T$, then we also have
 \[ m_{S,T}(\mc{G}_1, \mc{G}_2) \leq \mc{H} \iff (\mc{G}_1, \mc{G}_2) \leq (\mc{H}|S, \mc{H}|T).\]
This means that $m_{S,T} \dashv \Delta_{S,T}$. So, $\mbf{HG}$ is self-adjoint and hence commutative and cocommutative. Theorem \ref{thm: stover} implies the following result.

\begin{proposition} The Hopf monoid of hypergraphs $\kk \mbf{HG}$ is free commutative, free cocommutative, and self-dual.
\end{proposition}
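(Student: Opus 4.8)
The plan is to reduce the proposition entirely to an application of Stover's extension of Cartier--Milnor--Moore, Theorem \ref{thm: stover}, so that the real work amounts to certifying its hypotheses. First I would record that the discussion immediately preceding the statement already verifies the adjunction $m_{S,T} \dashv \Delta_{S,T}$ for $\mbf{HG}$, which is exactly the condition that $\mbf{HG}$ be a self-adjoint Hopf monoid. By the classification in Theorem \ref{thm: classification self-adjoint}, any self-adjoint Hopf monoid is automatically commutative and cocommutative, so this single observation supplies both needed structural properties at once.

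Alternatively, and perhaps more transparently, I would verify commutativity and cocommutativity by hand: the multiplication $m_{S,T}(\mc{G}_1,\mc{G}_2) = (S \sqcup T, E(\mc{G}_1) \cup E(\mc{G}_2))$ does not depend on the order of its inputs since union of hyperedge sets is symmetric, and the comultiplication $\Delta_{S,T}(\mc{G}) = (\mc{G}|S, \mc{G}|T)$ is cocommutative because the two restrictions enter symmetrically. Either route establishes that $\kk\mbf{HG}$ is a commutative and cocommutative vector space Hopf monoid in the usual sense.

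With commutativity and cocommutativity in hand, Theorem \ref{thm: stover} applies verbatim with $\mbf{H} = \kk\mbf{HG}$: it yields the isomorphism $\kk\mbf{HG} \cong \mbf{S}(\mathcal{P}(\kk\mbf{HG}))$ and, as its stated corollary, that $\kk\mbf{HG}$ is free commutative, free cocommutative, and self-dual, delivering all three conclusions simultaneously. Here the free generating species is the species of primitives $\mathcal{P}(\kk\mbf{HG})$, which the preceding proposition has already exhibited explicitly as spanned by the inverted basis elements $\omega_{\mc{G}}$ for hypergraphs $\mc{G}$ whose complement is connected. I do not expect a genuine obstacle: the substantive content was the adjunction $m \dashv \Delta$, and the only point deserving a line of care is confirming that the set-level symmetry of $m$ and $\Delta$ linearizes to commutativity and cocommutativity in the category of vector space Hopf monoids, which is immediate from the definitions. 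I would close by noting that this is precisely the argument already carried out for graphs and for set partitions earlier in the paper.
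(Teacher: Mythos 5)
Your proposal is correct and follows essentially the same route as the paper: the paper likewise observes that the adjunction $m_{S,T} \dashv \Delta_{S,T}$ makes $\mbf{HG}$ self-adjoint, hence commutative and cocommutative, and then invokes Theorem \ref{thm: stover} to obtain all three conclusions at once. Your optional by-hand check of commutativity and cocommutativity is a harmless (and valid) shortcut, but it does not change the substance of the argument.
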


Since this Hopf monoid is self-adjoint, Theorem \ref{thm:antipode} gives the following grouping-free formula for this Hopf monoid.

\begin{theorem} Let $\leq_r$ be the reassembly poset on the Hopf monoid of hypergraphs $\mbf{HG}$. Let $p_{[\mathcal{H},\mathcal{G}]}$ denote the characteristic polynomial of the interval $[\mathcal{H},\mathcal{G}]$ in reassembly poset. Then,
\[ S_I(\mathcal{G}) = \sum_{\mathcal{G} \leq_r \mathcal{H}} p_{[\mathcal{G},\mathcal{H}]}(-1) \; \mathcal {H}.\]
 
 \end{theorem}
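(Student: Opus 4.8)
The plan is to derive the formula as an immediate specialization of the general antipode theorem, Theorem~\ref{thm:antipode}, rather than to compute anything from scratch. Theorem~\ref{thm:antipode} produces exactly this expression for \emph{any} self-adjoint Hopf monoid endowed with its reassembly poset, so the entire task reduces to verifying that $\mbf{HG}$ satisfies those hypotheses; and essentially all of the required input has already been assembled in the preceding lemmas and propositions of this subsection.

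First I would record that $(\mbf{HG},m,\Delta)$ is a Hopf monoid, which is the content of the earlier Lemma, and that the displayed equivalence $m_{S,T}(\mc{G}_1,\mc{G}_2)\leq\mc{H}\iff(\mc{G}_1,\mc{G}_2)\leq(\mc{H}|S,\mc{H}|T)$ exhibits $m_{S,T}\dashv\Delta_{S,T}$. Together these say that the monoid $(\mbf{HG},m)$ and the comonoid $(\mbf{HG},\Delta)$ form an adjoint pair, i.e. that $\mbf{HG}$ is self-adjoint with $\Delta\dashv m$. By Theorem~\ref{thm: classification self-adjoint} it is then commutative and cocommutative, and by Theorem~\ref{thm: Marberg Poset} the reassembly relation $\mc{G}\leq_r\mc{H}$, defined by $\mc{H}=m_{S_1,\ldots,S_k}\circ\Delta_{S_1,\ldots,S_k}(\mc{G})$, is a genuine poset on each $\mbf{HG}[I]$.

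Second, I would note that the characteristic polynomial $p_{[\mc{G},\mc{H}]}(t)$ in the statement is well-defined because $\leq_r$ is graded. This is already guaranteed by the argument in the proof of Theorem~\ref{thm:antipode}: Theorem~\ref{thm: stover} gives $\mbf{HG}\cong\mbf{S}(\mathcal{P}(\mbf{HG}))$, so every hypergraph has a unique unordered factorization under $m$, the grading being by the number of factors---concretely, the number of connected components of $\mc{G}$, since the $m$-indecomposables are precisely the connected hypergraphs. With all hypotheses in place, Theorem~\ref{thm:antipode} applies verbatim and yields $S_I(\mc{G})=\sum_{\mc{G}\leq_r\mc{H}}p_{[\mc{G},\mc{H}]}(-1)\,\mc{H}$.

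I do not expect a genuine obstacle here, since the result is a corollary of the general theory; the only place where care is warranted is in checking that the abstract hypotheses of Theorem~\ref{thm:antipode} really do hold for $\mbf{HG}$ and that the reassembly poset is the graded poset the theorem needs. The more substantive work---of the kind one sees in the graphs example---lies not in proving the formula but in giving a concrete combinatorial interpretation of $\leq_r$ and of the evaluations $p_{[\mc{G},\mc{H}]}(-1)$ for hypergraphs, which is a separate matter from establishing the identity itself.
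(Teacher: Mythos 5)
Your proposal is correct and follows essentially the same route as the paper: the paper also treats this theorem as an immediate corollary of Theorem~\ref{thm:antipode}, having just verified that $m_{S,T}$ and $\Delta_{S,T}$ form a Galois connection so that $\mbf{HG}$ is self-adjoint, hence commutative and cocommutative, hence carries the reassembly poset to which the general antipode formula applies. If anything, your write-up is slightly more careful than the paper's one-line justification, since you explicitly separate the edge-inclusion adjunction (which gives self-adjointness and, via Theorem~\ref{thm: classification self-adjoint}, commutativity and cocommutativity) from the reassembly poset structure (via Theorem~\ref{thm: Marberg Poset} and the gradedness argument from Stover's theorem) that Theorem~\ref{thm:antipode} actually requires.
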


This gives a new interpretation for the antipode of this Hopf monoid given by Benedetti, Bergeron, and Machacek \cite{BBM}. Notice that a hypergraph $\mathcal{G}$ is less than $\mathcal{G}$ in the reassembly poset if and only if there exists an ordered set partition $S_1 \sqcup \cdots \sqcup S_k$ of $I$ such that $\mathcal{H} = \prod \mathcal{G}|_{S_i}$. In the language of \cite{BBM}, this means that $\mathcal{H}$ is a flat of $\mathcal{G}$.

One commonly studied submonoid of $\mbf{HG}$ is the Hopf monoid of simplicial complexes defined in \cite{Simp}. A \textbf{simplicial complex} on ground set $I$ is a collection of subsets $\Gamma \subset 2^I$ that is downward closed; that is, if $Y \subset X \subset I$ and $X \in \Gamma$, then $Y \in \Gamma$. If $S \subset I$, then we can define the restriction of $\Gamma$ to $S$ by $\Gamma|S = \Gamma \cap 2^S$. Given a simplicial complex $\Gamma_1$ on ground set $S$ and a simplicial complex $\Gamma_2$ on ground set $T$, define their disjoint union by $\Gamma_1 \sqcup \Gamma_2 = \{A \subset I \; \lvert \; A \in \Gamma_1 \text{ or } A \in \Gamma_2$\}. This defines a Hopf monoid $\mbf{SC}[I] = \{ \text{simplicial complexes with label set $I$}\}$ with structure maps
 \[m_{S,T}(\Gamma_1,\Gamma_2) = \Gamma_1 \sqcup \Gamma_2\;, \]
and
 \[\Delta_{S,T}(\Gamma) = (\Gamma|S, \Gamma|T). \]
This comes with a natural poset structure where $\Gamma \leq \Phi$ if and only if every $X \in \Gamma$ is in $\Phi$. 

Since a simplicial complex can be interpreted as a special type of hypergraph, we see that $\mbf{SC}$ embeds as a Hopf monoid into $\mbf{HG}$. We have to be careful since this does not immediately imply that every result descends. To see this, notice that the free product of two simplicial complexes (viewed as hypergraphs) is no longer a simplicial complex. However, we still have that $m_{S,T} \dashv \Delta_{S,T}$. That is
 \[\Gamma_1 \sqcup \Gamma_2 \leq \Phi \iff (\Gamma_1, \Gamma_2) \leq (\Phi|S, \Phi|T). \]
This means that $\mbf{SC}$ is self-adjoint. Thus, we have the following result.

\begin{theorem} The Hopf monoid of simplicial complexes $\kk \mbf{SC}$ is free commutative, free cocommutative, and self-dual.
\end{theorem}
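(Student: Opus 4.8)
The plan is to reduce the whole statement to two results already available: Stover's theorem (Theorem~\ref{thm: stover}), which yields all three conclusions simultaneously for any commutative and cocommutative Hopf monoid, and the classification of self-adjoint Hopf monoids (Theorem~\ref{thm: classification self-adjoint}), which identifies commutativity-plus-cocommutativity with self-adjointness. Thus the only thing that genuinely needs proving is that $\mbf{SC}$ is self-adjoint, i.e.\ that the monoid $(\mbf{SC},m)$ and the comonoid $(\mbf{SC},\Delta)$ form an adjoint pair; everything else is then formal.

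The delicate point, and the reason this does \emph{not} descend automatically from the hypergraph result, is that the adjunction used to analyze $\mbf{HG}$ (and $\mbf{Graphs}$) is $\Delta \dashv \Box$, built from the free product $\Box$, and $\Box$ does not preserve the subspecies $\mbf{SC}$. So that machinery cannot be transported here. Instead I would work with the complementary adjoint relation $m \dashv \Delta$, where $m$ is disjoint union and $\Delta$ is restriction; both operations visibly keep us inside $\mbf{SC}$ and are order-preserving. Concretely, for $\Gamma_1 \in \mbf{SC}[S]$, $\Gamma_2 \in \mbf{SC}[T]$, and $\Phi \in \mbf{SC}[I]$ with $I = S \sqcup T$, I would verify
\[ \Gamma_1 \sqcup \Gamma_2 \leq \Phi \iff (\Gamma_1,\Gamma_2) \leq (\Phi|S,\Phi|T). \]
Both directions are a direct unwinding of the poset order, which is just containment of faces. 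For the forward implication, if $\Gamma_1 \sqcup \Gamma_2 \subseteq \Phi$ then $\Gamma_1 \subseteq \Phi$, and since every face of $\Gamma_1$ lies in $2^S$ we get $\Gamma_1 \subseteq \Phi \cap 2^S = \Phi|S$, and symmetrically $\Gamma_2 \subseteq \Phi|T$. For the converse, $\Gamma_1 \subseteq \Phi|S \subseteq \Phi$ and $\Gamma_2 \subseteq \Phi|T \subseteq \Phi$, so their union of faces $\Gamma_1 \sqcup \Gamma_2$ is contained in $\Phi$. This is exactly the Galois connection $m \dashv \Delta$.

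Having established self-adjointness, Theorem~\ref{thm: classification self-adjoint} gives that $\kk\mbf{SC}$ is commutative and cocommutative, and Theorem~\ref{thm: stover} then delivers free commutativity, free cocommutativity, and self-duality at once. The main obstacle is conceptual rather than computational: one must recognize that the $\Delta \dashv \Box$ argument fails because $\Box$ exits $\mbf{SC}$, and that the correct tool is the other adjunction $m \dashv \Delta$. Once the right adjoint pair is identified, the verification above is routine, since disjoint union and restriction are manifestly order-preserving closed operations on simplicial complexes.
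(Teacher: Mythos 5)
Your proposal is correct and is essentially the paper's own argument: the paper likewise observes that the free-product adjunction $\Delta \dashv \Box$ cannot be transported to $\mbf{SC}$ (the free product of simplicial complexes is not a simplicial complex), establishes instead the Galois connection $m_{S,T} \dashv \Delta_{S,T}$, namely $\Gamma_1 \sqcup \Gamma_2 \leq \Phi \iff (\Gamma_1,\Gamma_2) \leq (\Phi|S,\Phi|T)$, and then concludes via self-adjointness together with Theorem \ref{thm: classification self-adjoint} and Theorem \ref{thm: stover}. The only difference is that you write out the routine verification of the Galois connection, which the paper asserts without proof.
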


Further, we calculate the space of primitives. The simplicial complexes that are $m$-indecomposable are those that can not be written as a disjoint union of other simplicial complexes. These are called the connected simplicial complexes. Since $m_{S,T} \dashv \Delta_{S,T}$, we need to reverse the poset order to apply Theorem \ref{thm:prim}. This gives

\begin{proposition} The space of primitives $\mathcal{P}(\kk \mbf{SC})[I]$ has a basis given by 
 \[ \omega^{\Gamma} = \sum_{\Phi \leq \Gamma} \mu(\Phi, \Gamma) \Gamma,\]
for connected $\Gamma \in \mbf{SC}[I]$.
\end{proposition}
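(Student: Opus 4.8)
The plan is to reduce the statement to Theorem \ref{thm:prim} by passing to the opposite poset. The discussion preceding the proposition establishes that $\mbf{SC}$ is self-adjoint with $m_{S,T} \dashv \Delta_{S,T}$, i.e.
\[ \Gamma_1 \sqcup \Gamma_2 \leq \Phi \iff (\Gamma_1, \Gamma_2) \leq (\Phi|S, \Phi|T). \]
In the terminology of adjoint pairs this is the relation $\Box \dashv \Delta$ with $\Box = m$. Since Theorem \ref{thm:prim} is phrased for the case $\Delta \dashv \Box$, I would first replace $\mbf{SC}[I]$ by its opposite poset $\mbf{SC}[I]^{op}$. Reversing the order interchanges the two Galois-connection conditions (as observed just after the definition of adjoint pair), so the same monoid and comonoid satisfy $\Delta \dashv m$ on $\mbf{SC}[I]^{op}$, and Theorem \ref{thm:prim} now applies verbatim.

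Next I would identify the correct inverted basis. The Möbius function of the reversed poset satisfies $\mu^{op}(\Gamma,\Phi) = \mu(\Phi,\Gamma)$, and the interval above $\Gamma$ in $\mbf{SC}[I]^{op}$ is $\{\Phi \mid \Phi \leq \Gamma\}$, so the inverted basis element of $\mbf{SC}[I]^{op}$ attached to $\Gamma$ is
\[ \sum_{\Gamma \leq^{op} \Phi} \mu^{op}(\Gamma,\Phi)\, \Phi \;=\; \sum_{\Phi \leq \Gamma} \mu(\Phi,\Gamma)\, \Phi \;=\; \omega^{\Gamma}. \]
This is exactly the ``down'' inverted basis flagged in the remark following Theorem \ref{thm: inverted basis}. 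Theorem \ref{thm:prim}, read in $\mbf{SC}[I]^{op}$, then asserts that $\omega^{\Gamma}$ is primitive if and only if $\Gamma$ is indecomposable with respect to the monoid $(\mbf{SC}, m)$, and that the collection of such $\omega^{\Gamma}$ forms a basis for the primitives of $\kk\mbf{SC}[I]$.

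Finally I would check that the $m$-indecomposable simplicial complexes are precisely the connected ones. Because $m_{S,T}(\Gamma_1,\Gamma_2) = \Gamma_1 \sqcup \Gamma_2$, a complex $\Gamma$ is decomposable exactly when it splits as a disjoint union over a nontrivial partition $I = S \sqcup T$ of its ground set, which is the definition of disconnectedness; hence $m$-indecomposable coincides with connected. Combining this identification with the previous step gives the claimed basis $\{\omega^{\Gamma} \mid \Gamma \text{ connected}\}$ for $\mathcal{P}(\kk\mbf{SC})[I]$.

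The only real subtlety—more bookkeeping than genuine obstacle—is keeping the order reversal straight. One must note that indecomposability is a property of the monoid structure alone and is therefore untouched by passing to the opposite poset, and that the Galois connection genuinely flips direction under order reversal so that the hypothesis $\Delta \dashv \Box$ of Theorem \ref{thm:prim} is actually met. The crucial contrast with the graph and hypergraph cases is that there one has $\Delta \dashv \Box$ for $\Box$ the free product, producing the up-inverted basis $\omega_G$ indexed by complements of connected objects, whereas here the natural adjunction is $m \dashv \Delta$, forcing the down-inverted basis $\omega^{\Gamma}$ indexed directly by connected complexes; everything else is a routine specialization of the general machinery.
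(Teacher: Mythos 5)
Your proposal is correct and follows essentially the same route as the paper: the paper likewise observes that since $m_{S,T} \dashv \Delta_{S,T}$ one must reverse the poset order to apply Theorem \ref{thm:prim}, and that the $m$-indecomposable simplicial complexes are exactly the connected ones. Your write-up simply makes explicit the bookkeeping the paper leaves implicit (passing to the opposite poset, $\mu^{op}(\Gamma,\Phi) = \mu(\Phi,\Gamma)$, and the resulting down-inverted basis $\omega^{\Gamma}$, as flagged in the remark following Theorem \ref{thm: inverted basis}).
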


The reassembly poset $\leq_r$ on $\mbf{SC}$ is easier to describe that that of $\mbf{HG}$. For a simplicial complex $\Gamma$, let $\Gamma^{(1)}$ denote its 1-skeleton; that is, the set of subsets of size 2 in $\Gamma$. This forms a graph. For any flat $F \subset \Gamma^{(1)}$, let $C_1 \sqcup \cdots \sqcup C_k$ denote the connected components of the flat. Then, define $\Gamma(F) = \Gamma|_{C_1} \sqcup \cdots \sqcup \Gamma|_{C_k}$.

\begin{lemma} In the reassembly poset, $\Gamma \leq_r \Phi$ if and only if $\Phi = \Gamma(F)$ for some flat $F \subset \Gamma^{(1)}$. Therefore, the interval $[\Gamma,\emptyset]$ in the reassembly poset is isomorphic to the interval $[\Gamma^{(1)},\emptyset]$ in the reassembly poset of graphs.
\end{lemma}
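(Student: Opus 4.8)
The plan is to first unwind the reassembly relation using the explicit structure maps of $\mbf{SC}$. Since $m_{S,T}(\Gamma_1,\Gamma_2) = \Gamma_1 \sqcup \Gamma_2$ and $\Delta_{S,T}(\Gamma) = (\Gamma|_S, \Gamma|_T)$, the composite $m_{S_1,\ldots,S_k}\circ\Delta_{S_1,\ldots,S_k}(\Gamma)$ equals $\Gamma|_{S_1} \sqcup \cdots \sqcup \Gamma|_{S_k}$. Hence $\Gamma \leq_r \Phi$ if and only if $\Phi = \Gamma|_{S_1} \sqcup \cdots \sqcup \Gamma|_{S_k}$ for some set partition $S_1 \sqcup \cdots \sqcup S_k = I$; equivalently, a face $A$ lies in $\Phi$ precisely when $A \in \Gamma$ and $A \subseteq S_i$ for some $i$.

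I would dispatch the reverse implication first, as it is immediate: if $F$ is a flat of $\Gamma^{(1)}$ with connected components $C_1, \ldots, C_k$, then by definition $\Gamma(F) = \Gamma|_{C_1} \sqcup \cdots \sqcup \Gamma|_{C_k}$, and since the $C_j$ form a set partition of $I$, the description above gives $\Gamma \leq_r \Gamma(F)$.

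For the forward implication I would suppose $\Gamma \leq_r \Phi$ via a partition $S_1, \ldots, S_k$ and set $F := \Phi^{(1)}$. The first observation is that taking the $1$-skeleton commutes with restriction and disjoint union, so $F = (\Gamma^{(1)})|_{S_1} \sqcup \cdots \sqcup (\Gamma^{(1)})|_{S_k}$; by the reassembly poset of graphs this says $\Gamma^{(1)} \leq_r F$, i.e.\ $F$ is a flat of $\Gamma^{(1)}$. It then remains to verify $\Phi = \Gamma(F)$. Writing $C_1, \ldots, C_m$ for the connected components of $F$, the inclusion $\Gamma(F) \subseteq \Phi$ is clear because each $C_j$ is contained in some $S_i$. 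For the reverse inclusion I would take $A \in \Phi$, so $A \in \Gamma$ with $A \subseteq S_i$ for some $i$; since $\Gamma$ is downward closed, every two-element subset of $A$ is an edge of $\Gamma^{(1)}$ lying inside $S_i$ and hence an edge of $F$, so all vertices of $A$ sit in a single component $C_j$, giving $A \in \Gamma|_{C_j} \subseteq \Gamma(F)$. This is the step where downward-closedness is genuinely used and where the argument departs from the hypergraph setting; I expect it to be the main obstacle.

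Finally, for the interval isomorphism I would argue that $\Phi \mapsto \Phi^{(1)}$ is a morphism of Hopf monoids $\mbf{SC} \to \mbf{Graphs}$, since it commutes with both $m$ and $\Delta$, and therefore preserves $\leq_r$. By the first part it restricts to a bijection from $[\Gamma,\emptyset]$ in $\mbf{SC}$ onto the flats of $\Gamma^{(1)}$, which constitute $[\Gamma^{(1)},\emptyset]$ in $\mbf{Graphs}$, with inverse $F \mapsto \Gamma(F)$. Checking that this inverse is also order-preserving, again because the $1$-skeleton commutes with the reassembly operations, then yields the desired poset isomorphism.
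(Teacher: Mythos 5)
Your proof is correct and follows essentially the same route as the paper: both directions come down to the fact that taking the $1$-skeleton commutes with restriction and disjoint union, so that a reassembly of $\Gamma$ induces a flat of $\Gamma^{(1)}$ and conversely. In fact your write-up is more complete than the paper's: the paper's forward direction only checks that $\Phi^{(1)}$ is a flat of $\Gamma^{(1)}$ and leaves implicit the remaining verification that $\Phi = \Gamma(\Phi^{(1)})$, which is exactly the step where you invoke downward-closedness to see that every face of $\Phi$ is a clique of the flat and hence lies in a single connected component. Your framing of the interval isomorphism via the Hopf monoid morphism $\Phi \mapsto \Phi^{(1)}$ is a clean packaging of what the paper dispatches with a ``therefore''; the one small detail neither side spells out is that $\Gamma(F)^{(1)} = F$ for a flat $F$ (needed so the two maps are mutually inverse), which follows directly from the defining property of flats, namely $F|_{C_j} = \Gamma^{(1)}|_{C_j}$ on each component $C_j$ of $F$.
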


\begin{proof} First, if $\Phi = \Gamma(F)$, then using the set partition $S_1 \sqcup \cdots \sqcup S_k$ induced by the connected components $C_1 \sqcup \cdots \sqcup C_k$ of $F$, we clearly see that $\Gamma(F) = \Gamma|_{C_1} \sqcup \cdots \sqcup \Gamma|_{C_k} = m_{S_1,\cdots,S_k}\circ \Delta_{S_1,\cdots,S_k}(\Gamma)$.

In the other direction, for any set partition $S_1 \sqcup \cdots \sqcup S_k = I$, we need to show that $(\Gamma|_{S_1})^{(1)} \sqcup \cdots \sqcup (\Gamma|_{S_k})^{(1)}$ is a flat of $\Gamma^{(1)}$. Since simplical complexes are downward closed, we have that $\Gamma|_{S_1}^{(1)} = \Gamma^{(1)}|S_1$. Therefore, $(\Gamma|_{S_1})^{(1)} \sqcup \cdots \sqcup (\Gamma|_{S_k})^{(1)} = \Gamma^{(1)}_{S_1} \sqcup \cdots \sqcup \Gamma^{(1)}_{S_k}$. Thus this is indeed a flat of $\Gamma^{(1)}$.
\end{proof}

The previous Lemma combined with our Theorem \ref{thm:antipode} gives us a new proof of the following result by Benedetti, Hallam, and Machacek \cite{Simp}.

\begin{theorem}\cite{Simp}
Let $acyc(G)$ denote the number of acyclic orientations of a graph. Then, the antipode for $\mbf{SC}$ is given by
 \[S_I(\Gamma) = \sum_{\substack{F \subset \Gamma^{(1)}\\ \text{flat}}} (-1)^{|I| - rk(F)} acyc(\Gamma^{(1)}/F) \; \Gamma(F). \]

\end{theorem}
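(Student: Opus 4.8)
The plan is to derive this as a direct specialization of Theorem~\ref{thm:antipode} to $\mbf{SC}$, using the preceding Lemma to translate the reassembly poset of simplicial complexes into the reassembly poset of graphs. Since we have already shown that $\mbf{SC}$ is self-adjoint (via $m_{S,T} \dashv \Delta_{S,T}$) and that its reassembly poset is graded, Theorem~\ref{thm:antipode} applies verbatim and gives
\[ S_I(\Gamma) = \sum_{\Gamma \leq_r \Phi} p_{[\Gamma,\Phi]}(-1)\; \Phi. \]
First I would reindex this sum. By the preceding Lemma, the elements $\Phi$ with $\Gamma \leq_r \Phi$ are exactly those of the form $\Phi = \Gamma(F)$ as $F$ ranges over the flats of the $1$-skeleton $\Gamma^{(1)}$, and the assignment $\Phi \mapsto \Phi^{(1)}$ is the inverse bijection (so $\Gamma(F)^{(1)} = F$). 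Thus the sum becomes a sum over flats $F \subset \Gamma^{(1)}$.

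Next I would match the characteristic polynomials. The Lemma supplies a poset isomorphism $[\Gamma,\emptyset] \cong [\Gamma^{(1)},\emptyset]$ carrying $\Gamma(F)$ to $F$; restricting it yields $[\Gamma, \Gamma(F)] \cong [\Gamma^{(1)}, F]$ for every flat $F$. To conclude $p_{[\Gamma,\Gamma(F)]}(t) = p_{[\Gamma^{(1)},F]}(t)$, the one point requiring care is that this isomorphism preserves the grading $\ell$ used to define the characteristic polynomial. This I would check directly: for a flat $F$ with connected components $C_1, \ldots, C_k$, the number of $\mbf{SC}$-connected components of $\Gamma(F) = \Gamma|_{C_1} \sqcup \cdots \sqcup \Gamma|_{C_k}$ equals $k$, because each $\Gamma|_{C_i}$ has $1$-skeleton $\Gamma^{(1)}|_{C_i} \supseteq F|_{C_i} = C_i$ and is therefore connected; this count is precisely $\ell(F)$ in the graph reassembly poset. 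Hence $\ell$ transfers under the isomorphism and the two characteristic polynomials agree, in particular at $t = -1$.

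Finally I would import the computation already carried out for $\mbf{Graphs}$. There, combining Theorem~\ref{thm:antipode} with Zaslavsky's evaluation of the characteristic polynomial of a graphical intersection lattice gave $p_{[G,H]}(-1) = (-1)^{|I| - rk(H)}\, acyc(G/H)$. Applying this with $G = \Gamma^{(1)}$ and $H = F$ gives $p_{[\Gamma^{(1)},F]}(-1) = (-1)^{|I| - rk(F)}\, acyc(\Gamma^{(1)}/F)$, and substituting into the reindexed sum yields
\[ S_I(\Gamma) = \sum_{\substack{F \subset \Gamma^{(1)}\\ \text{flat}}} (-1)^{|I| - rk(F)}\, acyc(\Gamma^{(1)}/F)\; \Gamma(F), \]
as claimed. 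The only genuine obstacle is the grading-preservation check in the middle paragraph: the Lemma is stated as an isomorphism of bare posets, so transferring the characteristic polynomial (rather than merely the M\"obius number, which is a purely order-theoretic invariant) requires the extra verification that $\ell$ corresponds on both sides. Everything else is a bookkeeping specialization of results proved earlier.
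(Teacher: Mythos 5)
Your proof is correct and follows exactly the route the paper takes: specialize Theorem~\ref{thm:antipode} to the self-adjoint Hopf monoid $\mbf{SC}$, use the preceding Lemma to reindex the sum over flats $F \subset \Gamma^{(1)}$ and identify the interval $[\Gamma,\Gamma(F)]$ in the reassembly poset with $[\Gamma^{(1)},F]$ in the graph reassembly poset, then evaluate $p_{[\Gamma^{(1)},F]}(-1)$ via the Zaslavsky computation already carried out in the $\mbf{Graphs}$ example. Your explicit verification that the grading $\ell$ transfers (each $\Gamma|_{C_i}$ is connected because its $1$-skeleton contains the spanning connected graph $F|_{C_i}$, so $\ell(\Gamma(F)) = \ell(F)$) is a detail the paper's one-sentence proof leaves implicit, but it is a refinement of the same argument rather than a different one.
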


%%%%%%%%%
%%%%%%%%%
%%%%%%%%%

\bibliography{main.bib}

\begin{thebibliography}{10}

\bibitem{GPHopf}
M.~{Aguiar} and F.~{Ardila}, ``{Hopf monoids and generalized permutahedra},''
  {\em ArXiv e-prints}, Sept. 2017, 1709.07504.

\bibitem{Simp}
C.~Benedetti, J.~Hallam, and J.~Machacek, ``{Combinatorial Hopf algebras of
  simplicial complexes},'' {\em SIAM Journal on Discrete Mathematics}, vol.~30,
  no.~3, pp.~1737--1757, 2016, 1505.04458.

\bibitem{HopfDreams}
N.~{Bergeron}, C.~{Ceballos}, and V.~{Pilaud}, ``{Hopf dreams},'' {\em ArXiv
  e-prints}, July 2018, 1807.03044.

\bibitem{zelevinsky2006representations}
A.~Zelevinsky, {\em Representations of Finite Classical Groups: A Hopf Algebra
  Approach}.
\newblock Lecture Notes in Mathematics, Springer Berlin Heidelberg, 2006.

\bibitem{PrimMatroids}
W.~Crapo, H. nd~Schmitt, ``{Primitive elements in the matroid-minor Hopf
  algebra},'' {\em Journal of Algebraic Combinatorics}, vol.~28, 12 2005,
  0511033.

\bibitem{Marberg}
E.~Marberg, ``{Strong Forms of Linearization for Hopf Monoids in Species},''
  {\em J. Algebraic Comb.}, vol.~42, pp.~391--428, Sept. 2015, 1312.5074.

\bibitem{Rota1}
G.~Rota, ``{On the foundations of combinatorial theory {I}. Theory of
  {M}{\"o}bius functions},'' {\em Zeitschrift f{\"u}r
  Wahrscheinlichkeitstheorie und verwandte Gebiete}, vol.~2, no.~4,
  pp.~340--368, 1964.

\bibitem{Joyal}
A.~Joyal, ``Une th{\'e}orie combinatoire des s{\'e}ries formelles,'' {\em
  Advances in Mathematics}, vol.~42, no.~1, pp.~1 -- 82, 1981.

\bibitem{STOVER1993289}
C.~R. Stover, ``{The equivalence of certain categories of twisted Lie and Hopf
  algebras over a commutative ring},'' {\em Journal of Pure and Applied
  Algebra}, vol.~86, no.~3, pp.~289 -- 326, 1993.

\bibitem{EC1}
R.~P. Stanley, {\em Enumerative Combinatorics: Volume 1}.
\newblock New York, NY, USA: Cambridge University Press, 2nd~ed., 2011.

\bibitem{Doubilet}
P.~Doubilet, ``{On the Foundations of Combinatorial Theory. VII: Symmetric
  Functions through the Theory of Distribution and Occupancy},'' {\em Studies
  in Applied Mathematics}, vol.~51, no.~4, pp.~377--396.

\bibitem{BBM}
C.~Benedetti, N.~Bergeron, and J.~Machacek, ``Hypergraphic polytopes:
  combinatorial properties and antipode,'' {\em Journal of Combinatorics},
  vol.~10, 12 2017.

\end{thebibliography}
\bibliographystyle{hieeetr.bst}

\end{document}